\newtheorem{theorem}{Theorem}
\newtheorem{lemma}[theorem]{Lemma}
\newtheorem{proposition}[theorem]{Proposition}
\newtheorem{definition}[theorem]{Definition}
\newtheorem{corollary}[theorem]{Corollary}
\newcommand{\orb}{\mathrm{Orb}}
\newcommand{\af}{\alpha}
\newcommand{\bt}{\beta}
\begin{document}
\noindent{\Large
The algebraic classification of nilpotent commutative $\mathfrak{CD}$-algebras}

   \medskip
\ 

\ 
   \medskip

   {\bf  Doston Jumaniyozov,
   Ivan   Kaygorodov \&
   Abror Khudoyberdiyev}

\

\

\

\noindent{\bf Abstract}:
{\it 
An algebraic classification of complex $5$-dimensional nilpotent commutative $\mathfrak{CD}$-algebras is given.
This classification is based on an algebraic classification of complex $5$-dimensional  nilpotent Jordan algebras.}

\

\ 

\

\noindent {\bf Keywords}:
{\it Jordan algebras, commutative $\mathfrak{CD}$-algebras, nilpotent algebras, algebraic classification, central extension.}

\medskip

\noindent {\bf MSC2020}: 17A15, 17C55.

\medskip

\medskip

\medskip

\medskip

\section*{Introduction}
There are many results related to the algebraic and geometric 
classification
of low-dimensional algebras in the varieties of Jordan, Lie, Leibniz and 
Zinbiel algebras;
for algebraic classifications  see, for example, 
\cite{ack, cfk19, ckkk20, kks19, degr3, ck13, usefi1, degr2, degr1, hac18, ikm19,  kkk18, kpv19, kv16}; 
for geometric classifications and descriptions of degenerations see, for example, 
\cite{pilar}.
In the present paper, we give an algebraic classification of nilpotent commutative $\mathfrak{CD}$-algebras.
 This is a new class of non-associative algebras introduced in \cite{ack}.
The idea of the definition of a $\mathfrak{CD}$-algebra comes from the following property of Jordan and Lie algebras: {\it the commutator of any pair of multiplication operators is a derivation}.
 This gives three identities of degree four,  which reduce to only one identity of degree four in the commutative or anticommutative case.
Commutative and anticommutative  $\mathfrak{CD}$-algebras are related to many interesting varieties of algebras.
 Thus, anticommutative  $\mathfrak{CD}$-algebras is a generalization of Lie algebras, 
 containing the intersection of Malcev and Sagle algebras as a proper subvariety.   Moreover, the following intersections of varieties coincide:
Malcev and Sagle algebras; 
Malcev and anticommutative  $\mathfrak{CD}$-algebras; 
Sagle and anticommutative  $\mathfrak{CD}$-algebras.
On the other hand, 
the variety of anticommutative  $\mathfrak{CD}$-algebras is a proper subvariety   of 
the varieties of binary Lie algebras 
and almost Lie algebras \cite{kz}.
The variety of anticommutative  $\mathfrak{CD}$-algebras coincides with the intersection of the varieties of binary Lie algebras and almost Lie algebras.
Commutative  $\mathfrak{CD}$-algebras is a generalization of Jordan algebras, 
which is a generalization of associative commutative algebras.
On the other hand, the variety of commutative  $\mathfrak{CD}$-algebras is also known as the variety of almost-Jordan algebras, which states in the bigger variety of generalized almost-Jordan algebras \cite{arenas,hl,fl15,labra}.
 The $n$-ary  version of commutative  $\mathfrak{CD}$-algebras was introduced in a recent paper by 
Kaygorodov, Pozhidaev and Saraiva \cite{kps19}. 

\newpage
The variety of almost-Jordan algebras is the variety of commutative algebras, 
satisfying \[2((yx)x)x+yx^3=3(yx^2)x.\] 
This present identity appeared in a paper of Osborn \cite{os65},
during the study of identities of degree less than or equal to $4$ of non-associative algebras. The identity is a linearized form of the Jordan identity.
The systematic study of almost-Jordan algebras was initiated in the next paper of Osborn \cite{osborn65} and it was continued in some papers of Petersson \cite{petersson, petersson67}, Osborn \cite{osborn69}, and Sidorov \cite{Sidorov_1981}
(sometimes, it was  called as Lie triple algebras).
Hentzel and  Peresi proved that every semiprime almost-Jordan ring is Jordan \cite{peresi}.
After that, 
Labra and Correa
proved  that a finite-dimensional almost-Jordan right-nilalgebra is nilpotent \cite{cl09,cl09-2}.
Assosymmetric algebras under the symmetric product give almost-Jordan algebras  \cite{askar18}.

The algebraic classification of nilpotent algebras will be achieved by the calculation of central extensions of algebras from the same variety which have a smaller dimension.
Central extensions of algebras from various varieties were studied, for example, in \cite{ss78,ck13,klp20,zusmanovich,kkl18,omirov}.
Skjelbred and Sund \cite{ss78} used central extensions of Lie algebras to classify nilpotent Lie algebras.
Using the same method,  
all non-Lie central extensions of  all $4$-dimensional Malcev algebras \cite{hac16},
all non-associative central extensions of all $3$-dimensional Jordan algebras \cite{ha17},
all anticommutative central extensions of $3$-dimensional anticommutative algebras \cite{cfk182},
all central extensions of $2$-dimensional algebras \cite{cfk18}
and some others were described.
One can also look at the classification of
$3$-dimensional nilpotent algebras \cite{fkkv},
$4$-dimensional nilpotent associative algebras \cite{degr1},
$4$-dimensional nilpotent Novikov algebras \cite{kkk18},
$4$-dimensional nilpotent bicommutative algebras \cite{kpv19},
$4$-dimensional nilpotent commutative algebras in \cite{fkkv},
$4$-dimensional nilpotent $\mathfrak{CD}$-algebras in \cite{kk20},
$5$-dimensional nilpotent restricted Lie agebras \cite{usefi1},
$5$-dimensional nilpotent Jordan algebras \cite{ha16},
$5$-dimensional nilpotent anticommutative algebras \cite{fkkv},
$6$-dimensional nilpotent Lie algebras \cite{degr3, degr2},
$6$-dimensional nilpotent Malcev algebras \cite{hac18},
$6$-dimensional nilpotent Tortkara algebras \cite{gkk},
$6$-dimensional nilpotent binary Lie algebras \cite{ack},
$6$-dimensional nilpotent anticommutative $\mathfrak{CD}$-algebras \cite{ack},
$8$-dimensional dual mock-Lie algebras \cite{lisa}.

\paragraph{\bf Motivation and contextualization} 
Given algebras ${\bf A}$ and ${\bf B}$ in the same variety, we write ${\bf A}\to {\bf B}$ and say that ${\bf A}$ {\it degenerates} to ${\bf B}$, or that ${\bf A}$ is a {\it deformation} of ${\bf B}$, if ${\bf B}$ is in the Zariski closure of the orbit of ${\bf A}$ (under the base-change action of the general linear group). The study of degenerations of algebras is very rich and closely related to deformation theory, in the sense of Gerstenhaber. It offers an insightful geometric perspective on the subject and has been the object of a lot of research.
In particular, there are many results concerning degenerations of algebras of small dimensions in a  variety defined by a set of identities.
One of the main problems of the {\it geometric classification} of a variety of algebras is a description of its irreducible components. In the case of finitely-many orbits (i.e., isomorphism classes), the irreducible components are determined by the rigid algebras --- algebras whose orbit closure is an irreducible component of the variety under consideration. 
The algebraic classification of complex  $5$-dimensional nilpotent commutative $\mathfrak{CD}$-algebras gives  a way to obtain a geometric classification of  all
complex $5$-dimensional nilpotent commutative $\mathfrak{CD}$-algebras, 
as well as an algebraic classification of all complex $5$-dimensional nilpotent commutative algebras.

\ 

\paragraph{\bf Main result}
The main result of the paper is the complete classification of complex $5$-dimensional nilpotent commutative $\mathfrak{CD}$-algebras.
The full list of non-isomorphic algebras consists two parts:
\begin{enumerate}
    \item Jordan algebras were classified in \cite{ha16};
    
    \item Non-Jordan commutative $\mathfrak{CD}$-algebras were found in the last part of the present paper [Theorem \ref{teor}].
    
 \end{enumerate}

 \newpage
\section{Preliminaries}
The class of  $\mathfrak{CD}$-algebras is defined by the 
property that the commutator of any pair of multiplication operators is a derivation; 
namely, an algebra $\mathfrak{A}$ is a $\mathfrak{CD}$-algebra if and only if  
\[ [T_x,T_y]   \in \mathfrak{Der} (\mathfrak{A}),\]
for all $x,y \in \mathfrak{A}$, where  $T_z \in \{ R_z,L_z\}$. Here we use the notation $R_z$ (resp. $L_z$) for the operator of right (resp. left) multiplication in $\mathfrak{A}$. We will denote the variety of commutative $\mathfrak{CD}$-algebras by $\mathfrak{CCD}$.
In terms of identities, the class of $\mathfrak{CCD}$-algebras is defined by the following identity:
\[((xy)a)b  + ((xb)a)y + x((yb)a)  = ((xy)b)a +  ((xa)b)y+ x((ya)b).\]

Let us define $ G_{x}: \mathfrak{C}\times \mathfrak{C}\times \mathfrak{C}\rightarrow \mathfrak{C} $ by the following way:
\[G_{x}(y,z,t)=(yz,x,t)+(yt,x,z)+(zt,x,y),\]
where $ (x,y,z)=(xy)z - x(yz) $ is an \textit{associator}. It is easy to see that, $ G_x $ is $ 3 $-linear function and symmetric in every two variables.  
The following proposition was proved in \cite{He}:

\begin{proposition} Let $\mathfrak{C}$ be a commutative algebra. Then $ \mathfrak{C} $ is an almost-Jordan algebra if and only if
	\begin{equation}\label{GG}
	G_y(x,z,t)=G_x(y,z,t)
	\end{equation}
for all $ x,y,z,t\in\mathfrak{C}.$	
\end{proposition}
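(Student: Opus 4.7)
The plan is to prove each implication by relating the identity $G_y(x,z,t)=G_x(y,z,t)$ to the Osborn identity $2((yx)x)x + yx^3 = 3(yx^2)x$ that defines almost-Jordan algebras.

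For the direction $(\Leftarrow)$, I would specialize the hypothesis at $z=t=x$. A short direct computation gives
\[
G_y(x,x,x) = 3(x^2,y,x) = 3(yx^2)x - 3x^2(xy),
\]
\[
G_x(y,x,x) = 2(yx,x,x) + (x^2,x,y) = 2((yx)x)x - 2(yx)x^2 + yx^3 - x^2(xy),
\]
where only commutativity is used (for instance $x^2y = yx^2$ and $yx = xy$). Since $(yx)x^2 = x^2(xy)$ by commutativity, equating $G_y(x,x,x) = G_x(y,x,x)$ collapses exactly to $3(yx^2)x = 2((yx)x)x + yx^3$, so $\mathfrak{C}$ is almost-Jordan.

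For $(\Rightarrow)$, I would fully linearize the Osborn identity in $x$ (valid in characteristic zero) via $x \mapsto \lambda_1 x + \lambda_2 z + \lambda_3 t$, extracting the coefficient of $\lambda_1 \lambda_2 \lambda_3$; this produces a multilinear identity in $(y,x,z,t)$. On the other side, directly expanding $G_y(x,z,t) - G_x(y,z,t)$ in a commutative algebra shows that the six ``two-product'' terms of the form $(ab)(cd)$ arising from the associators cancel in pairs by commutativity, and the $G$-identity reduces to
\[
((xz)y)t + ((xt)y)z + ((zt)y)x = ((yz)x)t + ((yt)x)z + ((zt)x)y.
\]
A clean packaging of the equivalence is to introduce the multilinear function $M(m;a,b,c) := ((ab)m)c + ((ac)m)b + ((bc)m)a$, symmetric in $a,b,c$. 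Then the $\mathfrak{CCD}$-identity recorded in the preliminaries reads $M(a;x,y,b) = M(b;x,y,a)$ (after rewriting $x((yb)a)=((yb)a)x$ by commutativity), while the reduced $G$-identity reads $M(y;x,z,t) = M(x;y,z,t)$. Each of these asserts the invariance of $M$ under transposing its distinguished ``middle'' argument with one of the remaining three, and by relabeling variables each identity generates the full $S_4$-symmetry of $M$ in four variables, so the two are equivalent.

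The main obstacle is the explicit matching in the $(\Rightarrow)$ step: verifying, term by term, that the linearized Osborn identity coincides (modulo commutativity) with the six-term reduced $G$-identity. This is a finite, purely mechanical check, but requires careful bookkeeping of signs and of which associators contribute which monomials after the $(ab)(cd)$-type cancellations. No additional algebraic machinery beyond commutativity and polarization is needed.
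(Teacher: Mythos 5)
The paper itself offers no proof of this proposition --- it is quoted as a result of Guzzo and Labra \cite{He} --- so your argument has to stand on its own. Your $(\Leftarrow)$ direction does: specializing $z=t=x$, the expansions of $G_y(x,x,x)$ and $G_x(y,x,x)$ you give are correct, the single cancellation $(yx)x^2=x^2(xy)$ is the only use of commutativity needed, and the identity collapses exactly to $2((yx)x)x+yx^3=3(yx^2)x$. Likewise, your reduction of the $G$-identity to the six-term identity $M(y;x,z,t)=M(x;y,z,t)$, with $M(m;a,b,c)=((ab)m)c+((ac)m)b+((bc)m)a$, is correct: the six terms of type $(ab)(cd)$ do cancel in pairs by commutativity.

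The gap is in $(\Rightarrow)$, precisely at the step you flag as unverified: the full linearization of Osborn's identity in $x$ does \emph{not} coincide term by term with the reduced $G$-identity, so the ``finite mechanical check'' you describe would fail as stated. Extracting the coefficient of $\lambda_1\lambda_2\lambda_3$ from $x\mapsto \lambda_1x+\lambda_2z+\lambda_3t$ and regrouping (each of $M(x;y,z,t)$, $M(z;y,x,t)$, $M(t;y,x,z)$ absorbs two of the six left-normed terms $((y\,\cdot)\,\cdot)\,\cdot$ together with one of the three terms $y((\cdot\,\cdot)\,\cdot)$), what the linearized Osborn identity actually says is
\[ M(x;y,z,t)+M(z;y,x,t)+M(t;y,x,z)=3\,M(y;x,z,t), \]
a symmetrized relation, not the transposition identity $M(x;y,z,t)=M(y;x,z,t)$. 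The repair is short but must be supplied: write this relation a second time with the roles of $x$ and $y$ exchanged and subtract; the $M(z;\cdots)$ and $M(t;\cdots)$ terms cancel (using the symmetry of $M$ in its last three slots) and one is left with $4\bigl(M(x;y,z,t)-M(y;x,z,t)\bigr)=0$, which over $\mathbb{C}$ gives the $G$-identity. With that subtraction step added --- and the routine remark that passing to the multilinear component of the polarization is legitimate in characteristic zero --- your proof is complete. Your closing observation identifying the reduced $G$-identity with the $\mathfrak{CCD}$-identity via the $S_4$-symmetry of $M$ is correct, but it concerns the corollary that follows the proposition rather than the proposition itself.
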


Now, let us set $ x=a,$  $ y=b,$ $ z=x$ and $ t=y $ in (\ref{GG}), then we have 
$G_{a}(b,x,y)=G_{b}(a,x,y)$. Therefore, by the definition of $ G_{x} $ we obtain the following identity:
\[(bx,a,y)+(by,a,x)+(xy,a,b)=(ax,b,y)+(ay,b,x)+(xy,b,a).\]

Then developing the associator we have 
$$((bx)a)y-(bx)(ay)+((by)a)x-(by)(ax)+((xy)a)b-(xy)(ab)=$$
$$((ax)b)y-(ax)(by)+((ay)b)x-(ay)(bx)+((xy)b)a-(xy)(ba).$$

Moreover, if $ \mathfrak{C} $ is a commutative algebra, then we obtain the following relation:
\begin{equation}
((xy)a)b+((xb)a)y+x((yb)a)=((xy)b)a+((xa)b)y+x((ya)b).
\end{equation}

This is the identity of what we call the $ \mathfrak{CCD} $-algebra.

\begin{corollary}
Let $ \mathfrak{C} $ be a commutative algebra. Then $  \mathfrak{C} $ is an almost-Jordan algebra if and only if $  \mathfrak{C} $ is a $ \mathfrak{CCD} $-algebra. \end{corollary}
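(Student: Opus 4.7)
The plan is to prove the two directions of the biconditional, leveraging the preceding Proposition and the derivation already sketched in the paragraph above the Corollary.

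For the forward direction (almost-Jordan $\Rightarrow$ $\mathfrak{CCD}$), I would simply assemble the chain of implications laid out in the paragraph preceding the Corollary: invoke the Proposition to obtain the total symmetry $G_y(x,z,t)=G_x(y,z,t)$; specialize with $x=a$, $y=b$, $z=x$, $t=y$ to obtain $G_a(b,x,y)=G_b(a,x,y)$; expand each $G$ in terms of associators; and finally collapse the resulting identity using commutativity. The six pairwise-product terms arising from expanding the associators, namely $(bx)(ay),(by)(ax),(xy)(ab)$ on the left and $(ax)(by),(ay)(bx),(xy)(ba)$ on the right, match up in three commutativity-equivalent pairs and therefore cancel, leaving the $\mathfrak{CCD}$-identity.

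For the reverse direction ($\mathfrak{CCD}$ $\Rightarrow$ almost-Jordan), I would retrace exactly the same chain in reverse. Starting from the $\mathfrak{CCD}$-identity, I would add back the six pairwise-product terms above in matched pairs equal by commutativity, which reconstitutes the associator identity
\[(bx,a,y)+(by,a,x)+(xy,a,b)=(ax,b,y)+(ay,b,x)+(xy,b,a).\]
This is precisely $G_a(b,x,y)=G_b(a,x,y)$. Since the $\mathfrak{CCD}$-identity holds for all $a,b,x,y\in\mathfrak{C}$, so does this associator identity, and the relabeling $a\mapsto y,\ b\mapsto x,\ x\mapsto z,\ y\mapsto t$ yields the full symmetry $G_y(x,z,t)=G_x(y,z,t)$ for all $x,y,z,t$. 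The Proposition then delivers the almost-Jordan identity.

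The argument has no genuine obstacles: every step is either an application of the Proposition, the definition of $G_x$, elementary expansion of associators, or a use of commutativity. The only subtlety worth noting is that the step in which commutativity is used to cancel pairwise products must be a true equivalence rather than merely an implication; this is transparent because each cancellation matches a term on one side with its commutativity-equivalent image on the other, so the introduction and removal of these terms is reversible. Thus the chain of manipulations in the paragraph preceding the Corollary is bidirectional, and both implications follow at once.
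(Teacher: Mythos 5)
Your proposal is correct and follows exactly the route the paper takes: the Corollary is presented as an immediate consequence of the preceding Proposition together with the displayed chain of substitutions, associator expansions, and commutativity cancellations, and your only addition is to make explicit that every step in that chain is reversible (which is indeed transparent, since the cancelled pairwise products match in commutativity-equivalent pairs and the relabeling of variables is a bijection). Nothing further is needed.
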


 \subsection{Method of classification of nilpotent algebras}

Throughout this paper, we use the notations and methods well written in \cite{ha17,hac16,cfk18},
which we have adapted for the $\mathfrak{CCD}$ case with some modifications.
Further in this section we give some important definitions.

Let $({\bf A}, \cdot)$ be complex  $ \mathfrak{CCD} $-algebra
and $\mathbb V$ be a complex  vector space. The $\mathbb C$-linear space ${\rm Z^{2}}\left(
\bf A,\mathbb V \right) $ is defined as the set of all bilinear maps $\theta  \colon {\bf A} \times {\bf A} \longrightarrow {\mathbb V}$ such that
\[ \theta(x,y)=\theta(y,x) \]
\[ \theta((xy)a,b)+\theta((xb)a,y)+\theta(x,(yb)a)=\theta((xy)b,a)+\theta((xa)b,y)+\theta(x,(ya)b) \]

These elements will be called {\it cocycles}. For a
linear map $f$ from $\bf A$ to  $\mathbb V$, if we define $\delta f\colon {\bf A} \times
{\bf A} \longrightarrow {\mathbb V}$ by $\delta f  (x,y ) =f(xy )$, then one can check that $\delta f\in {\rm Z^{2}}\left( {\bf A},{\mathbb V} \right) $. We define ${\rm B^{2}}\left({\bf A},{\mathbb V}\right) =\left\{ \theta =\delta f\ : f\in {\rm Hom}\left( {\bf A},{\mathbb V}\right) \right\} $.
We define the {\it second cohomology space} ${\rm H^{2}}\left( {\bf A},{\mathbb V}\right) $ as the quotient space ${\rm Z^{2}}
\left( {\bf A},{\mathbb V}\right) \big/{\rm B^{2}}\left( {\bf A},{\mathbb V}\right) $.

\

Let $\operatorname{Aut}({\bf A}) $ be the automorphism group of  ${\bf A} $ and let $\phi \in \operatorname{Aut}({\bf A})$. For $\theta \in
{\rm Z^{2}}\left( {\bf A},{\mathbb V}\right) $ define  the action of the group $\operatorname{Aut}({\bf A}) $ on ${\rm Z^{2}}\left( {\bf A},{\mathbb V}\right) $ by $\phi \theta (x,y)
=\theta \left( \phi \left( x\right) ,\phi \left( y\right) \right) $.  It is easy to verify that
 ${\rm B^{2}}\left( {\bf A},{\mathbb V}\right) $ is invariant under the action of $\operatorname{Aut}({\bf A}).$
 So, we have an induced action of  $\operatorname{Aut}({\bf A})$  on ${\rm H^{2}}\left( {\bf A},{\mathbb V}\right)$.

\

Let $\bf A$ be a $ \mathfrak{CCD} $-algebra of dimension $m$ over  $\mathbb C$ and ${\mathbb V}$ be a $\mathbb C$-vector
space of dimension $k$. For the bilinear map $\theta$, define on the linear space ${\bf A}_{\theta } = {\bf A}\oplus {\mathbb V}$ the
bilinear product `` $\left[ -,-\right] _{{\bf A}_{\theta }}$'' by $\left[ x+x^{\prime },y+y^{\prime }\right] _{{\bf A}_{\theta }}=
 xy +\theta(x,y) $ for all $x,y\in {\bf A},x^{\prime },y^{\prime }\in {\mathbb V}$.
The algebra ${\bf A}_{\theta }$ is called a $k$-{\it dimensional central extension} of ${\bf A}$ by ${\mathbb V}$. One can easily check that ${\bf A_{\theta}}$ is a $ \mathfrak{CCD} $-algebra if and only if $\theta \in {\rm Z^2}({\bf A}, {\mathbb V})$.

Call the
set $\operatorname{Ann}(\theta)=\left\{ x\in {\bf A}:\theta \left( x, {\bf A} \right)=0\right\} $
the {\it annihilator} of $\theta $. We recall that the {\it annihilator} of an  algebra ${\bf A}$ is defined as
the ideal $\operatorname{Ann}(  {\bf A} ) =\left\{ x\in {\bf A}:  x{\bf A}=0\right\}$. Observe
 that
$\operatorname{Ann}\left( {\bf A}_{\theta }\right) =(\operatorname{Ann}(\theta) \cap\operatorname{Ann}({\bf A}))
 \oplus {\mathbb V}$.

\

The following result shows that every algebra with a non-zero annihilator is a central extension of a smaller-dimensional algebra.

\begin{lemma}
Let ${\bf A}$ be an $n$-dimensional $ \mathfrak{CCD} $-algebra such that $\dim (\operatorname{Ann}({\bf A}))=m\neq0$. Then there exists, up to isomorphism, a unique $(n-m)$-dimensional $ \mathfrak{CCD} $-algebra ${\bf A}'$ and a bilinear map $\theta \in {\rm Z^2}({\bf A'}, {\mathbb V})$ with $\operatorname{Ann}({\bf A'})\cap\operatorname{Ann}(\theta)=0$, where $\mathbb V$ is a vector space of dimension m, such that ${\bf A} \cong {{\bf A}'}_{\theta}$ and
 ${\bf A}/\operatorname{Ann}({\bf A})\cong {\bf A}'$.
\end{lemma}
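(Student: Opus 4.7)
The plan is to take ${\bf A}' := {\bf A}/\operatorname{Ann}({\bf A})$ and to recover ${\bf A}$ from ${\bf A}'$ via an explicit linear section, producing the cocycle $\theta$ as the obstruction to the section being a homomorphism; this is the standard Skjelbred--Sund construction adapted to the $\mathfrak{CCD}$ setting. Since $\operatorname{Ann}({\bf A})$ is a two-sided ideal (two-sided because ${\bf A}$ is commutative), the quotient ${\bf A}' := {\bf A}/\operatorname{Ann}({\bf A})$ is a commutative algebra of dimension $n-m$, and because the defining $\mathfrak{CCD}$-identity is multilinear it descends to the quotient, so ${\bf A}'$ is again a $\mathfrak{CCD}$-algebra. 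I would fix a vector space $\mathbb V$ of dimension $m$ together with a linear isomorphism $\iota\colon \operatorname{Ann}({\bf A}) \to \mathbb V$, and choose a linear section $s\colon {\bf A}' \to {\bf A}$ of the canonical projection $\pi\colon {\bf A} \to {\bf A}'$, so that ${\bf A} = s({\bf A}') \oplus \operatorname{Ann}({\bf A})$ as a vector space.

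Next, define $\theta\colon {\bf A}'\times {\bf A}' \to \mathbb V$ by $\theta(x',y') := \iota\bigl(s(x')s(y') - s(x'y')\bigr)$; since $\pi$ kills this combination, the argument of $\iota$ really does lie in $\operatorname{Ann}({\bf A})$, and symmetry of $\theta$ follows from commutativity of ${\bf A}$. To verify the cocycle identity I would substitute $s(x), s(y), s(a), s(b)$ into the $\mathfrak{CCD}$-identity of ${\bf A}$ and rewrite each elementary product $s(u)s(v)$ as $s(uv) + \iota^{-1}(\theta(u,v))$. Using that $\operatorname{Ann}({\bf A})$ annihilates every further multiplication, the ``algebra'' terms line up on the two sides because the $\mathfrak{CCD}$-identity already holds in ${\bf A}'$, and what remains is exactly the six-term identity required of $\theta$. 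Hence $\theta \in {\rm Z}^2({\bf A}', \mathbb V)$.

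The map $\Phi\colon {\bf A}'_\theta \to {\bf A}$ sending $x' + v$ to $s(x') + \iota^{-1}(v)$ is a linear bijection, and by construction of $\theta$ it is an algebra homomorphism, so ${\bf A} \cong {\bf A}'_\theta$. For the condition $\operatorname{Ann}({\bf A}') \cap \operatorname{Ann}(\theta) = 0$, if $x'$ lies in this intersection then $s(x')s(y') = s(x'y') + \iota^{-1}(\theta(x',y')) = 0$ for every $y'$ and $s(x')$ also annihilates $\operatorname{Ann}({\bf A})$, so $s(x') \in \operatorname{Ann}({\bf A})$ and $x' = \pi(s(x')) = 0$. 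Uniqueness of ${\bf A}'$ up to isomorphism is forced by the formula $\operatorname{Ann}({\bf A}'_\theta) = (\operatorname{Ann}({\bf A}')\cap\operatorname{Ann}(\theta))\oplus\mathbb V$ noted just above the lemma: any alternative ${\bf A}'_1$ with ${\bf A}\cong ({\bf A}'_1)_{\theta_1}$ and $\operatorname{Ann}({\bf A}'_1)\cap\operatorname{Ann}(\theta_1)=0$ must satisfy ${\bf A}'_1 \cong {\bf A}/\operatorname{Ann}({\bf A})$.

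The main obstacle is the cocycle verification in the second step. One has to track carefully how the substitution $s(u)s(v) = s(uv)+\iota^{-1}(\theta(u,v))$ interacts with the outer multiplications in the six-term $\mathfrak{CCD}$-identity: corrections arriving at the outermost level vanish because they get multiplied by something, but iterated products such as $s((xy)a)\,s(b)$ themselves need not lie in $\operatorname{Ann}({\bf A})$, so the accounting is subtler than ``every $\theta$-term vanishes''. Once the matching is carried out, the residue is precisely the $\mathfrak{CCD}$-identity already holding in ${\bf A}'$, transported back through $s$.
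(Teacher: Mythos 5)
Your proposal is correct and follows essentially the same route as the paper: choose a linear complement/section of $\operatorname{Ann}({\bf A})$, define $\theta$ as the obstruction to that section being a homomorphism, and identify ${\bf A}$ with ${\bf A}'_\theta$. The only difference is that you verify the cocycle identity by direct substitution, whereas the paper sidesteps this by first establishing ${\bf A}'_\theta\cong{\bf A}$ and then invoking the already-stated equivalence that ${\bf A}'_\theta$ is a $\mathfrak{CCD}$-algebra if and only if $\theta\in{\rm Z}^2({\bf A}',\mathbb V)$.
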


\begin{proof}
Let ${\bf A}'$ be a linear complement of $\operatorname{Ann}({\bf A})$ in ${\bf A}$. Define a linear map $P \colon {\bf A} \longrightarrow {\bf A}'$ by $P(x+v)=x$ for $x\in {\bf A}'$ and $v\in\operatorname{Ann}({\bf A})$, and define a multiplication on ${\bf A}'$ by $[x, y]_{{\bf A}'}=P(x y)$ for $x, y \in {\bf A}'$.
For $x, y \in {\bf A}$, we have
\[P(xy)=P((x-P(x)+P(x))(y- P(y)+P(y)))=P(P(x) P(y))=[P(x), P(y)]_{{\bf A}'}. \]

Since $P$ is a homomorphism $P({\bf A})={\bf A}'$ is a $ \mathfrak{CCD} $-algebra and
 ${\bf A}/\operatorname{Ann}({\bf A})\cong {\bf A}'$, which gives us the uniqueness. Now, define the map $\theta \colon {\bf A}' \times {\bf A}' \longrightarrow\operatorname{Ann}({\bf A})$ by $\theta(x,y)=xy- [x,y]_{{\bf A}'}$.
  Thus, ${\bf A}'_{\theta}$ is ${\bf A}$ and therefore $\theta \in {\rm Z^2}({\bf A'}, {\mathbb V})$ and $\operatorname{Ann}({\bf A'})\cap\operatorname{Ann}(\theta)=0$.
\end{proof}

\

\begin{definition}
Let ${\bf A}$ be an algebra and $I$ be a subspace of $\operatorname{Ann}({\bf A})$. If ${\bf A}={\bf A}_0 \oplus I$
then $I$ is called an {\it annihilator component} of ${\bf A}$.
A central extension of an algebra $\bf A$ without annihilator component is called a {\it non-split central extension}.
\end{definition}

\ 

Our task is to find all central extensions of an algebra $\bf A$ by
a space ${\mathbb V}$.  In order to solve the isomorphism problem we need to study the
action of $\operatorname{Aut}({\bf A})$ on ${\rm H^{2}}\left( {\bf A},{\mathbb V}
\right) $. To do that, let us fix a basis $e_{1},\ldots ,e_{s}$ of ${\mathbb V}$, and $
\theta \in {\rm Z^{2}}\left( {\bf A},{\mathbb V}\right) $. Then $\theta $ can be uniquely
written as $\theta \left( x,y\right) =
\displaystyle \sum_{i=1}^{s} \theta _{i}\left( x,y\right) e_{i}$, where $\theta _{i}\in
{\rm Z^{2}}\left( {\bf A},\mathbb C\right) $. Moreover, $\operatorname{Ann}(\theta)=\operatorname{Ann}(\theta _{1})\cap\operatorname{Ann}(\theta _{2})\cap\ldots \cap\operatorname{Ann}(\theta _{s})$. Furthermore, $\theta \in
{\rm B^{2}}\left( {\bf A},{\mathbb V}\right) $\ if and only if all $\theta _{i}\in {\rm B^{2}}\left( {\bf A},
\mathbb C\right) $.
It is not difficult to prove (see \cite[Lemma 13]{hac16}) that given a $\mathfrak{CCD}$-algebra ${\bf A}_{\theta}$, if we write as
above $\theta \left( x,y\right) = \displaystyle \sum_{i=1}^{s} \theta_{i}\left( x,y\right) e_{i}\in {\rm Z^{2}}\left( {\bf A},{\mathbb V}\right) $ and
$\operatorname{Ann}(\theta)\cap \operatorname{Ann}\left( {\bf A}\right) =0$, then ${\bf A}_{\theta }$ has an
annihilator component if and only if $\left[ \theta _{1}\right] ,\left[
\theta _{2}\right] ,\ldots ,\left[ \theta _{s}\right] $ are linearly
dependent in ${\rm H^{2}}\left( {\bf A},\mathbb C\right) $.

Let ${\mathbb V}$ be a finite-dimensional vector space over $\mathbb C$. The {\it Grassmannian} $G_{k}\left( {\mathbb V}\right) $ is the set of all $k$-dimensional
linear subspaces of $ {\mathbb V}$. Let $G_{s}\left( {\rm H^{2}}\left( {\bf A},\mathbb C\right) \right) $ be the Grassmannian of subspaces of dimension $s$ in
${\rm H^{2}}\left( {\bf A},\mathbb C\right) $. There is a natural action of $\operatorname{Aut}({\bf A})$ on $G_{s}\left( {\rm H^{2}}\left( {\bf A},\mathbb C\right) \right) $.
Let $\phi \in \operatorname{Aut}({\bf A})$. For $W=\left\langle
\left[ \theta _{1}\right] ,\left[ \theta _{2}\right] ,\dots,\left[ \theta _{s}
\right] \right\rangle \in G_{s}\left( {\rm H^{2}}\left( {\bf A},\mathbb C
\right) \right) $ define $\phi W=\left\langle \left[ \phi \theta _{1}\right]
,\left[ \phi \theta _{2}\right] ,\dots,\left[ \phi \theta _{s}\right]
\right\rangle $. We denote the orbit of $W\in G_{s}\left(
{\rm H^{2}}\left( {\bf A},\mathbb C\right) \right) $ under the action of $\operatorname{Aut}({\bf A})$ by $\operatorname{Orb}(W)$. Given
\[
W_{1}=\left\langle \left[ \theta _{1}\right] ,\left[ \theta _{2}\right] ,\dots,
\left[ \theta _{s}\right] \right\rangle ,W_{2}=\left\langle \left[ \vartheta
_{1}\right] ,\left[ \vartheta _{2}\right] ,\dots,\left[ \vartheta _{s}\right]
\right\rangle \in G_{s}\left( {\rm H^{2}}\left( {\bf A},\mathbb C\right)
\right),
\]
we easily have that if $W_{1}=W_{2}$, then $ \bigcap\limits_{i=1}^{s}\operatorname{Ann}(\theta _{i})\cap \operatorname{Ann}\left( {\bf A}\right) = \bigcap\limits_{i=1}^{s}
\operatorname{Ann}(\vartheta _{i})\cap\operatorname{Ann}( {\bf A}) $, and therefore we can introduce
the set
\[
{\bf T}_{s}({\bf A}) =\left\{ W=\left\langle \left[ \theta _{1}\right] ,
\left[ \theta _{2}\right] ,\dots,\left[ \theta _{s}\right] \right\rangle \in
G_{s}\left( {\rm H^{2}}\left( {\bf A},\mathbb C\right) \right) : \bigcap\limits_{i=1}^{s}\operatorname{Ann}(\theta _{i})\cap\operatorname{Ann}({\bf A}) =0\right\},
\]
which is stable under the action of $\operatorname{Aut}({\bf A})$.

\

Now, let ${\mathbb V}$ be an $s$-dimensional linear space and let us denote by
${\bf E}\left( {\bf A},{\mathbb V}\right) $ the set of all {\it non-split $s$-dimensional central extensions} of ${\bf A}$ by
${\mathbb V}$. By above, we can write
\[
{\bf E}\left( {\bf A},{\mathbb V}\right) =\left\{ {\bf A}_{\theta }:\theta \left( x,y\right) = \sum_{i=1}^{s}\theta _{i}\left( x,y\right) e_{i} \ \ \text{and} \ \ \left\langle \left[ \theta _{1}\right] ,\left[ \theta _{2}\right] ,\dots,
\left[ \theta _{s}\right] \right\rangle \in {\bf T}_{s}({\bf A}) \right\} .
\]
We also have the following result, which can be proved as in \cite[Lemma 17]{hac16}.

\begin{lemma}
 Let ${\bf A}_{\theta },{\bf A}_{\vartheta }\in {\bf E}\left( {\bf A},{\mathbb V}\right) $. Suppose that $\theta \left( x,y\right) =  \displaystyle \sum_{i=1}^{s}
\theta _{i}\left( x,y\right) e_{i}$ and $\vartheta \left( x,y\right) =
\displaystyle \sum_{i=1}^{s} \vartheta _{i}\left( x,y\right) e_{i}$.
Then the $ \mathfrak{CCD} $-algebras ${\bf A}_{\theta }$ and ${\bf A}_{\vartheta } $ are isomorphic
if and only if
$$\operatorname{Orb}\left\langle \left[ \theta _{1}\right] ,
\left[ \theta _{2}\right] ,\dots,\left[ \theta _{s}\right] \right\rangle =
\operatorname{Orb}\left\langle \left[ \vartheta _{1}\right] ,\left[ \vartheta
_{2}\right] ,\dots,\left[ \vartheta _{s}\right] \right\rangle .$$
\end{lemma}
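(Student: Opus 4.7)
The plan is to adapt the Skjelbred--Sund argument to the $\mathfrak{CCD}$ setting, following the template of \cite[Lemma 17]{hac16}. The crucial preliminary observation is that for any $\mathbf{A}_\theta \in \mathbf{E}(\mathbf{A},\mathbb{V})$, the formula $\operatorname{Ann}(\mathbf{A}_\theta) = (\operatorname{Ann}(\theta)\cap\operatorname{Ann}(\mathbf{A}))\oplus\mathbb{V}$ recorded earlier, together with the defining property $\operatorname{Ann}(\theta)\cap\operatorname{Ann}(\mathbf{A})=0$ for elements of $\mathbf{E}(\mathbf{A},\mathbb{V})$, forces $\operatorname{Ann}(\mathbf{A}_\theta) = \mathbb{V}$, and likewise for $\mathbf{A}_\vartheta$. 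This pins $\mathbb{V}$ canonically inside each extension and lets us read off an automorphism of $\mathbf{A}$ from any isomorphism of the extensions.

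For the forward implication, assume $\Phi \colon \mathbf{A}_\theta \to \mathbf{A}_\vartheta$ is an isomorphism. Since $\Phi$ sends annihilator to annihilator, it restricts to a linear automorphism $\psi$ of $\mathbb{V}$ and descends to an automorphism $\phi \in \operatorname{Aut}(\mathbf{A})$ on the quotient by $\mathbb{V}$. Fixing a linear section of the projection $\mathbf{A}_\theta \to \mathbf{A}$, write $\Phi(x) = \phi(x) + f(x)$ for a linear map $f \colon \mathbf{A} \to \mathbb{V}$. Expanding $\Phi(x\cdot_\theta y) = \Phi(x)\cdot_\vartheta \Phi(y)$ and comparing $\mathbb{V}$-components yields
\[ \psi\bigl(\theta(x,y)\bigr) = \vartheta\bigl(\phi(x),\phi(y)\bigr) + \delta g(x,y) \]
for a suitable linear $g$ built from $f$ and $\phi$. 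Decomposing in the basis $e_1,\dots,e_s$ turns this into equalities $[\psi\theta_i] = [\phi\vartheta_i]$ in $\mathrm{H}^2(\mathbf{A},\mathbb{C})$ up to a linear change of the $\theta_i$ induced by $\psi$; since such a change does not alter the span, $\langle[\theta_1],\dots,[\theta_s]\rangle$ and $\phi^{-1}\langle[\vartheta_1],\dots,[\vartheta_s]\rangle$ coincide in $G_s(\mathrm{H}^2(\mathbf{A},\mathbb{C}))$, and the two orbits agree.

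For the converse, suppose the orbits coincide. Pick $\phi \in \operatorname{Aut}(\mathbf{A})$ with $\phi\langle[\theta_1],\dots,[\theta_s]\rangle = \langle[\vartheta_1],\dots,[\vartheta_s]\rangle$. At the level of representatives, this gives an invertible $s\times s$ matrix $\psi = (\lambda_{ij})$ and a linear map $f \colon \mathbf{A} \to \mathbb{V}$ such that $\phi\theta(x,y) = \psi^{-1}\vartheta(x,y) + \delta f(x,y)$. Define $\Phi \colon \mathbf{A}_\theta \to \mathbf{A}_\vartheta$ by $\Phi(x+v) = \phi(x) + \psi(v) + f(\phi(x))$ (or the analogous formula dictated by the chosen section) for $x \in \mathbf{A}$, $v \in \mathbb{V}$. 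The cocycle equality above is exactly the condition for $\Phi$ to preserve the multiplication, and invertibility of $\phi$ and $\psi$ ensures $\Phi$ is bijective; verification that $\Phi$ is a $\mathfrak{CCD}$-algebra isomorphism is a short bilinear computation, completing the proof.

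The main obstacle is bookkeeping rather than a conceptual difficulty: one must keep separate the two independent sources of freedom --- the action of $\operatorname{Aut}(\mathbf{A})$ on individual cohomology classes, and the $\mathrm{GL}(\mathbb{V})$-freedom coming from the choice of basis of $\mathbb{V}$ --- and track that only their combined effect on the \emph{span} $\langle[\theta_1],\dots,[\theta_s]\rangle$ is an invariant of the isomorphism type of $\mathbf{A}_\theta$. This is why the statement is phrased as an equality of orbits in the Grassmannian $G_s(\mathrm{H}^2(\mathbf{A},\mathbb{C}))$, not an equality of orbits of tuples of cocycles.
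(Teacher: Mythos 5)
Your argument is correct and is essentially the same as the one the paper relies on: the paper gives no proof of its own but defers to \cite[Lemma~17]{hac16}, and your proposal reproduces exactly that Skjelbred--Sund scheme (using $\operatorname{Ann}({\bf A}_\theta)=\mathbb{V}$ to split any isomorphism into an automorphism $\phi$ of ${\bf A}$, a $\mathrm{GL}(\mathbb{V})$ part $\psi$, and a coboundary, and observing that $\psi$ only changes the span of the $[\theta_i]$). No gaps worth flagging.
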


This shows that there exists a one-to-one correspondence between the set of $\operatorname{Aut}({\bf A})$-orbits on ${\bf T}_{s}\left( {\bf A}\right) $ and the set of
isomorphism classes of ${\bf E}\left( {\bf A},{\mathbb V}\right) $. Consequently we have a
procedure that allows us, given a $ \mathfrak{CCD} $-algebra ${\bf A}'$ of
dimension $n-s$, to construct all non-split central extensions of ${\bf A}'$. This procedure is:

\begin{enumerate}
\item For a given $ \mathfrak{CCD} $-algebra ${\bf A}'$ of dimension $n-s $, determine ${\rm H^{2}}( {\bf A}',\mathbb {C}) $, $\operatorname{Ann}({\bf A}')$ and $\operatorname{Aut}({\bf A}')$.

\item Determine the set of $\operatorname{Aut}({\bf A}')$-orbits on ${\bf T}_{s}({\bf A}') $.

\item For each orbit, construct the $ \mathfrak{CCD} $-algebra associated with a
representative of it.
\end{enumerate}

The above described method gives all (Jordan and non-Jordan) $ \mathfrak{CCD} $-algebras. But we are interested in developing this method in such a way that it only gives non-Jordan $ \mathfrak{CCD} $-algebras, because the classification of all Jordan algebras is given in \cite{kkk18}. Clearly, any central extension of a non-Jordan $ \mathfrak{CCD} $-algebra is non-Jordan. But a Jordan algebra may have extensions which are not Jordan algebras. More precisely, let $\mathfrak{J}$ be a Jordan algebra and $\theta \in {\rm Z_\mathfrak{CCD}^2}(\mathfrak{J}, {\mathbb C}).$ Then ${\mathfrak{J}}_{\theta }$ is a Jordan algebra if and only if
\[\theta(x,y)=\theta(y,x)\]
\[\theta(xy,zt)+\theta(xz,yt)+\theta(xt,yz)= \theta((xz)y,t)+\theta((zt)y,x)+\theta((tx)y,z).\]
for all $x,y,z,t\in {\mathfrak{J}}.$ Define the subspace ${\rm Z_\mathfrak{J}^2}({\mathfrak{J}},{\mathbb C})$ of ${\rm Z_\mathfrak{CCD}^2}({\bf  \mathfrak{J}},{\mathbb C})$ by
\begin{equation*}
{\rm Z_\mathfrak{J}^2}({\mathfrak{J}},{\mathbb C}) =\left\{\begin{array}{ll} \theta \in {\rm Z_\mathfrak{CCD}^2}({\mathfrak{J}},{\mathbb C}): & \theta(x,y)=\theta(y,x), \\  & \theta(xy,zt)+\theta(xz,yt)+\theta(xt,yz)= \\ 
& \theta((xz)y,t)+\theta((zt)y,x)+\theta((tx)y,z)\\
& \text{ for all } x, y,z,t\in {\mathfrak{J}}\end{array}\right\}.
\end{equation*}

Observe that ${\rm B^2}({ \mathfrak{J}},{\mathbb C})\subseteq{\rm Z_\mathfrak{J}^2}({\mathfrak{J}},{\mathbb C}).$
Let ${\rm H_\mathfrak{J}^2}({\mathfrak{J}},{\mathbb C}) =%
{\rm Z_\mathfrak{J}^2}({\mathfrak{J}},{\mathbb C}) \big/{\rm B^2}({\mathfrak{J}},{\mathbb C}).$ Then ${\rm H_\mathfrak{J}^2}({\mathfrak{J}},{\mathbb C})$ is a subspace of $%
{\rm H_\mathfrak{CCD}^2}({\mathfrak{J}},{\mathbb C}).$ Define
\[{\bf R}_{s}({\mathfrak{J}}) =\left\{ {\bf W}\in {\bf T}_{s}({\mathfrak{J}}) :{\bf W}\in G_{s}({\rm H_\mathfrak{J}^2}({\mathfrak{J}},{\mathbb C}) ) \right\}, \]
\[	{\bf U}_{s}({\mathfrak{J}}) =\left\{ {\bf W}\in {\bf T}_{s}({\mathfrak{J}}) :{\bf W}\notin G_{s}({\rm H_\mathfrak{J}^2}({\mathfrak{J}},{\mathbb C}) ) \right\}.\]

Then ${\bf T}_{s}({\mathfrak{J}}) ={\bf R}_{s}(
{\mathfrak{J}})$ $\mathbin{\mathaccent\cdot\cup}$ ${\bf U}_{s}(
{\mathfrak{J}}).$ The sets ${\bf R}_{s}({\mathfrak{J}}) $
and ${\bf U}_{s}({\mathfrak{J}})$ are stable under the action
of $\operatorname{Aut}({\mathfrak{J}}).$ Thus, the $ \mathfrak{CCD} $-algebras
corresponding to the representatives of $\operatorname{Aut}({\mathfrak{J}}) $%
-orbits on ${\bf R}_{s}({\mathfrak{J}})$ are Jordan  algebras,
while those corresponding to the representatives of $\operatorname{Aut}({\mathfrak{J}}%
) $-orbits on ${\bf U}_{s}({\mathfrak{J}})$ are non-Jordan algebras. Hence, we may construct all non-split non-Jordan $ \mathfrak{CCD} $-algebras $%
\bf{A}$ of dimension $n$ with $s$-dimensional annihilator
from a given $ \mathfrak{CCD} $-algebra $\bf{A}%
^{\prime }$ of dimension $n-s$ in the following way:

\begin{enumerate}
\item If $\bf{A}^{\prime }$ is non-Jordan, then apply the procedure.

\item Otherwise, do the following:

\begin{enumerate}
\item Determine ${\bf U}_{s}(\bf{A}^{\prime })$ and $%
\operatorname{Aut}(\bf{A}^{\prime }).$

\item Determine the set of $\operatorname{Aut}(\bf{A}^{\prime })$-orbits on ${\bf U%
}_{s}(\bf{A}^{\prime }).$

\item For each orbit, construct the $ \mathfrak{CCD} $-algebra corresponding to one of its
representatives.
\end{enumerate}
\end{enumerate}

\subsection{Notations}
Let us introduce the following notations. Let ${\bf A}$ be a nilpotent algebra with
a basis $e_{1},e_{2}, \ldots, e_{n}.$ Then by $\Delta_{ij}$\ we will denote the commutative
bilinear form
$\Delta_{ij}:{\bf A}\times {\bf A}\longrightarrow \mathbb C$
$\Delta_{ij}(e_{l},e_{m})  = \delta_{il}\delta_{jm},$
if $i\leq j$ and $l\leq m.$
The set $\left\{ \Delta_{ij}:1\leq i\leq j\leq n\right\}$ is a basis for the linear space of
bilinear forms on ${\bf A},$ so every $\theta \in
{\rm Z^2}({\bf A},\bf \mathbb V )$ can be uniquely written as $
\theta = \displaystyle \sum_{1\leq i\leq j\leq n} c_{ij}\Delta _{ij}$, where $
c_{ij}\in \mathbb C$.
Let us fix the following notations:

$$\begin{array}{lll}
\mathfrak{C}^{i}_{j}& \mbox{---}& j\mbox{th }i\mbox{-dimensional $\mathfrak{CCD}$  (non-Jordan)  algebra.} \\
\mathfrak{C}^{i*}_{j}& \mbox{---}& j\mbox{th }i\mbox{-dimensional $\mathfrak{CCD}$  (Jordan) algebra.}
\end{array}$$

All algebras from the present paper are commutative and considered over the complex field. 
 
\subsection{$4$-dimensional $\mathfrak{CCD}$-algebras}
Thanks to \cite{cfk18} we have an algebraic classification of all  complex $4$-dimensional nilpotent
$\mathfrak{CCD}$-algebras with $2$- and $3$-dimensional annihilator:

\begin{longtable}{llllllll}

			$\mathfrak{C}^{4*}_{01}$&$:$& $e_1 e_1 = e_2$\\
			
			\hline
			$\mathfrak{C}^{4*}_{02}$&$:$& $e_1 e_1 = e_2$ & $e_1 e_2=e_3$\\

			\hline
            $\mathfrak{C}^{4*}_{03}$&$:$& $e_1 e_2=e_3$\\
        	\hline

			$\mathfrak{C}^{4*}_{04}$&$:$& $e_1 e_1 = e_3$  & $e_2 e_2=e_4$\\

			\hline
			$\mathfrak{C}^{4*}_{05}$&$:$& $e_1 e_1 = e_3$ & $e_1 e_2=e_4$\\

			\hline
			$\mathfrak{C}^{4*}_{06}$&$:$& $e_1 e_1 = e_4$ & $e_2 e_3=e_4$\\
			\hline
        	$\mathfrak{C}^{4}_{01}$&$:$& $e_1 e_1 = e_2$ & $e_2 e_2=e_3$\\
			            \hline
    		$\mathfrak{C}^{4}_{02}(0)$  &$:$&  $e_1 e_1 = e_2$ & $e_1 e_2=e_4$ & $e_2 e_2=e_3$

		\end{longtable}
\medskip

\section{Central extensions of  $3$-dimensional nilpotent $\mathfrak{CCD}$-algebras}

\subsection{2-dimensional central extensions of ${\mathfrak{C}_{01}^{3*}}$}
Here we will  collect all information about ${\mathfrak{C}_{01}^{3*}}:$
\begin{longtable}{|l|l|l|l|}
\hline
Algebra  & Multiplication & Cohomology &  Automorphisms  \\
\hline
${\mathfrak{C}}_{01}^{3*}$ &  $  e_1 e_1 = e_2 $ & 
$\begin{array}{lcl}
{\rm H^2_{\mathfrak{J}}}({\mathfrak{C}}_{01}^{3*})&=&\langle [\Delta_{12}],[\Delta_{13}],[\Delta_{23}],[\Delta_{33}] \rangle \\
{\rm H^2_{\mathfrak{CCD}}}({\mathfrak{C}}_{01}^{3*})&=&{\rm H^2_{\mathfrak{J}}}({\mathfrak{C}_{01}^{3*}}) \oplus \langle[\Delta_{22}]\rangle 
\end{array}$& 
$\phi=
	\begin{pmatrix}
	x &    0  &  0\\
	y &  x^2  &  u\\
	z &   0  &  v
	\end{pmatrix}
	$\\
\hline
\end{longtable}

Let us use the following notations
	\begin{align*}
	\nabla_1 = [\Delta_{12}] \quad \nabla_2 = [\Delta_{13}] \quad \nabla_3 = [\Delta_{23}] \quad \nabla_4 = [\Delta_{33}] \quad \nabla_5 = [\Delta_{22}].
	\end{align*}
	Take $\theta=\sum\limits_{i=1}^5\alpha_{i}\nabla_{i}\in {\rm H_{\mathfrak{CD}}^2}(\mathfrak{C}_{01}^{3*}).$
	Since
 	$$
	\phi^T\begin{pmatrix}
	0      &  \alpha_1  & \alpha_2\\
	\alpha_1  & \alpha_5 & \alpha_3\\
	\alpha_2 &  \alpha_3   & \alpha_4
	\end{pmatrix} \phi=
	\begin{pmatrix}
	\alpha^*      &  \alpha^{*}_{1}    & \alpha^*_2\\
	\alpha^*_1    & \alpha^*_5    & \alpha^*_3\\
	\alpha^*_2  &  \alpha^*_3    & \alpha^*_4
	\end{pmatrix},
	$$
	we have
	\begin{longtable}{l}
	$\alpha^*_1 = (\alpha_{1}x+\alpha_{3}z+\alpha_{5}y)x^2$\\
	$\alpha^*_2 = (\alpha_{1}x+\alpha_{3}z+\alpha_{5}y)u+(\alpha_{2}x+\alpha_{3}y+\alpha_{4}z)v$\\
	$\alpha^*_3 = (\alpha_{5}u+\alpha_{3}v)x^2$\\
	$\alpha^*_4 = 2\alpha_{3}uv+\alpha_{4}v^2+\alpha_{5}u^2$\\
	$\alpha^*_5 = \alpha_{5}x^4.$\\
	\end{longtable}

\subsubsection{$1$-dimensional central extensions of ${\mathfrak{C}_{01}^{3*}}$}
Thanks to \cite{fkkv}, we have the following $4$-dimensional $\mathfrak{CCD}$-algebras:

\begin{longtable}{llllllllllllllllllllll}
$\mathfrak{C}^{4*}_{07}$ & $e_1 e_1 = e_2$ & $e_2e_3=e_4$ \\
\hline
$\mathfrak{C}^{4*}_{08}$&  $e_1 e_1 = e_2$& $e_1e_2=e_4$&$ e_3e_3=e_4$ \\
\hline
$\mathfrak{C}^{4}_{03}$&   $e_1 e_1 = e_2$& $e_1e_3=e_4$& $e_2e_2=e_4$		\\
			
			\hline
$\mathfrak{C}^{4}_{04}$&     $e_1 e_1 = e_2$ & $e_2e_2=e_4$& $e_3e_3=e_4$ 
\end{longtable}

\subsubsection{$2$-dimensional central extensions of ${\mathfrak{C}_{01}^{3*}}$}
Since we are interested only in new algebras, consider the vector space generated by the following two cocycles:
$$\begin{array}{rclcrcl}
\theta_1 &=& \sum\limits_{i=1}^4 \alpha_i \nabla_i + \nabla_5  &
\mbox{ and }
&\theta_2 &=& \sum\limits_{i=1}^4 \beta_i \nabla_i.
\end{array}$$

Thus, we have
\begin{longtable}{ll}
	$\alpha^*_1 = (\alpha_{1}x+\alpha_{3}z+y)x^2$ & $\beta^*_1 = (\beta_{1}x+\beta_{3}z)x^2$\\
	$\alpha^*_2 = (\alpha_{1}x+\alpha_{3}z+y)u+(\alpha_{2}x+\alpha_{3}y+\alpha_{4}z)v$& $\beta^*_2 = (\beta_{1}x+\beta_{3}z)u+(\beta_{2}x+\beta_{3}y+\beta_4 z)v$\\
	$\alpha^*_3 = (\alpha_{3}v+u)x^2$ & $\beta^*_3 = \beta_{3}vx^2$\\
	$\alpha^*_4 = 2\alpha_{3}uv+\alpha_{4}v^2+u^2$ & $\beta^*_4 = 2\beta_{3}uv+\beta_{4}v^2$\\
	$\alpha^*_5 = x^4.$& 
	\end{longtable}

Then we have the following cases:
\begin{enumerate} \item if $\beta_3=0, \beta_4=0, \beta_1=0,$ then
   \begin{enumerate}
   \item if $ \alpha_4=\alpha_{3}^2,$ then choosing  $x=1,$ $z=0,$ $y = -\alpha_1,$  $v=1$ and $u = - \alpha_3,$ we have the representative $\langle \nabla_5, \nabla_2 \rangle;$
   \item if $ \alpha_4\neq\alpha_{3}^2,$ then choosing  $x=1,$ $z=0,$ $y = -\alpha_1,$  $v=\frac{1}{ \sqrt{\alpha_4-\alpha_3^2}}$ and $u = - \alpha_3,$ we have the representative $\langle \nabla_4+\nabla_5,\nabla_2 \rangle.$

   \end{enumerate}
   \item if $\beta_3=0, \beta_4=0, \beta_1\neq0,$ then
   \begin{enumerate}
   \item if $\beta_2=\beta_1\alpha_3, \alpha_4=\alpha_{3}^2, \alpha_2=\alpha_1\alpha_3, $ then choosing  $x=1,$ $z=0,$ $y = -\alpha_1,$ $v=1$ and $u = - \alpha_3,$ we have the representative $\langle \nabla_5,\nabla_1 \rangle;$
   \item if $\beta_2=\beta_1\alpha_3, \alpha_4=\alpha_{3}^2, \alpha_2\neq \alpha_1\alpha_3, $ then choosing  
   $x=1,$ $z=0,$  $y = -\alpha_1,$  $v = \frac{1}{\alpha_2-\alpha_1\alpha_3}$ and $u = - v\alpha_3,$
     we have the representative $\langle \nabla_2+\nabla_5,\nabla_1 \rangle;$
   \item if $\beta_2=\beta_1\alpha_3, \alpha_4\neq\alpha_{3}^2, $ then choosing  
   $x=1,$ $z = \frac{(\alpha_2-\alpha_1\alpha_3)}{\alpha_3^2-\alpha_4},$ $y = -\alpha_1 - z \alpha_3,$
       $v = \frac{1}{\sqrt{\alpha_4-\alpha_3^2}}$ and $u = -v\alpha_3,$  we have the representative $\langle \nabla_4+\nabla_5,\nabla_1 \rangle;$
   \item if $\beta_2\neq\beta_1\alpha_3, $ then choosing $x=1,$ $z=0,$ $y = \frac{(\alpha_2\beta_1-\alpha_1\beta_2)}{\beta_2-\beta_1\alpha_3},$  $v = \frac{\beta_1}{\beta_1\alpha_3-\beta_2}$ and 
           $u = -\frac{v\beta_2}{\beta_1},$ we have the family of representatives $\langle \nabla_3 +\alpha \nabla_4+\nabla_5, \nabla_1 \rangle.$
   \end{enumerate}

 \item if $\beta_3=0, \beta_4\neq0,$ then

   \begin{enumerate}
   \item  if $\beta_1=0,$ $\beta_4(\alpha_2-\alpha_1\alpha_3) = \beta_2(\alpha_4-\alpha_3^2),$
   then  choosing $x=1,$ $z=-\frac{\beta_2}{\beta_{4}},$ $y = - \alpha_1 - z \alpha_3,$ $v=1,$ $u = -\alpha_3,$   we have the representative $\langle \nabla_5, \nabla_4\rangle;$
   \item  if $\beta_1=0,$ $\beta_4(\alpha_2-\alpha_1\alpha_3) \neq \beta_2(\alpha_4-\alpha_3^2),$  then
    choosing $x=1,$ $z=-\frac{\beta_2}{\beta_{4}},$ $y = - \alpha_1 - z \alpha_3,$ $v=\frac{\beta_4}{\beta_4(\alpha_2-\alpha_1\alpha_3) - \beta_2(\alpha_4-\alpha_3^2)}$ and $u = -v \alpha_3,$ we have the representative  $\langle \nabla_2+\nabla_5,\nabla_4 \rangle;$
    \item if $\beta_1\neq 0,$ $\beta_4(\alpha_2-\alpha_1\alpha_3) = (\alpha_4-\alpha_3^2)(\beta_2-\beta_1\alpha_3),$ then
    choosing $x=1,$ $z=-\frac{\beta_2-\beta_1\alpha_3}{\beta_4},$ $y=\frac{\beta_1\alpha_4+\beta_2\alpha_3-\beta_4\alpha_1-2\beta_1\alpha^2_3}{\beta_4},$  $v=\sqrt{\frac{\beta_1}{\beta_4}}$ and 
     $u = -v \alpha_3,$ we have the representative  $\langle -\nabla_5,\nabla_1+\nabla_4 \rangle;$
    \item if $\beta_1\neq 0,$ $\beta_4(\alpha_2-\alpha_1\alpha_3) \neq (\alpha_4-\alpha_3^2)(\beta_2-\beta_1\alpha_3),$ then
    choosing $x=\sqrt[3]{\frac{\beta_1(\beta_4(\alpha_2-\alpha_1\alpha_3) - (\alpha_4-\alpha_3^2)(\beta_2-\beta_1\alpha_3))}{\beta_4}},$
    $z=-\frac{x(\beta_2-\beta_1\alpha_3)}{\beta_4},$ $y=\frac{x(\beta_1\alpha_4+\beta_2\alpha_3-\beta_4\alpha_1-2\beta_1\alpha^2_3)}{\beta_4},$
      $v=\frac{x^3}{\beta_4(\alpha_2-\alpha_1\alpha_3) - (\alpha_4-\alpha_3^2)(\beta_2-\beta_1\alpha_3)}$  and
    $u = -v \alpha_3,$ we have the representative  $\langle \nabla_2+\nabla_5,\nabla_1+\nabla_4 \rangle.$

\end{enumerate}

   \item if $\beta_3\neq0,$ then choosing $z=-\frac{x\beta_1}{\beta_{3}},$ $y=\frac{x(\beta_1\beta_4-\beta_2\beta_3)}{\beta_{3}^2},$  $u=-\frac{v\beta_4}{2\beta_{3}},$ we have $\beta_1^*=\beta_2^*=\beta_4^*=0$ and
       \begin{longtable}{l}
       $\alpha_1^* = \frac {\alpha_1\beta_3^2-\alpha_3\beta_1\beta_3+\beta_1\beta_4-\beta_2\beta_3}{\beta_3^2} x^3$\\
       $\alpha_2^* = \frac {-\beta_4(\alpha_1\beta_3^2-\alpha_3\beta_1\beta_3+\beta_1\beta_4-\beta_2\beta_3) +2\beta_3(\alpha_2\beta_3^2+\alpha_3\beta_1\beta_4-\alpha_3\beta_2\beta_3-\alpha_4\beta_1\beta_3)}{2\beta_3^3} x v$ \\
       $\alpha_3^* = \frac {2\alpha_3\beta_3-\beta_4}{2\beta_3} x^2v$\\
       $\alpha_4^* = \frac {-4\alpha_3\beta_3\beta_4+4\alpha_4\beta_3^2+\beta_4^2}{4\beta_3^2} v^2$\\
       $\alpha_5^*= x^4.$
       \end{longtable}
       Let us give the denotation
       \begin{longtable}{l}  $A=\alpha_1\beta_3^2-\alpha_3\beta_1\beta_3+\beta_1\beta_4-\beta_2\beta_3$\\
       $B=-\beta_4(\alpha_1\beta_3^2-\alpha_3\beta_1\beta_3+\beta_1\beta_4-\beta_2\beta_3) +2\beta_3(\alpha_2\beta_3^2+\alpha_3\beta_1\beta_4-\alpha_3\beta_2\beta_3-\alpha_4\beta_1\beta_3)$\\
       $C=-4\alpha_3\beta_3\beta_4+4\alpha_4\beta_3^2+\beta_4^2.$
       \end{longtable}
Consider the following subcases:
   \begin{enumerate}
   \item if $ A=0,$
   $B=0,$
   $C=0,$
   then we have the representative $\langle \nabla_5, \nabla_3 \rangle$;
   \item if $ A=0,$
   $B=0,$
   $C\neq 0,$
   then choosing $x=1,$ $v=\frac{2\beta_3}{\sqrt{C}},$ we have the representative $\langle \nabla_4 + \nabla_5, \nabla_3 \rangle$;
   \item if $ A=0,$
   $B\neq0,$
   $C= 0,$
   then choosing $ v=\frac{2\beta^3_3}{B}x^3,$ we have the representative $\langle \nabla_2 + \nabla_5, \nabla_3 \rangle$;
   \item if $ A=0,$
   $B\neq0,$
   $C\neq 0,$
   then choosing $x=\frac{B}{\beta^2_3\sqrt{C}},$ $v=\frac{2B}{\beta_3 C}x,$ we have the representative $\langle \nabla_2 + \nabla_4 + \nabla_5, \nabla_3 \rangle$;
   \item if $ A\neq 0,$
   $B=0,$
   $C= 0,$
   then choosing $ x=\frac{A} {\beta^2_3},$ we have the representative $\langle \nabla_1 + \nabla_5, \nabla_3 \rangle$;
   \item if $ A\neq 0,$
   $B=0,$
   $C\neq 0,$
   then choosing $x=\frac{A}{\beta^2_3},$ $ v=\frac{2\beta_3}{\sqrt{C}}x^2,$ we have the representative $\langle \nabla_1 + \nabla_4 + \nabla_5, \nabla_3 \rangle$;
   \item if $ A\neq 0,$
   $B\neq 0,$
      then choosing $x=\frac{A}{\beta^2_3},$ $ v=\frac{2\beta_3 A}{B}x^2,$ we have the family of representatives $\langle \nabla_1 + \nabla_2 + \alpha \nabla_4 + \nabla_5, \nabla_3 \rangle$;

   \end{enumerate}
\end{enumerate}

Now we have the following distinct orbits:

\begin{longtable} {lll}
$\langle \nabla_5, \nabla_2 \rangle$ & $\langle \nabla_4+\nabla_5,\nabla_2 \rangle$ & $\langle \nabla_5,\nabla_1 \rangle$ \\
$\langle \nabla_2+\nabla_5,\nabla_1 \rangle$ & $\langle \nabla_4+\nabla_5,\nabla_1 \rangle$ &
$\langle \nabla_3 +\alpha \nabla_4+\nabla_5, \nabla_1 \rangle$ \\
$\langle \nabla_5, \nabla_4\rangle$ &
$\langle \nabla_2+\nabla_5,\nabla_4 \rangle$ &
$\langle -\nabla_5,\nabla_1+\nabla_4 \rangle$ \\
$\langle \nabla_2+\nabla_5,\nabla_1+\nabla_4 \rangle$& $\langle \nabla_5, \nabla_3 \rangle$ & $\langle \nabla_4 + \nabla_5, \nabla_3 \rangle$ \\
$\langle \nabla_2 + \nabla_5, \nabla_3 \rangle$ & $\langle \nabla_2 + \nabla_4 + \nabla_5, \nabla_3 \rangle$ & $\langle \nabla_1 + \nabla_5, \nabla_3 \rangle$ \\
$\langle \nabla_1 + \nabla_4 + \nabla_5, \nabla_3 \rangle$ & $\langle \nabla_1 + \nabla_2 + \alpha \nabla_4 + \nabla_5, \nabla_3 \rangle$
\end{longtable}

Hence, we have the following new $5$-dimensional nilpotent $\mathfrak{CCD}$-algebras:

\begin{longtable}{llllllllll}
$\mathfrak{C}^5_{08}$ & $: $ & $e_1e_1=e_2$ & $e_1e_3=e_4$ & $e_2e_2=e_5 $\\
$\mathfrak{C}^5_{09}$ & $: $& $e_1e_1=e_2$ & $e_1e_3=e_4$ & $e_2e_2=e_5$ & $e_3e_3=e_5 $\\
$\mathfrak{C}^5_{10}$ & $ :$ & $e_1e_1=e_2$ & $e_1e_2=e_4$ & $e_2e_2=e_5 $\\
$\mathfrak{C}^5_{11}$ & $ : $ & $e_1e_1=e_2$ & $e_1e_2=e_4$ & $e_1e_3=e_5$ & $e_2e_2=e_5 $\\
$\mathfrak{C}^5_{12}(-1)$ & $ : $ & $e_1e_1=e_2$ & $e_1e_2=e_4$ & $e_2e_2=e_5$ & $e_3e_3=e_5 $\\
$\mathfrak{C}^5_{13}(-1,\beta)$ & $ : $ & $e_1e_1=e_2$ & $e_1e_2=e_4$ & $e_2e_2=e_5$ & $e_2e_3=e_5$ & $e_3e_3=\beta e_5 $\\
$\mathfrak{C}^5_{14}$ & $ : $ & $e_1e_1=e_2$ & $e_2e_2=e_5$ & $e_3e_3=e_4 $\\
$\mathfrak{C}^5_{15}$ & $ : $ & $e_1e_1=e_2$ & $e_1e_3=e_5 $ & $ e_2e_2=e_5$ & $e_3e_3=e_4 $\\
$\mathfrak{C}^5_{16}(-1)$ & $ : $ & $e_1e_1=e_2$ & $e_1e_2=e_4$ & $e_2e_2=-e_5$ & $e_3e_3=e_4 $\\
$\mathfrak{C}^5_{17}$ & $ : $ & $e_1e_1=e_2$ & $e_1e_2=e_4$ & $e_1e_3=e_5$ & $e_2e_2=e_5$ & $e_3e_3=e_4 $\\
$\mathfrak{C}^5_{18}$ & $ : $ & $e_1e_1=e_2$ & $e_2e_2=e_5$ & $e_2e_3=e_4 $\\
$\mathfrak{C}^5_{19}$ & $ : $ & $e_1e_1=e_2$ & $e_2e_2=e_5$ & $e_2e_3=e_4$ & $e_3e_3=e_5 $\\
$\mathfrak{C}^5_{20}$ & $ : $ & $e_1e_1=e_2$ & $e_1e_3=e_5$ & $e_2e_2=e_5$ & $e_2e_3=e_4 $\\
$\mathfrak{C}^5_{21}$ & $ : $ & $e_1e_1=e_2$ & $e_1e_3=e_5$ & $e_2e_2=e_5$ & $e_2e_3=e_4$ & $e_3e_3=e_5 $\\
$\mathfrak{C}^5_{22}$ & $ : $ & $e_1e_1=e_2$ & $e_1e_2=e_5$ & $e_2e_2=e_5$ & $e_2e_3=e_4 $\\
$\mathfrak{C}^5_{23}$ & $ : $ & $e_1e_1=e_2$ & $e_1e_2=e_5$ & $e_2e_2=e_5$ & $e_2e_3=e_4$ & $e_3e_3=e_5 $\\
$\mathfrak{C}^5_{24}(\alpha)$ & $ : $ & $e_1e_1=e_2$ & $e_1e_2=e_5$ & $e_1e_3=e_5$ & $e_2e_2=e_5$ & $e_2e_3=e_4$ & $e_3e_3=\alpha e_5 $

\end{longtable} 

 \subsection{2-dimensional central extensions of ${\mathfrak{C}_{02}^{3*}}$}
Here we will  collect all information about ${\mathfrak{C}_{02}^{3*}}:$
\begin{longtable}{|l|l|l|l|}
\hline
Algebra  & Multiplication & Cohomology   \\
\hline
${\mathfrak{C}_{02}^{3*}}$ &  $ e_1 e_1 = e_2, e_1 e_2 = e_3 $ & 
$\begin{array}{lcl}
{\rm H^2_{\mathfrak{J}}}({\mathfrak{C}}_{02}^{3*})&=&\langle [\Delta_{13}+\Delta_{22}]\rangle \\
{\rm H^2_{\mathfrak{CCD}}}({\mathfrak{C}}_{02}^{3*})&=&{\rm H^2_{\mathfrak{J}}}({\mathfrak{C}_{02}^{3*}}) \oplus \langle[\Delta_{22}]\rangle 
\end{array}$\\
\hline
\end{longtable}
Hence, we have following new algebra:
\begin{longtable}{llllllllll}
$\mathfrak{C}^{4*}_{09}$ &$:$&  $e_1 e_1 = e_2$ & $e_1 e_2=e_3$ & $e_1e_3=e_4$  & $e_2e_2= e_4$ \\
\hline
$\mathfrak{C}^{4}_{02}(\alpha)$&$:$&  $e_1 e_1 = e_2$  & $e_1 e_2=e_3$& $e_1e_3= \alpha e_4$  & $e_2e_2= (\alpha +1)e_4$ \\
\hline
$\mathfrak{C}_{25}^5$&$:$& $e_1e_1=e_2$ & $e_1e_2=e_3$ &$e_1e_3=e_4$ & $e_2e_2=e_5$
\end{longtable}

\subsection{2-dimensional central extensions of ${\mathfrak{C}_{03}^{3*}}$}
Here we will  collect all information about ${\mathfrak{C}_{03}^{3*}}:$
{\tiny \begin{longtable}{|l|l|l|l|}
\hline
Algebra  & Multiplication & Cohomology &  Automorphisms  \\
\hline

${\mathfrak{C}_{03}^{3*}}$ &  $ e_1 e_2 = e_3 $ &
$\begin{array}{lcl}
{\rm H^2_{\mathfrak{J}}}({\mathfrak{C}_{03}^{3*}})&=&\langle [\Delta_{11}],[\Delta_{22}],[\Delta_{13}],[\Delta_{23}]\rangle \\
{\rm H^2_{\mathfrak{CCD}}}({\mathfrak{C}_{03}^{3*}})&=&{\rm H^2_{\mathfrak{J}}}({\mathfrak{C}_{03}^{3*}}) \oplus \langle[\Delta_{33}]\rangle
\end{array}$&
$\phi_1=
	\begin{pmatrix}
	x &    0  &  0\\
	0 &  u  &  0\\
	z &  v  &  xu
	\end{pmatrix},
	\phi_2=
	\begin{pmatrix}
	0 & u  &  0\\
	x &  0  &  0\\
	z &  v  &  xu
	\end{pmatrix}$
	\\
\hline
\end{longtable}}	
Let us use the following notations
\begin{align*}
	\nabla_1 = [\Delta_{11}] \quad \nabla_2 = [\Delta_{22}] \quad \nabla_3 = [\Delta_{13}]  \quad \nabla_4 = [\Delta_{23}] \quad \nabla_5 = [\Delta_{33}].
	\end{align*}
	Take $\theta=\sum\limits_{i=1}^5\alpha_{i}\nabla_{i}\in {\rm H_{\mathfrak{CD}}^2}(\mathfrak{C}_{03}^{3*}).$
	Since
	$$
	\phi_1^T\begin{pmatrix}
	 \alpha_1    & 0  & \alpha_3\\
	0 & \alpha_2 & \alpha_4\\
	\alpha_3 &  \alpha_4   & \alpha_5
	\end{pmatrix} \phi_1=
	\begin{pmatrix}
	\alpha^*_1      &  \alpha^*    & \alpha^*_3\\
	\alpha^*    & \alpha^*_2    & \alpha^*_4\\
	\alpha^*_3  &  \alpha^*_4    & \alpha^*_5
	\end{pmatrix},
	$$
we have
\begin{longtable}{l}
	$\alpha^*_1 = x^2 \alpha_1+2 x z \alpha_3+z^2 \alpha_5$\\
	$\alpha^*_2 = u^2 \alpha_2+2 u v \alpha_4+v^2 \alpha_5$\\
	$\alpha^*_3 = x u (x \alpha_3+z \alpha_5)$\\
	$\alpha^*_4 = x u (u \alpha_4+v \alpha_5)$\\
	$\alpha^*_5 = x^2u^2 \alpha_5$.\\
	\end{longtable}

\subsubsection{$1$-dimensional central extensions of ${\mathfrak{C}_{03}^{3*}}$}
Thanks to \cite{fkkv}, we have the following $4$-dimensional $\mathfrak{CCD}$-algebras:

\begin{longtable}{llllllllllllllllllllll}
$\mathfrak{C}^{4*}_{10}$& $e_1 e_2 = e_3$ & $e_1 e_3=e_4$& $e_2e_2=e_4$ \\ 	\hline
$\mathfrak{C}^{4*}_{11}$& $e_1 e_2 = e_3$ & $e_1 e_3=e_4$ & $e_2e_3=e_4$ \\\hline
$\mathfrak{C}^{4*}_{12}$& $e_1 e_2 = e_3$ & $e_1 e_3=e_4$ \\\hline
$\mathfrak{C}^{4}_{05}$ & $e_1 e_2 = e_3$ & $e_3 e_3=e_4$	\\ \hline
$\mathfrak{C}^{4}_{06}$ & $e_1 e_1 = e_4$ & $e_1 e_2=e_3$ & $e_2e_2=e_4$& $e_3e_3=e_4$\\\hline
$\mathfrak{C}^{4}_{07}$ & $e_1 e_1 = e_4$ & $e_1 e_2=e_3$ & $e_3e_3=e_4$\\
\end{longtable}

\subsubsection{$2$-dimensional central extensions of ${\mathfrak{C}_{03}^{3*}}$}
Consider the vector space generated by the following two cocycles:
$$\begin{array}{rclcrcl}
\theta_1 &=& \sum\limits_{i=1}^4 \alpha_i \nabla_i + \nabla_5  &
\mbox{ and }
&\theta_2 &=& \sum\limits_{i=1}^4 \beta_i \nabla_i,
\end{array}$$

Thus, we have
\begin{longtable}{ll}
	$\alpha^*_1 = x^2 \alpha_1+2 x z \alpha_3+z^2 $ & $\beta^*_1 = x^2 \beta_1+2 x z \beta_3$\\
	$\alpha^*_2 = u^2 \alpha_2+2 u v \alpha_4+v^2 $ & $\beta^*_2 = u^2 \beta_2+2 u v \beta_4$\\
	$\alpha^*_3 = x u (x \alpha_3+z )$ & $\beta^*_3 = x^2 u \beta_3$\\
	$\alpha^*_4 = x u (u \alpha_4+v )$ &$\beta^*_4 = x u^2 \beta_4$\\
	$\alpha^*_5 = x^2u^2$
	\end{longtable}

Then we consider the following cases:
\begin{enumerate}
    \item if $\beta_3\neq 0,$ $\beta_4\neq 0,$ then we may suppose $\alpha_4=0,$ and

\begin{enumerate}
    \item  if    $2\alpha_3 \beta_3 \beta_4^2 \neq \beta_1\beta_4^2 - \beta_2 \beta_3^2,$
    then choosing 
    $x=\frac{2\alpha_3 \beta_3 \beta_4^2-\beta_1\beta_4^2 +\beta_2 \beta_3^2}{2\beta_3^2\beta_4},$
    $z=-\frac{x\beta_1}{2\beta_3},$ $u=\frac{x\beta_3}{\beta_4}$ and $v=-\frac{u\beta_2}{2\beta_4},$ we have the family of representatives
    $\langle \alpha \nabla_1+\beta \nabla_2+\nabla_3+\nabla_5, \nabla_3+\nabla_4 \rangle;$
\item    if $2\alpha_3 \beta_3 \beta_4^2 = \beta_1\beta_4^2 - \beta_2 \beta_3^2,$ and $4\alpha_2\beta_4^2\neq - \beta_2^2,$
    then choosing 
    $x=\frac{\sqrt{4\alpha_2\beta_4^2+\beta_2^2}}{2\beta_4},$ $z=-\frac{x\beta_1}{2\beta_3},$ $u=\frac{x\beta_3}{\beta_4}$ and $v=-\frac{u\beta_2}{2\beta_4},$
     we have the family of representatives
    $\langle \alpha \nabla_1+ \nabla_2+\nabla_5, \nabla_3+\nabla_4 \rangle;$

\item if $2\alpha_3 \beta_3 \beta_4^2 = \beta_1\beta_4^2 - \beta_2 \beta_3^2,$  $4\alpha_2\beta_4^2 = - \beta_2^2$ and
$\beta_1^2 \neq 4 \alpha_3 \beta_1 \beta_3-4 \alpha_1 \beta_3^2,$
    then choosing 
    $x=\frac{\beta_4\sqrt{\beta_1^2- 4 \alpha_3 \beta_1 \beta_3+4 \alpha_1 \beta_3^2}}{2\beta_3^2},$
    $z=-\frac{x\beta_1}{2\beta_3},$  $u=\frac{x\beta_3}{\beta_4}$ and $v=-\frac{u\beta_2}{2\beta_4},$
     we have the  representative
    $\langle  \nabla_1+\nabla_5, \nabla_3+\nabla_4 \rangle;$

\item if $2\alpha_3 \beta_3 \beta_4^2 = \beta_1\beta_4^2 - \beta_2 \beta_3^2,$  $4\alpha_2\beta_4^2 = - \beta_2^2$ and
$\beta_1^2 = 4 \alpha_3 \beta_1 \beta_3-4 \alpha_1 \beta_3^2,$
    then choosing $x=\beta_4,$ $z=-\frac{\beta_1, \beta_4}{2\beta_3},$  $u=\beta_3$ and $v=-\frac{\beta_2\beta_3}{2\beta_4},$
         we have the representative
    $\langle  \nabla_5, \nabla_3+\nabla_4 \rangle.$

\end{enumerate}

\item   if $\beta_3= 0,$ $\beta_4\neq 0$ then we may suppose $\alpha_4=0,$ and
\begin{enumerate}
    \item if $\beta_1\neq 0$ and $\beta_2^2\neq - 4 \alpha_2 \beta_4^2,$
    then choosing
    $x=\frac{\sqrt{\beta_2^2+4 \alpha_2 \beta_4^2}}{2 \beta_4},$
    $z=-x\alpha_3,$ $u=\sqrt{\frac{x\beta_1} {\beta_4}}$ and $v=-\frac{u\beta_2}{2\beta_4},$
    we have the family of representatives
    $\langle  \alpha \nabla_1+\nabla_2+\nabla_5,  \nabla_1+\nabla_4 \rangle;$

     \item if $\beta_1\neq 0,$  $\beta_2^2= - 4 \alpha_2 \beta_4^2$
     and
     $ \beta_1\beta_2\neq 2\beta^2_4(\alpha^2_3-\alpha_1) ,$
    then choosing
     $x=\frac{\beta_1\beta_2+ 2\beta^2_4(\alpha_1-\alpha_3^2)}{2\beta_1\beta_4},$ 
    $z=-x\alpha_3,$ $u=\sqrt{\frac{x\beta_1} {\beta_4}}$ and $v=-\frac{u\beta_2}{2\beta_4},$
    we have the representative
    $\langle   \nabla_1+ \nabla_5,  \nabla_1+\nabla_4 \rangle;$

     \item if $\beta_1\neq 0,$  $\beta_2^2= - 4 \alpha_2 \beta_4^2$ and
    $ \beta_1\beta_2= 2\beta^2_4(\alpha^2_3-\alpha_1) ,$
    then choosing
    $x=\beta_4,$
    $z=-\alpha_3 \beta_4,$  $u=\sqrt{\beta_1}$
    and
    $v=-\frac{\sqrt{\beta_1} \beta_2}{2 \beta_4},$
    we have the representative
    $\langle   \nabla_5,  \nabla_1+\nabla_4 \rangle;$

    \item if $\beta_1=0,$ $\alpha_1\neq \alpha_3^2$ and $\beta_2^2\neq -4 \alpha_2 \beta_4^2,$ then by choosing
    $x= \frac{\sqrt{\beta_2^2+4 \alpha_2 \beta_4^2}}
    {2\beta_4 },$
    $z=-x\alpha_3,$ 
    $u=\sqrt{\alpha_1-\alpha_3^2}$
     and $v=-\frac{u\beta_2}{2\beta_4},$
    we have the representative
     $\langle  \nabla_1+ \nabla_2+\nabla_5, \nabla_4 \rangle;$

    \item if $\beta_1=0,$ $\alpha_1\neq \alpha_3^2$ and $\beta_2^2= -4 \alpha_2 \beta_4^2,$ then choosing
    $x=1,$
    $z=-\alpha_3,$ $u=\sqrt{\alpha_1-\alpha_3^2}$
    and
    $v=-\frac{u\beta_2}{2\beta_4},$
    we have the representative
    $\langle  \nabla_1+ \nabla_5,  \nabla_4 \rangle;$

    \item if $\beta_1=0,$ $\alpha_1= \alpha_3^2$ and $\beta_2^2\neq -4 \alpha_2 \beta_4^2 $ then choosing
    $x=\frac{\sqrt{\beta_2^2+4 \alpha_2 \beta_4^2}}{2 \beta_4},$
    $u=2 \beta_4,$
    $z=-x \alpha_3$
    and
    $v=-\beta_2,$
    we have the representative
      $\langle \nabla_2+ \nabla_5,  \nabla_4 \rangle;$

     \item if $\beta_1=0,$ $\alpha_1= \alpha_3^2$ and $\beta_2^2= -4 \alpha_2 \beta_4^2 $ then choosing
        $x=1,$
        $u=2 \beta_4,$
        $z=-\alpha_3$
        and
        $v=-\beta_2,$
        we have the representative
        $\langle \nabla_5,  \nabla_4 \rangle.$

\end{enumerate}

\item   if $\beta_3\neq 0$ and  $\beta_4= 0,$
then by applying $\phi_2$ with $x=u=1$ and $z=v=0,$ we have $\beta_3= 0$ and  $\beta_4\neq0.$
It is the case considered above.

\item if $\beta_3=0$ and $\beta_4=0,$ then

\begin{enumerate}
    \item if $\beta_1\neq 0,$ $\beta_2\neq 0,$ $\alpha_2=0$ and $\alpha_4^2 \beta_1\neq -(\alpha_1-\alpha_3^2) \beta_2,$
    then choosing
    $x=\sqrt{\frac{\alpha_4^2 \beta_1+(\alpha_1-\alpha_3^2) \beta_2}{\beta_1}},$
    $z=-x \alpha_3,$ $u=x \sqrt{\frac{\beta_1}{\beta_2}},$
    and
    $v=-u \alpha_4,$
    we have the representative
      $\langle \nabla_1+\nabla_5,  \nabla_1+\nabla_2 \rangle;$

   \item  if $\beta_1\neq 0,$ $\beta_2\neq 0,$ $\alpha_2=0$ and $\alpha_4^2 \beta_1= -(\alpha_1-\alpha_3^2) \beta_2,$
    then choosing
    $x=\sqrt{\beta_2},$
    $u=\sqrt{\beta_1},$
    $z=-\alpha_3 \sqrt{\beta_2}$
    and
    $v=-\alpha_4 \sqrt{\beta_1},$
    we have the representative
      $\langle \nabla_5,  \nabla_1+\nabla_2 \rangle;$

    \item if $\beta_1= 0,$ $\beta_2\neq 0,$ $\alpha_2=0$ and $\alpha_1\neq \alpha_3^2,$
    then choosing
    $x=1,$
    $u=\sqrt{\alpha_1-\alpha_3^2},$
    $z=-\alpha_3$
    and
    $v=u \alpha_4,$
    we have the representative
      $\langle \nabla_1+\nabla_5,  \nabla_2 \rangle;$

   \item if $\beta_1= 0,$ $\beta_2\neq 0,$ $\alpha_2=0$ and $\alpha_1= \alpha_3^2,$
    then choosing
    $x=1,$
    $u=1,$
    $z=-\alpha_3$
    and
    $v=-\alpha_4 ,$
    we have the representative
      $\langle  \nabla_5,  \nabla_2 \rangle;$

 \item if $\beta_1\neq 0$ and $\beta_2= 0,$ then by applying $\phi_2$ with $x=u=1$ and $z=v=0,$ we have $\beta_1= 0$ and  $\beta_2\neq0.$ It is the case considered above.

\end{enumerate}

\end{enumerate}

Summarizing, all cases, we have the following distinct  orbits 

\begin{longtable}{lll}
$\langle \alpha \nabla_1+\beta \nabla_2+\nabla_3+\nabla_5, \nabla_3+\nabla_4 \rangle$ &
$\langle \alpha \nabla_1+ \nabla_2+\nabla_5, \nabla_3+\nabla_4 \rangle$&
$\langle  \nabla_1+\nabla_5, \nabla_3+\nabla_4 \rangle$\\
$\langle  \nabla_5, \nabla_3+\nabla_4 \rangle$ &
 $\langle  \alpha \nabla_1+\nabla_2+\nabla_5,  \nabla_1+\nabla_4 \rangle$ &
 $\langle   \nabla_1+ \nabla_5,  \nabla_1+\nabla_4 \rangle$ \\
  $\langle   \nabla_5,  \nabla_1+\nabla_4 \rangle$ &
  $\langle  \nabla_1+ \nabla_2+\nabla_5, \nabla_4 \rangle$ &
   $\langle  \nabla_1+ \nabla_5, \nabla_4 \rangle$ \\
    $\langle  \nabla_2+ \nabla_5, \nabla_4 \rangle$&
     $\langle \nabla_5, \nabla_4 \rangle$ &
     $\langle \nabla_1+\nabla_5,  \nabla_1+\nabla_2 \rangle$\\
      $\langle \nabla_5,  \nabla_1+\nabla_2 \rangle$ &
      $\langle \nabla_1+\nabla_5,  \nabla_2 \rangle$ &
       $\langle  \nabla_5,  \nabla_2 \rangle$
 \end{longtable}

Hence, we have the following new $5$-dimensional nilpotent $\mathfrak{CCD}$-algebras:

\begin{longtable}{llllllllll}
$\mathfrak{C}^5_{26}(\alpha,\beta)$ & $: $ & $e_1e_1=\alpha e_5$ & $e_1e_2=e_3$ & $e_2e_2=\beta e_5$ & $e_1e_3=e_4+e_5$ & $e_2e_3=e_4$ & $e_3e_3=e_5 $\\
$\mathfrak{C}^5_{27}(\alpha)$ & $ : $ & $e_1e_1=\alpha e_5$ & $e_1e_2=e_3$ & $e_2e_2=e_5$ & $e_1e_3=e_4$ & $e_2e_3=e_4$ & $e_3e_3=e_5 $\\
$\mathfrak{C}^5_{28}$ & $ : $ & $e_1e_1=e_5$ & $e_1e_2=e_3$ & $e_1e_3=e_4$ & $e_2e_3=e_4$ & $e_3e_3=e_5 $\\
$\mathfrak{C}^5_{29}$ & $ : $ & $e_1e_2=e_3$ & $e_1e_3=e_4$ & $e_2e_3=e_4$ & $e_3e_3=e_5 $\\
$\mathfrak{C}^5_{30}(\alpha)$ & $ : $ & $e_1e_1=e_4+\alpha e_5$ & $e_1e_2=e_3$ & $e_2e_2=e_5$ & $e_2e_3=e_4$ & $e_3e_3=e_5 $\\
$\mathfrak{C}^5_{31}$ & $ : $ & $e_1e_1=e_4+e_5$ & $e_1e_2=e_3$ & $e_2e_3=e_4$ & $e_3e_3=e_5 $\\
$\mathfrak{C}^5_{32}$ & $ : $ & $e_1e_1=e_4$ & $e_1e_2=e_3$ & $e_2e_3=e_4$ & $e_3e_3=e_5 $\\
$\mathfrak{C}^5_{33}$ & $ : $ & $e_1e_1=e_5$ & $e_1e_2=e_3$ & $e_2e_2=e_5$ & $e_2e_3=e_4$ & $e_3e_3=e_5 $\\
$\mathfrak{C}^5_{34}$ & $ : $ & $e_1e_1=e_5$ & $e_1e_2=e_3$ & $e_2e_3=e_4$ & $e_3e_3=e_5 $\\
$\mathfrak{C}^5_{35}$ & $ :$ & $e_1e_2=e_3$ & $e_2e_2=e_5$ & $e_2e_3=e_4$ & $e_3e_3=e_5 $\\
$\mathfrak{C}^5_{36}$ & $ :$ & $e_1e_2=e_3$ & $e_2e_3=e_4$ & $e_3e_3=e_5 $\\
$\mathfrak{C}^5_{37}$ & $ :$ & $e_1e_1=e_4+e_5 $ & $e_1e_2=e_3$ & $e_2e_2=e_4$ & $e_3e_3=e_5 $\\
$\mathfrak{C}^5_{38}$ & $ :$ & $e_1e_1=e_4$ & $e_1e_2=e_3$ & $e_2e_2=e_4$ & $e_3e_3=e_5 $\\
$\mathfrak{C}^5_{39}$ & $ :$ & $e_1e_1=e_5$ & $e_1e_2=e_3$ & $e_2e_2=e_4$ & $e_3e_3=e_5 $\\
$\mathfrak{C}^5_{40}$ & $ : $ & $e_1e_2=e_3$ & $e_2e_2=e_4$ & $e_3e_3=e_5 $
\end{longtable}
 where 
$\mathfrak{C}^5_{26}(\alpha,\beta) \cong  \mathfrak{C}^5_{26}(\beta, \alpha)$ 
 and $\mathfrak{C}^5_{27}(\alpha) \cong  \mathfrak{C}^5_{27}(\frac 1 {\alpha}).$ 
 
 \subsection{2-dimensional central extensions of ${\mathfrak{C}_{01}^{3}}$}
Here we will  collect all information about ${\mathfrak{C}_{01}^{3}}:$

\begin{longtable}{|l|ll|l|}
\hline
Algebra  & \multicolumn{2}{l|}{Multiplication} & Cohomology   \\
\hline
${\mathfrak{C}}_{01}^{3}$ &  $   e_1 e_1 = e_2$&$ e_2 e_2 = e_3$ & 
$\begin{array}{lcl}
{\rm H^2_{\mathfrak{J}}}({\mathfrak{C}}_{01}^{3})&=& \varnothing  \\
{\rm H^2_{\mathfrak{CCD}}}({\mathfrak{C}}_{01}^{3})&=& \langle [\Delta_{12}]\rangle 
\end{array}$\\
\hline
\end{longtable}
It is follow that there are no non split $\mathfrak{CCD}$-central extensions of ${\mathfrak{C}}_{01}^{3}.$

\medskip

\section{Central extensions of   $4$-dimensional nilpotent commutative $\mathfrak{CD}$ algebras}
\subsection{$1$-dimensional central extensions of $\mathfrak{C}^{4*}_{01}$.}
Here we will  collect all information about ${\mathfrak{C}_{01}^{4*}}:$

\begin{longtable}{|l|l|l|l|}
\hline
Algebra  & Multiplication & Cohomology &  Automorphisms  \\
\hline
${\mathfrak{C}}_{01}^{4*}$ &  $  e_1 e_1 = e_2 $ & 
$\begin{array}{lcl}
{\rm H^2_{\mathfrak{J}}}({\mathfrak{C}}_{01}^{4*})&=&\langle [\Delta_{ij}]  \rangle_{1\leq i \leq j\leq 4}^{ (i,j)\not\in \{  (1,1), \ (2,2) \} } \\
{\rm H^2_{\mathfrak{CCD}}}({\mathfrak{C}}_{01}^{4*})&=&{\rm H^2_{\mathfrak{J}}}({\mathfrak{C}_{01}^{4*}}) \oplus \langle[\Delta_{22}]\rangle 
\end{array}$& 
$\phi=
	\begin{pmatrix}
	x &  0  & 0 & 0\\
	q &  x^2& r & u\\
	w &  0  & t & k\\
	e &  0  & y & l
	\end{pmatrix}
	$\\
\hline
\end{longtable}

Let us use the following notations:
\begin{longtable}{lllllll}
$\nabla_1 = [\Delta_{12}]$ & $\nabla_2 = [\Delta_{13}]$ & $\nabla_3 = [\Delta_{14}]$ \\
$\nabla_4 = [\Delta_{23}]$ & $\nabla_5 = [\Delta_{24}]$ & $\nabla_6 = [\Delta_{33}]$ \\ 
$\nabla_7 = [\Delta_{34}]$ & $\nabla_8 = [\Delta_{44}]$ & $\nabla_9 =[\Delta_{22}]$
\end{longtable}

Take $\theta=\sum\limits_{i=1}^9\alpha_{i}\nabla_{i}\in {\rm H}_{\mathfrak{CCD}}^2(\mathfrak{C}^{4*}_{01}).$
We are interested only in $\alpha_9\neq 0$,  $(\alpha_2, \alpha_4,\alpha_6,\alpha_7)\neq (0,0,0,0),$ and $(\alpha_3, \alpha_5,\alpha_7,\alpha_8)\neq (0,0,0,0).$ Then we can suppose that $\alpha_9=1.$ 
Since
$$
\phi^T\begin{pmatrix}
0 & \alpha_1 & \alpha_2 & \alpha_3 \\
\alpha_1 & 1 & \alpha_4 & \alpha_5 \\
\alpha_2 & \alpha_4 & \alpha_6  & \alpha_7\\
\alpha_3 & \alpha_5 & \alpha_7  & \alpha_8
\end{pmatrix} \phi=
\begin{pmatrix}
\alpha^*   & \alpha^*_1 & \alpha^*_2 & \alpha^*_3 \\
\alpha^*_1 & \alpha^*_9 & \alpha^*_4 & \alpha^*_5 \\
\alpha^*_2 & \alpha^*_4 & \alpha^*_6 & \alpha^*_7\\
\alpha^*_3 & \alpha^*_5 & \alpha^*_7 & \alpha^*_8
\end{pmatrix},
$$ 	
we have
	\begin{longtable}{lcl}
	$\alpha^*_1$ &$=$& $x^2 (q+x \alpha_1+w \alpha_4+e \alpha_5)$\\
	$\alpha^*_2$&$=$& $r (q+x \alpha_1+w \alpha_4+e \alpha_5)+t (x \alpha_2+q \alpha_4+w \alpha_6+e \alpha_7)+y (x \alpha_3+q \alpha_5+w \alpha_7+e \alpha_8)$\\
	$\alpha^*_3$&$=$& $u (q+x \alpha_1+w \alpha_4+e \alpha_5)+k (x \alpha_2+q \alpha_4+w \alpha_6+e \alpha_7)+l (x \alpha_3+q \alpha_5+w \alpha_7+e \alpha_8)$\\
	$\alpha^*_4$&$=$& $x^2 (r+t \alpha_4+y \alpha_5)$\\
	$\alpha^*_5$&$=$& $x^2 (u+k \alpha_4+l \alpha_5)$\\
    $\alpha^*_6$&$=$& $r^2+2 r (t \alpha_4+y \alpha_5)+t^2 \alpha_6+2 t y \alpha_7+y^2 \alpha_8$\\
    $\alpha^*_7$&$=$& $r (u+k \alpha_4+l \alpha_5)+t (u \alpha_4+k \alpha_6+l \alpha_7)+y (u \alpha_5+k \alpha_7+l \alpha_8)$\\
    $\alpha^*_8$&$=$& $u^2+2 u (k \alpha_4+l \alpha_5)+k^2 \alpha_6+2 k l \alpha_7+l^2 \alpha_8$\\
    $\alpha^*_9$&$=$& $x^4$\\	\end{longtable}

By choosing 
$q = -x \alpha_1 - w \alpha_4 - e \alpha_5,$
$r = -t \alpha_4 - y \alpha_5$
and $u = -k \alpha_4 - l \alpha_5$, we have $\alpha_1^*=\alpha_4^*=\alpha_5^*=0.$
Now we can suppose that  $\alpha_1=\alpha_4=\alpha_5=0$.
Then:

\begin{enumerate}
    \item if $\alpha_8\neq 0$ and $ \alpha_7^2\neq \alpha_6 \alpha_8,$ by choosing 
    $q=0,$
    $w=- \alpha_3 \alpha_7+ \alpha_2 \alpha_8,$
    $e= \alpha_3 \alpha_6- \alpha_2 \alpha_7,$
    $r=0,$
    $t=1,$
    $y=\frac{- \alpha_7+\sqrt{ \alpha_7^2- \alpha_6 \alpha_8}}{ \alpha_8},$
    $u=0,$
    $k=\frac{ \alpha_8 ( \alpha_6 \alpha_8- \alpha_7^2)^3}{2},$
    $l=\frac{ \alpha_7^7 \alpha_8-3 \alpha_6 \alpha_7^5 \alpha_8^2+3 \alpha_6^2 \alpha_7^3 \alpha_8^3- \alpha_6^3 \alpha_7 \alpha_8^4+\sqrt{ \alpha_8^2 ( \alpha_7^2- \alpha_6 \alpha_8)^7}}{2 \alpha_8}$
    and 
    $x= \alpha_7^2- \alpha_6 \alpha_8,$ we have the representative $\langle \nabla_7+\nabla_9 \rangle;$
    
    \item if $\alpha_8\neq 0, $  $ \alpha_7^2= \alpha_6 \alpha_8$ and 
    $\alpha_3 \alpha_7\neq \alpha_2 \alpha_8,$
    by choosing 
    $q=0,$ 
    $w=0,$
    $e=-\frac{\alpha_3}{\alpha_8},$
    $r=0,$
    $t=\frac{\alpha_8}{\alpha_2 \alpha_8-\alpha_3 \alpha_7},$
    $y=\frac{\alpha_7}{\alpha_3 \alpha_7-\alpha_2 \alpha_8},$
    $u=0,$
    $k=0,$
    $l=\frac{1}{\sqrt{\alpha_8}}$
    and 
    $x=1,$ we have the representative
    $\langle \nabla_2+\nabla_8+\nabla_9 \rangle;$

    \item if $\alpha_8\neq 0, $  $ \alpha_7^2= \alpha_6 \alpha_8$ and 
    $\alpha_3 \alpha_7= \alpha_2 \alpha_8,$
    by choosing 
    $q=0,$ 
    $w=0,$
    $e=-\alpha_3,$
    $r=0,$
    $t=\alpha_8,$
    $y=-\alpha_7   ,$
    $u=0,$
    $k=0,$
    $l=\sqrt{\alpha_8^3}$
    and 
    $x=\alpha_8,$ we have the representative
    $\langle \nabla_8+\nabla_9 \rangle,$ which gives a split extension;
    
    \item if $\alpha_8=0,$ then by choosing some suitable automorphism, we can take an element with $\alpha_8^*\neq0.$ It is a case considered above. 
    
\end{enumerate}

Summarizing, all cases,  we have only two distinct orbits:
\begin{longtable}
{ccc}
$\langle \nabla_7+\nabla_9 \rangle$ & \ & 
$\langle \nabla_2+\nabla_8+\nabla_9 \rangle$ 
\end{longtable}

Hence we have the following $5$-dimensional algebras:
\begin{longtable}{llllllllll}
$\mathfrak{C}_{41}^5$& $ : $ & $e_1e_1=e_2$ & $e_2e_2=e_5$ &$e_3e_4=e_5$\\
$\mathfrak{C}_{42}^5$& $ : $ & $e_1e_1=e_2$ & $e_1e_3=e_5$ &$e_2e_2=e_5$ &$e_4e_4= e_5$
\end{longtable}

 \subsection{$1$-dimensional central extensions of ${\mathfrak{C}^{4*}_{02}}$.}
Here we will  collect all information about ${\mathfrak{C}_{02}^{4*}}:$

\begin{longtable}{|l|l|l|l|}
\hline
Algebra  & Multiplication & Cohomology &  Automorphisms  \\
\hline
${\mathfrak{C}}_{02}^{4*}$ &  
$\begin{array}{l}e_1 e_1 = e_2 \\ e_1 e_2=e_3 \end{array}$
& 
$\begin{array}{lcl}
{\rm H^2_{\mathfrak{J}}}({\mathfrak{C}}_{02}^{4*})&=&
\left\langle 
[\Delta_{13}+\Delta_{22}],
[\Delta_{14}],[\Delta_{24}],[\Delta_{44}]
\right\rangle \\
{\rm H^2_{\mathfrak{CCD}}}({\mathfrak{C}}_{02}^{4*})&=&
{\rm H^2_{\mathfrak{J}}}({\mathfrak{C}_{02}^{4*}}) \oplus \langle[\Delta_{13}]\rangle 
\end{array}$& 
$\phi=
	\begin{pmatrix}
	x &  0  &  0 & 0\\
	q &  x^2  &  0 & 0\\
	w &  2xq  & x^3 &  r\\
	e &  0 & 0 & t
	\end{pmatrix} 
	$\\
\hline
\end{longtable}

Let us use the following notations:
\begin{longtable}{ccccc}
	$\nabla_1 = [\Delta_{13}+\Delta_{22}]$&$ \nabla_2 = [\Delta_{14}] $&$ \nabla_3 = [\Delta_{24}] $&$\nabla_4 =[\Delta_{44}]$&$	\nabla_5 = [\Delta_{13}]$
	\end{longtable}

Take $\theta=\sum\limits_{i=1}^5\alpha_{i}\nabla_{i}\in {\rm H}_{\mathfrak{CCD}}^2(\mathfrak{C}^{4*}_{02}).$
We are interested only in $\alpha_5\neq 0,$ $\alpha_1\neq -\alpha_5$ and 
$(\alpha_2, \alpha_3,\alpha_4)\neq (0,0,0),$ and can suppose that $\alpha_5=1$ and $\alpha_1\neq -1.$ 
	Since
	$$
	\phi^T\begin{pmatrix}
	0 & 0 & \alpha_1+1 & \alpha_2 \\
	0  &  \alpha_1 & 0 & \alpha_3 \\
	\alpha_1+1 & 0 & 0  & 0\\
	\alpha_2 & \alpha_3 & 0  & \alpha_4
	\end{pmatrix} \phi=
	\begin{pmatrix}
	\alpha^* & \alpha^{**} & \alpha^*_1+\alpha_5^* & \alpha^*_2 \\
	\alpha^{**}  &  \alpha^*_1 & 0 & \alpha^*_3 \\
	\alpha^*_1+\alpha_5^* & 0 & 0   & 0\\
	\alpha^*_2 & \alpha^*_3& 0 & \alpha^*_4
	\end{pmatrix},
	$$ 
we have
	\begin{longtable}{lcllcllcl}
$\alpha^*_1$ &$=$&$x^4 \alpha_1$ &
$\alpha^*_2$ &$=$&$r x (1+\alpha_1)+t (x \alpha_2+q \alpha_3+e \alpha_4)$ &
$\alpha^*_3$ &$=$&$t x^2 \alpha_3$ \\
$\alpha^*_4$ &$=$&$t^2 \alpha_4$&
$\alpha^*_5$ &$=$&$x^4$ 
	\end{longtable}
	
Hence, we have the following cases.
\begin{enumerate}
\item If $\alpha_4\neq 0,$
then by choosing 
$q=0,$
$w=0,$
$e=-\alpha_2,$
$r=0,$
$t=\sqrt{\alpha_4^3}$
and 
$x=\alpha_4,$
we have the following family of representatives
$\langle \alpha \nabla_1+\beta \nabla_3+\nabla_4+\nabla_5\rangle.$
If $(\alpha,\beta)\ne(\alpha',\beta')$, then 
\[\orb\langle \alpha \nabla_1+\beta \nabla_3+\nabla_4+\nabla_5\rangle
=\orb\langle \alpha' \nabla_1+\beta' \nabla_3+\nabla_4+\nabla_5\rangle
\] if and only if  $(\alpha',\beta')=(\alpha,-\beta).$

\item If $\alpha_4= 0$ and $\alpha_3\neq0,$
then by choosing 
$q=-\alpha_2,$
$w=0,$
$e=0,$
$r=0,$
$t=\alpha_3$
and 
$x=\alpha_3,$
we have the following family of distinct representatives
$\langle \alpha \nabla_1+\nabla_3+ \nabla_5\rangle.$

\item If $\alpha_4= 0$, $\alpha_3=0$,
then by choosing $r=-\frac{t\alpha_2}{\alpha_1+1},$  we have $(\alpha_2^*,\alpha_3^*,\alpha_4^*)=(0,0,0)$ and it is a split extension.

\end{enumerate}

Summarizing, all cases, we have the following distinct orbits:
\begin{longtable}{lllll}
$\langle \alpha \nabla_1+\nabla_3+ \nabla_5\rangle$ & \ & 
$\langle \alpha \nabla_1+\beta \nabla_3+\nabla_4+\nabla_5\rangle$  
\end{longtable}
	
Hence we have the following $5$-dimensional algebras:
\begin{longtable}{llllllllll}
$\mathfrak{C}_{12}^5(\alpha)$&$:$& 
$e_1e_1=e_2$ & $e_1e_2=e_3$ &$e_1e_3=(\alpha+1) e_5$ &$e_2e_2= \alpha e_5$  &$e_2e_4= e_5$\\
$\mathfrak{C}_{13}^5(\alpha, \beta)$&$:$& 
$e_1e_1=e_2$ & $e_1e_2=e_3$ &$e_1e_3=(\alpha+1) e_5$ &$e_2e_2= \alpha e_5$ &$e_2e_4= \beta e_5$ &$e_4e_4= e_5$\end{longtable}
where
$\mathfrak{C}_{13}^5(\alpha, \beta) \cong         
 \mathfrak{C}_{13}^5(\alpha,-\beta).$

 \subsection{$1$-dimensional central extensions of ${\mathfrak{C}^{4*}_{03}}$.}
Here we will  collect all information about ${\mathfrak{C}_{03}^{4*}}:$

\begin{longtable}{|l|l|l|}
\hline
Algebra  & Multiplication & Cohomology   \\
\hline
${\mathfrak{C}}_{03}^{4*}$ &  $  e_1 e_2 = e_3 $ & 
$\begin{array}{lcl}
{\rm H^2_{\mathfrak{J}}}({\mathfrak{C}}_{03}^{4*})&=&\langle [\Delta_{ij}]  \rangle_{1\leq i \leq j \leq 4}^{ (i,j) \not \in \{ (1,2), \ (3,3) \} } \\
{\rm H^2_{\mathfrak{CCD}}}({\mathfrak{C}}_{03}^{4*})&=&
{\rm H^2_{\mathfrak{J}}}({\mathfrak{C}_{03}^{4*}}) \oplus \langle[\Delta_{33}]\rangle 
\end{array}$\\
\hline
\multicolumn{3}{|l|}{ Automorphisms}\\
\hline	
\multicolumn{3}{|c|}{  
$
	\phi_1=
	\begin{pmatrix}
	x &  0  &  0 & 0\\
	0 &  e  &  0 & 0\\
	q &  r  & x e & y\\
	w &  t  & 0 &  u
	\end{pmatrix} \quad
	\phi_2=
	\begin{pmatrix}
	0 &  x  &  0 & 0\\
	e &  0  &  0 & 0\\
	q &  r  &  x e & y\\
	w &  t  &  0 &  u
	\end{pmatrix}
	$}\\
	
\hline
\end{longtable}
Let us use the following notations:
\begin{longtable}{ccc}
$\nabla_1 = [\Delta_{11}]$ & $\nabla_2 = [\Delta_{13}]$ & $\nabla_3 = [\Delta_{14}]$\\ $\nabla_4 = [\Delta_{22}]$ & $\nabla_5 = [\Delta_{23}]$ & $\nabla_6 = [\Delta_{24}]$\\ $\nabla_7 = [\Delta_{34}]$ & $\nabla_8 = [\Delta_{44}]$ & $\nabla_9 = [\Delta_{33}]$
	\end{longtable}
	
Take $\theta=\sum\limits_{i=1}^9\alpha_{i}\nabla_{i}\in {\rm H}_{\mathfrak{CCD}}^2(\mathfrak{C}^{4*}_{03}).$
We are interested only in $\alpha_9\neq 0$ and  
$(\alpha_3, \alpha_6,\alpha_7,\alpha_8)\neq (0,0,0,0),$ then can suppose $\alpha_9=1.$ 
	Since
	$$
	\phi_1^T\begin{pmatrix}
	\alpha_1 & 0 & \alpha_2 & \alpha_3 \\
	0  &  \alpha_4 & \alpha_5 & \alpha_6 \\
	\alpha_2 & \alpha_5 & 1   & \alpha_7\\
	\alpha_3 & \alpha_6 & \alpha_7  & \alpha_8
	\end{pmatrix} \phi_1=
	\begin{pmatrix}
	\alpha^*_1 & \alpha^* & \alpha^*_2 & \alpha^*_3 \\
	\alpha^*  &  \alpha^{*}_4 & \alpha^*_5 & \alpha^*_6 \\
	\alpha^*_2 & \alpha^*_5 & \alpha^*_9   & \alpha^*_7\\
	\alpha^*_3 & \alpha^*_6 & \alpha^*_7  & \alpha^*_8
	\end{pmatrix},
	$$ 
we have
	\begin{longtable}{lcl}
$\alpha_1^*$ & $=$ & $q^2+x^2 \alpha_1+2 w x \alpha_3+2 q (x \alpha_2+w \alpha_7)+w^2 \alpha_8$\\
$\alpha^*_2$ & $=$ & $e x (q+x \alpha_2+w \alpha_7)$\\
$\alpha^*_3$ & $=$ & $x y \alpha_2+u x \alpha_3+w y \alpha_7+q (y+u \alpha_7)+u w \alpha_8$\\
$\alpha^*_4$ & $=$ & $r^2+e^2 \alpha_4+2 e t \alpha_6+2 r (e \alpha_5+t \alpha_7)+t^2 \alpha_8$\\
$\alpha^*_5$ & $=$ & $e x (r+e \alpha_5+t \alpha_7)$\\
$\alpha^*_6$ & $=$ & $e y \alpha_5+e u \alpha_6+t y \alpha_7+r (y+u \alpha_7)+t u \alpha_8$\\
$\alpha^*_7$ & $=$ & $e x (y+u \alpha_7)$\\
$\alpha^*_8$ & $=$ & $y^2+2 u y \alpha_7+u^2 \alpha_8$\\
$\alpha^*_9$ & $=$ & $e^2 x^2$
	\end{longtable}

By choosing 
$q = -x \alpha_2 - w \alpha_7,$
$r = -e \alpha_5 - t \alpha_7$
and
$y = -u \alpha_7,$ we have $\alpha_2^*=\alpha_5^*=\alpha_7^*=0.$
Now we can suppose that  $\alpha_2=\alpha_5=\alpha_7=0$ and $\alpha_9=1.$
Then:
\begin{enumerate}
    \item if $\alpha_8\neq 0,$  $\alpha_3^2\neq \alpha_1 \alpha_8$ and $\alpha_6^2\neq\alpha_4 \alpha_8,$
    by choosing 
    $q=0,$
    $w=-\frac{\alpha_3}{\alpha_8} \sqrt{\frac{\alpha_4 \alpha_8-\alpha_6^2}{\alpha_8}},$
    $e=\sqrt{\frac{\alpha_1 \alpha_8-\alpha_3^2}{\alpha_8}},$
    $r=0,$
    $t= - \frac{\alpha_6}{\alpha_8}  \sqrt{\frac{\alpha_1 \alpha_8-\alpha_3^2}{\alpha_8}},$
    $u=\sqrt{\frac{(\af_1 \af_8-\af_3^2)(\af_4 \af_8-\af_6^2)}{ \af_8^3}}$
    and
    $x=\sqrt{\frac{\alpha_4 \alpha_8-\alpha_6^2}{\alpha_8}},$
    we have the representative 
    $\langle \nabla_1+\nabla_4+\nabla_8+\nabla_9\rangle;$

  \item if $\alpha_8\neq 0,$  $\alpha_3^2\neq \alpha_1 \alpha_8$ and $\alpha_6^2=\alpha_4 \alpha_8,$
    by choosing 
    $q=0,$
    $w=-\alpha_3,$
    $e=\sqrt{\frac{\alpha_1 \alpha_8-\alpha_3^2}{\alpha_8}},$
    $r=0,$
    $t=  -\frac{\alpha_6}{\alpha_8}  \sqrt{\frac{\alpha_1 \alpha_8-\alpha_3^2}{\alpha_8}},$
    $u=\sqrt{\alpha_1 \alpha_8-\alpha_3^2}$
    and
    $x= \alpha_8,$
    we have the representative 
    $\langle \nabla_1 +\nabla_8+\nabla_9\rangle;$

  \item if $\alpha_8\neq 0,$  $\alpha_3^2= \alpha_1 \alpha_8$ and $\alpha_6^2 \neq \alpha_4 \alpha_8,$
    by choosing 
    $q=0,$
    $w=-\frac{\alpha_3}{\alpha_8} \sqrt{\frac{\af_4\af_8-\af_6^2}{\af_8}},$
    $e= {\alpha_8},$
    $r=0,$
    $t=-\alpha_6,$
    $u=\sqrt{\af_4\af_8-\af_6^2}$
    and
    $x= \sqrt{\frac{\af_4\af_8-\af_6^2}{\af_8}},$
    we have the representative 
    $\langle \nabla_4 +\nabla_8+\nabla_9\rangle,$
    which after action of some suitable automorphism $\phi_2$ gives     $\langle \nabla_1 +\nabla_8+\nabla_9\rangle;$

  \item if $\alpha_8\neq 0,$  $\alpha_3^2= \alpha_1 \alpha_8$ and $\alpha_6^2 = \alpha_4 \alpha_8,$
    by choosing 
    $q=0,$
    $w=- {\alpha_3},$
    $e= {\alpha_8},$
    $r=0,$
    $t=-\alpha_6,$
    $u=\sqrt{\alpha_8^3}$
    and
    $x=  {\af_8},$
    we have the representative 
    $\langle \nabla_8+\nabla_9\rangle;$

\item if $\alpha_8= 0,$  $\alpha_6\neq  0$ and $\af_3\neq0,$ by choosing 
    $q=0,$
    $w=-\af_1 \af_6^2,$
    $e=2 \af_3^2 \af_6,$
    $r=0,$
    $t=-\af_3^2 \af_4,$
    $y=0,$
    $u=8 \af_3^4 \af_6^4$
    and
    $x=2 \af_3 \af_6^2,$
    we have the representative     $\langle \nabla_3+\nabla_6+  \nabla_9\rangle;$

\item if $\alpha_8= 0,$  $\alpha_6\neq  0,$ $\af_3= 0$ and $\af_1\neq 0,$ by choosing 
    $q=0,$
    $w=0,$
    $e=\sqrt{\alpha_1},$
    $r=0,$
    $t=-\frac{\sqrt{\alpha_1} \af_4}{2\alpha_6},$
    $y=0,$
    $u=\frac{\sqrt{\alpha_1}}{\af_6}$ and 
    $x=1,$ we have the representative
    $\langle \nabla_1+\nabla_6+  \nabla_9\rangle$;

\item if $\alpha_8= 0,$  $\alpha_6\neq  0,$ $\af_3\neq 0$ and $\af_1= 0,$ by choosing 
    $q=0,$
    $w=0,$
    $e= \af_6,$
    $r=0,$
    $t=-\frac{\af_4}{2},$
    $y=0,$ $u=1$ and 
    $x=1,$ we have the representative $\langle \nabla_6+  \nabla_9\rangle$;
    
\item if $\alpha_8= 0$ and  $\alpha_6= 0,$ then $\alpha_3\neq 0$ (in the oposite case, we have a split extension). Then  by choosing some suitable automorphism $\phi_2$ we have $\alpha_6^*\neq0$ and $\alpha_8^*=0.$ It gives a case considered above.

\end{enumerate}

Summarizing, all cases, we have the following distinct orbits:

\begin{longtable}{llll}
$\langle \nabla_1+\nabla_6+  \nabla_9\rangle$ &
$\langle \nabla_1+\nabla_4+\nabla_8+\nabla_9\rangle$&
$\langle \nabla_1+ \nabla_8+\nabla_9\rangle$ \\
$\langle \nabla_3+\nabla_6+  \nabla_9\rangle$ &
$\langle \nabla_6+  \nabla_9\rangle$ &
$\langle \nabla_8+ \nabla_9\rangle$
\end{longtable}

 Hence we have the following $5$-dimensional algebras:
\begin{longtable}{llllllllll}
$\mathfrak{C}_{43}^5$& $ : $ & $e_1e_2=e_3$ & $e_1e_1=e_5$ & $e_2e_4=e_5$ & $e_3e_3=e_5$\\
$\mathfrak{C}_{44}^5$& $ : $ & $e_1e_2=e_3$ & $e_1e_1=e_5$ & $e_2e_2=e_5$ & $e_3e_3=e_5$& $e_4e_4=e_5$\\
$\mathfrak{C}_{45}^5$& $ : $ & $e_1e_2=e_3$ & $e_1e_1=e_5$ & $e_3e_3=e_5$ & $e_4e_4=e_5$\\
$\mathfrak{C}_{46}^5$& $ : $ & $e_1e_2=e_3$ & $e_1e_4=e_5$ & $e_2e_4=e_5$ & $e_3e_3=e_5$\\
$\mathfrak{C}_{47}^5$& $ : $ & $e_1e_2=e_3$ & $e_2e_4=e_5$ & $e_3e_3=e_5$ \\
$\mathfrak{C}_{48}^5$& $ : $ & $e_1e_2=e_3$ & $e_3e_3=e_5$ & $e_4e_4=e_5$\\
\end{longtable}

 \subsection{$1$-dimensional central extensions of ${\mathfrak{C}_{04}^{4*}}$}  
 Here we will  collect all information about ${\mathfrak{C}_{04}^{4*}}:$

\begin{longtable}{|l|l|l|}
\hline
Algebra  & Multiplication & Cohomology    \\
\hline
${\mathfrak{C}}_{04}^{4*}$ &  
$\begin{array}{l}e_1 e_1 = e_3 \\ e_2 e_2=e_4 \end{array}$
& 
$\begin{array}{lcl}
{\rm H^2_{\mathfrak{J}}}({\mathfrak{C}}_{04}^{4*})&=&
\langle [\Delta_{12}],[\Delta_{13}],[\Delta_{14}],[\Delta_{23}],[\Delta_{24}] \rangle \\
 
{\rm H^2_{\mathfrak{CCD}}}({\mathfrak{C}}_{04}^{4*})&=&{\rm H^2_{\mathfrak{J}}}({\mathfrak{C}_{04}^{4*}}) \oplus 
\langle[\Delta_{33}],[\Delta_{34}],[\Delta_{44}]\rangle  
\end{array}$\\
\hline
\multicolumn{3}{|l|}{ Automorphisms}\\
\hline	
\multicolumn{3}{|c|}{  
	$\phi_1=
	\begin{pmatrix}
	x &  0  &  0 & 0\\
	0 &  e  &  0 & 0\\
	q &  r  & x^2 &  0\\
	w &  t  & 0 & e^2
	\end{pmatrix}  \quad 
	\phi_2=
	\begin{pmatrix}
	0 &  x  &  0 & 0\\
	e &  0  &  0 & 0\\
	q &  r  & 0 &  x^2\\
	w &  t  & e^2 & 0
	\end{pmatrix}$}\\
	
\hline
 
\end{longtable}

Let us use the following notations:
\begin{longtable}{llll}
$\nabla_1 = [\Delta_{12}]$ & $\nabla_2 = [\Delta_{13}]$ &
$\nabla_3 = [\Delta_{14}]$ & $\nabla_4 =[\Delta_{23}]$\\
$\nabla_5 = [\Delta_{24}]$ & $\nabla_6 =[\Delta_{33}]$&
$\nabla_7 = [\Delta_{34}]$ & $\nabla_8 =[\Delta_{44}]$
\end{longtable}
Take $\theta=\sum\limits_{i=1}^8\alpha_{i}\nabla_{i}\in {\rm H_{\mathfrak{CCD}}^2}(\mathfrak{C}^{4*}_{04}).$
We are interested only in the cases with 
$(\alpha_6,\alpha_7,\alpha_8) \neq (0,0,0)$, $(\alpha_2,\alpha_4,\alpha_6, \alpha_7) \neq (0,0,0,0)$
and $(\alpha_3,\alpha_5,\alpha_7, \alpha_8) \neq (0,0,0,0).$
Since
 	$$
	\phi_1^T\begin{pmatrix}
	0 & \alpha_1 & \alpha_2 & \alpha_3 \\
	\alpha_1  &  0 & \alpha_4 & \alpha_5 \\
	\alpha_2 & \alpha_4 & \alpha_6   & \alpha_7\\
	\alpha_3 & \alpha_5 & \alpha_7  & \alpha_8
	\end{pmatrix} \phi_1=
	\begin{pmatrix}
	\alpha^* & \alpha_1^* & \alpha^*_2 & \alpha^*_3 \\
	\alpha^*_1  &  \alpha^{**} & \alpha^*_4 & \alpha^*_5 \\
	\alpha^*_2 & \alpha^*_4 & \alpha^*_6   & \alpha^*_7\\
	\alpha^*_3 & \alpha^*_5 & \alpha^*_7  & \alpha^*_8
	\end{pmatrix},
	$$ 
we have
	\begin{longtable}{lcl}
	$\alpha^*_1$ &$=$&$e (x  \af_1+q  \af_4+w  \af_5)+r (x  \af_2+q  \af_6+w  \af_7)+t (x  \af_3+q  \af_7+w  \af_8)$ \\
	$\alpha^*_2$ &$=$&$x^2 (x  \af_2+q  \af_6+w  \af_7)$ \\
	$\alpha^*_3$ &$=$&$e^2 (x  \af_3+q  \af_7+w  \af_8)$ \\
	$\alpha^*_4$ &$=$&$x^2 (e  \af_4+r  \af_6+t  \af_7)$ \\
	$\alpha^*_5$ &$=$&$e^2 (e  \af_5+r  \af_7+t  \af_8)$ \\
	$\alpha^*_6$ &$=$&$x^4  \af_6$\\
	$\alpha^*_7$ &$=$&$e^2 x^2  \af_7$\\
	$\alpha^*_8$ &$=$&$e^4  \af_8$
	\end{longtable}

Then we consider the following cases. 
\begin{enumerate}
    \item If $\alpha_8\neq 0,$   $\alpha_7\neq0$ and $\af_7^2\neq \af_6 \af_8,$ then
     choosing 
    $q=\frac{x (\af_2 \af_8-\af_3 \af_7)}{\af_7^2-\af_6 \af_8},$
    $w=\frac{x (\af_3 \af_6-\af_2 \af_7)}{\af_7^2-\af_6 \af_8},$
    $e=x \sqrt{\frac{\af_7}{\af_8}},$
    $r=\frac{x \sqrt{\af_7} (\af_4 \af_8-\af_5 \af_7)}{\sqrt{\af_8} (\af_7^2-\af_6 \af_8)}$
    and
    $t=\frac{x \sqrt{\af_7} (\af_5 \af_6-\af_4 \af_7)}{\sqrt{\af_8} (\af_7^2-\af_6 \af_8)},$
    we have two families of distinct representatives 
    \[\langle \nabla_1+\af \nabla_6+\nabla_7+\nabla_8 \rangle_{\af\neq 1} \mbox{ and }
    \langle \af \nabla_6+\nabla_7+\nabla_8 \rangle_{\af\neq 1}\] depending on 
    $\af_3 \af_5 \af_6-\af_3 \af_4 \af_7-\af_2 \af_5 \af_7+\af_1 \af_7^2+\af_2 \af_4 \af_8-\af_1 \af_6 \af_8=0$ or not.

\item  If  $\alpha_8\neq 0,$   $\alpha_7\neq0,$   $\af_7^2= \af_6 \af_8$ and 
$ \af_5 \af_7\neq \af_4 \af_8,$ then 
     choosing 
$q=\frac{\af_3 \af_5^2 \af_7^2-\af_1 \af_5 \af_7^2\af_8+\af_1 \af_4\af_7 \af_8^2-\af_2 \af_4 \af_5 \af_8^2}{\sqrt{\af_7^5 \af_8} (\af_5 \af_7-\af_4 \af_8)},$
$w=\frac{\af_2 \af_4 \af_5 \af_8+\af_1 \af_7 (\af_5 \af_7-\af_4 \af_8)+\af_3 \af_4 (-2 \af_5 \af_7+\af_4 \af_8)}{\sqrt{\af_7^3\af_8} (\af_5 \af_7-\af_4 \af_8)},$
$e=\frac{\af_4\af_8-\af_5\af_7}{\af_7 \af_8},$
$r=-\frac{\af_4 \af_5}{\af_7^2},$
$t=\frac{\af_5^2}{\af_8^2}$
and
$x=\frac{\af_4 \af_8-\af_5 \af_7}{\sqrt{\af_7^3\af_8}},$
we have the family of representatives
$\langle \af \nabla_2+ \nabla_4+ \nabla_6+\nabla_7+\nabla_8 \rangle.$
Observe that for $\af \ne \af'$ and $\af \ne 0$ we have 
\[\orb\langle \af \nabla_2+ \nabla_4+ \nabla_6+\nabla_7+\nabla_8 \rangle=
  \orb\langle  \af' \nabla_2+ \nabla_4+ \nabla_6+\nabla_7+\nabla_8 \rangle\] if and only if either $\af'= \pm \af^{-1}$ or   $\af'= - \af.$

\item   If $\alpha_8\neq 0,$   $\alpha_7\neq0,$   $\af_7^2= \af_6 \af_8,$  $ \af_5 \af_7= \af_4 \af_8$
and $\af_3 \af_7 \neq \af_2 \af_8,$ then 
choosing 
    $q=\frac{\af_3^2}{\af_7^2},$
    $w=-\frac{\af_2 \af_3}{\af_7^2},$
    $e=\frac{\af_2 \af_8-\af_3 \af_7}{\sqrt{\af_7^3\af_8}},$
    $r=\frac{\af_3 \af_5-\af_1 \af_8}{\sqrt{\af_7^3\af_8}},$
    $t=\frac{\af_1 \af_7-\af_2 \af_5}{ \sqrt{\af_7^3\af_8}}$
    and 
    $x=\frac{\af_2 \af_8-\af_3 \af_7}{\af_7^2},$ we have the representative
$\langle   \nabla_2+ \nabla_6+\nabla_7+\nabla_8 \rangle$
and after action of some suitable automorphism $\phi_2,$
we have the representative
$\langle   \nabla_4+ \nabla_6+\nabla_7+\nabla_8 \rangle$, which found above.

\item  If  $\alpha_8\neq 0,$   $\alpha_7\neq0,$   $\af_7^2= \af_6 \af_8,$  $ \af_5 \af_7= \af_4 \af_8$
and $\af_3 \af_7 = \af_2 \af_8,$ then choosing 
$q=0,$ $w=-\frac{e \af_3}{\sqrt{\af_7\af_8}},$
$r=0,$ $t=-\frac{e\af_5}{\af_8}$
and 
$x=e \sqrt{\frac{\af_8}{\af_7}},$
we have two representatives 
$\langle   \nabla_1+ \nabla_6+\nabla_7+\nabla_8 \rangle$ and
$\langle \nabla_6+\nabla_7+\nabla_8 \rangle,$
depending in $\af_3 \af_5=\af_1 \af_8,$ or not.
These orbits will be joined to families which found in case (1).

    \item If $\alpha_8\neq 0,$   $\alpha_7= 0$ and $\af_6\neq0,$ then  
     choosing 
    $q=-\frac{x \af_2}{\af_6},$
    $w=-\frac{x \af_3}{\af_8},$
    $e=\frac{x \sqrt[4]{\af_6}}{\sqrt[4]{\af_8}},$
    $r=-\frac{x \af_4}{\sqrt[4]{\af_6^3 \af_8}}$
    and
    $t=-\frac{x \af_5 \sqrt[4]{\af_6}}{\sqrt[4]{\af_8^5}},$
    we have two  representatives 
    $\langle \nabla_1+ \nabla_6+\nabla_8 \rangle$ and 
    $\langle   \nabla_6+\nabla_8 \rangle$ depending on 
    $\af_1 \af_6 \af_8=\af_3 \af_5 \af_6+\af_2 \af_4 \af_8$ or not.

    \item If $\alpha_8\neq 0,$   $\alpha_7= 0$ and  $\af_6=0,$ then  
     choosing 
    $w=-\frac{x \af_3}{\af_8},$
    $t=-\frac{e \af_5}{\af_8},$ we have $\af_3*=\af_5*=0,$ and:
    
    \begin{enumerate}
        \item if $\af_2\neq 0,$ then choosing
        $q=0,$ 
        $e=-\sqrt[4]{\frac{x^3 \af_2}{\af_8}},$
        $r=\frac{x(\af_1 \af_8- \af_3 \af_5)}{\sqrt[4]{x  \af_2^3  \af_8^5}},$
        we have two representatives 
            $\langle \nabla_2+ \nabla_4+\nabla_8 \rangle$ and  
            $\langle \nabla_2+ \nabla_8 \rangle,$ depending on $\af_4=0,$ or not;
            
            \item if $\af_2=0$ and $\af_4\neq 0,$ then  choosing
   $x=\af_4 \sqrt{\af_8},$
            $q=\frac{\af_3 \af_5-\af_1 \af_8}{\sqrt{\af_8}},$
            $e=\af_4$ and $r=0,$ we have   $\langle  \nabla_4+\nabla_8 \rangle;$
             \item if $\af_2=0$ and $\af_4=0,$ then we have split extensions.
            
            \end{enumerate}
      
    \item If $\af_8=0$ and $\af_6\neq 0,$ then by choosing some suitable automorphism $\phi_2$ we have $\af^*_8\neq 0$ and we have a case considered above.
    
    \item If $\af_8=0,$ $\af_7\neq0$ and $\af_6=0,$ then choosing 
    $q=-\frac{x \af_3}{\af_7},$
    $w=-\frac{x \af_2}{\af_7},$
    $r=-\frac{e \af_5}{\af_7}$
    and 
    $t=-\frac{e \af_4}{\af_7}$
    we have two representatives 
    $\langle \nabla_1+ \nabla_7 \rangle$ and   $\langle  \nabla_7 \rangle$ depending on 
    $\af_3 \af_4+\af_2 \af_5\neq \af_1 \af_7$ or not.

\end{enumerate}

Summarizing, all cases we have the following representatives of distinct orbits:
\begin{longtable}{lll}
$\langle \nabla_1+\af \nabla_6+\nabla_7+\nabla_8 \rangle$ & 
$\langle \nabla_1+ \nabla_6+\nabla_8 \rangle$  &
$\langle \nabla_1+ \nabla_7 \rangle$  \\

\multicolumn{3}{l}{$\langle \af \nabla_2+ \nabla_4+ \nabla_6+\nabla_7+\nabla_8 \rangle^{O(\af)=O(-\af)=O(\af^{-1})=O(-\af^{-1}), \ \af\neq 0}$ }\\

$\langle \nabla_2+ \nabla_4+\nabla_8 \rangle$ &
$\langle \nabla_2+ \nabla_8 \rangle$ &
$\langle \nabla_4+ \nabla_8 \rangle$ \\

$\langle \af \nabla_6+\nabla_7+\nabla_8 \rangle$&
$\langle \nabla_6+ \nabla_8 \rangle$ &
$\langle \nabla_7 \rangle$

\end{longtable}

Hence we have the following $5$-dimensional algebras:
\begin{longtable}{llllllllll}
 $\mathfrak{C}_{49}^5(\af)$& $ : $ & $e_1e_1=e_3$ & $e_1e_2=e_5$ &$e_2e_2=e_4$ & $e_3e_3=\af e_5$ & $e_3e_4=e_5$& $e_4e_4=e_5$ \\
 $\mathfrak{C}_{50}^5$& $ : $ & $e_1e_1=e_3$  & $e_1e_2=e_5$ &$e_2e_2=e_4$ & $e_3e_3=e_5$ & $e_4e_4=e_5$\\
 $\mathfrak{C}_{51}^5$& $ : $ & $e_1e_1=e_3$  & $e_1e_2=e_5$ &$e_2e_2=e_4$ & $e_3e_4=e_5$\\
 $\mathfrak{C}_{52}^5(\af)$& $ : $ & $e_1e_1=e_3$  & $e_1e_3=\af e_5$ &$e_2e_2=e_4$ &$e_2e_3=e_5$ &$e_3e_3=e_5$
 &$e_3e_4=e_5$ &$e_4e_4=e_5$\\
 $\mathfrak{C}_{53}^5$& $ : $ & $e_1e_1=e_3$  & $e_1e_3=e_5$ &$e_2e_2=e_4$ & $e_2e_3=e_5$ & $e_4e_4=e_5$\\
 $\mathfrak{C}_{54}^5$& $ : $ & $e_1e_1=e_3$  & $e_1e_3=e_5$ &$e_2e_2=e_4$ & $e_4e_4=e_5$\\
 $\mathfrak{C}_{55}^5$& $ : $ & $e_1e_1=e_3$  & $e_2e_2=e_4$ &$e_2e_3=e_5$ & $e_4e_4=e_5$\\
 $\mathfrak{C}_{56}^5(\af)$& $ : $ & $e_1e_1=e_3$  &$e_2e_2=e_4$ & $e_3e_3=\alpha e_5$ & $e_3e_4=e_5$ & $e_4e_4=e_5$\\
 $\mathfrak{C}_{57}^5$& $ : $ & $e_1e_1=e_3$  &$e_2e_2=e_4$ & $e_3e_3=e_5$ & $e_4e_4=e_5$\\
 $\mathfrak{C}_{58}^5$& $ : $ & $e_1e_1=e_3$  &$e_2e_2=e_4$ & $e_3e_4=e_5$
 \end{longtable}

 \subsection{$1$-dimensional central extensions of ${\mathfrak{C}^{4*}_{05}}$.}
Here we will  collect all information about ${\mathfrak{C}_{05}^{4*}}:$

\begin{longtable}{|l|l|l|l|}
\hline
Algebra  & Multiplication & Cohomology &  Automorphisms  \\
\hline
${\mathfrak{C}}_{05}^{4*}$ &  
$\begin{array}{l}e_1 e_1 = e_3 \\ e_1 e_2=e_4 \end{array}$
& 
$\begin{array}{lcl}
{\rm H^2_{\mathfrak{J}}}({\mathfrak{C}}_{05}^{4*})&=&
\left\langle 
[\Delta_{13}],[\Delta_{14}],[\Delta_{22}],[\Delta_{23}],[\Delta_{24}] \right\rangle \\
{\rm H^2_{\mathfrak{CCD}}}({\mathfrak{C}}_{05}^{4*})&=&
{\rm H^2_{\mathfrak{J}}}({\mathfrak{C}_{05}^{4*}}) \oplus 
\langle[\Delta_{33}],[\Delta_{34}],[\Delta_{44}]\rangle
\end{array}$& 
$\phi=
	\begin{pmatrix}
	x &  0  &  0 & 0\\
	q &  r  &  0 & 0\\
	w &  t  & x^2 & 0\\
	e &  y  & 2xq & xr
	\end{pmatrix} 
	$\\
\hline
\end{longtable}

Let us use the following notations:
\begin{longtable}{llll}
$\nabla_1 = [\Delta_{13}]$ & $\nabla_2 = [\Delta_{14}]$&
$\nabla_3 = [\Delta_{22}]$ & $\nabla_4 = [\Delta_{23}]$\\
$\nabla_5 = [\Delta_{24}]$ & $\nabla_6 = [\Delta_{33}]$&
$\nabla_7 = [\Delta_{34}]$ & $\nabla_8 = [\Delta_{44}]$
\end{longtable}
We are interested only in the cases with 
$(\alpha_6,\alpha_7,\alpha_8) \neq (0,0,0)$, $(\alpha_1,\alpha_4,\alpha_6, \alpha_7) \neq (0,0,0,0)$
and $(\alpha_2,\alpha_5,\alpha_7, \alpha_8) \neq (0,0,0,0).$
	Take $\theta=\sum\limits_{i=1}^8\alpha_{i}\nabla_{i}\in {\rm H_{\mathfrak{CCD}}^2}(\mathfrak{C}^{4*}_{05}).$
	Since
 	$$
	\phi^T\begin{pmatrix}
	0 & 0 & \alpha_1 & \alpha_2 \\
	0  &  \alpha_3 & \alpha_4 & \alpha_5 \\
	\alpha_1 & \alpha_4 & \alpha_6   & \alpha_7\\
	\alpha_2 & \alpha_5 & \alpha_7  & \alpha_8
	\end{pmatrix} \phi=
	\begin{pmatrix}
	\alpha^* & \alpha^{**} & \alpha^*_1 & \alpha^*_2 \\
	\alpha^{**}  &  \alpha^{*}_3 & \alpha^*_4 & \alpha^*_5 \\
	\alpha^*_1 & \alpha^*_4 & \alpha^*_6   & \alpha^*_7\\
	\alpha^*_2 & \alpha^*_5 & \alpha^*_7  & \alpha^*_8
	\end{pmatrix},
	$$ 
we have
\begin{longtable}{lcl}
$\af^*_1$ & $ = $ & $  x (x^2 \af_1+x (q (2 \af_2+\af_4)+w \af_6+e \af_7)+2 q (q \af_5+w \af_7+e \af_8))$\\
$\af^*_2$ & $ = $ & $ r x (x \af_2+q \af_5+w \af_7+e \af_8)$\\
$\af^*_3$ & $ = $ & $ r^2 \af_3+2 r (t \af_4+y \af_5)+t^2 \af_6+2 t y \af_7+y^2 \af_8$\\
$\af^*_4$ & $ = $ & $ x (r x \af_4+2 q r \af_5+t x \af_6+2 q t \af_7+x y \af_7+2 q y \af_8)$\\
$\af^*_5$ & $ = $ & $ r x (r \af_5+t \af_7+y \af_8)$\\
$\af^*_6$ & $ = $ & $ x^2 (x^2 \af_6+4 q x \af_7+4 q^2 \af_8)$\\
$\af^*_7$ & $ = $ & $ r x^2 (x \af_7+2 q \af_8)$\\
$\af^*_8$ & $ = $ & $ r^2 x^2 \af_8$
\end{longtable}

Then we consider the following cases.
\begin{enumerate}
    \item If $\af_8\neq 0$ and $\af_7^2\neq\af_6 \af_8,$ then choosing 
    $q=-\frac{x \af_7}{2 \af_8},$
    $w=\frac{x (\af_5 \af_7^2+\af_8 (-2 \af_2 \af_7-\af_4 \af_7+2 \af_1 \af_8))}{2 \af_8 (\af_7^2-\af_6 \af_8)},$
    $e=\frac{x (\af_5 \af_6 \af_7-\af_4 \af_7^2-2 \af_2 \af_6 \af_8+2 \af_1 \af_7 \af_8)}{2 \af_8 (-\af_7^2+\af_6 \af_8)},$
    $r=\frac{x \sqrt{\af_6 \af_8-\af_7^2}}{\af_8},$
    $t=\frac{-r \af_5 \af_7+r \af_4 \af_8+\sqrt{r^2 \af_8 (\af_5^2 \af_6-2 \af_4 \af_5 \af_7+\af_3 \af_7^2+\af_4^2 \af_8-\af_3 \af_6 \af_8)}}{\af_7^2-\af_6 \af_8}$
    and
    $y=\frac{-r \af_5 \af_6 \af_8+r \af_4 \af_7 \af_8+\af_7\sqrt{r^2 \af_8 (\af_5^2 \af_6-2 \af_4 \af_5 \af_7+\af_3 \af_7^2+\af_4^2 \af_8-\af_3 \af_6 \af_8)}}{\af_8 (-\af_7^2+\af_6 \af_8)},$
    we have two representatives
    $\langle \nabla_4+\nabla_6+\nabla_8 \rangle$ and  $\langle \nabla_6+\nabla_8 \rangle,$
    depending on $\af_5^2 \af_6-2 \af_4 \af_5 \af_7+\af_4^2 \af_8+\af_3 (\af_7^2-\af_6 \af_8)=0$ or not.
    Note that $\langle \nabla_6+\nabla_8\rangle$ gives a trivial extension.

    \item If $\af_8\neq 0,$ $\af_7^2=\af_6 \af_8$ and 
    $\af_5 \af_7\neq\af_4 \af_8,$
    then choosing 
    $q=-\frac{x \af_7}{2 \af_8},$
    $e=\frac{x \af_5 \af_7-2 x \af_2 \af_8-2 w \af_7 \af_8}{2 \af_8^2},$
    $r=\frac{\af_4 \af_8-\af_5 \af_7}{\af_8^2},$
    $t=\frac{\af_5^2-\af_3 \af_8}{2 \af_8^2}$ 
    and
    $y=\frac{\af_5^2 \af_7-2 \af_4 \af_5 \af_8+\af_3 \af_7 \af_8}{2 \af_8^3},$
    we have two representatives
    $\langle \nabla_1+\nabla_4+\nabla_8 \rangle$ and  $\langle \nabla_4+\nabla_8 \rangle,$
    depending on $\af_5 \af_7^2+\af_8 (-2 \af_2 \af_7-\af_4 \af_7+2 \af_1 \af_8)=0$ or not.

    \item If $\af_8\neq 0,$ $\af_7^2=\af_6 \af_8,$  
    $\af_5 \af_7=\af_4 \af_8$ and $\af_5^2\neq\af_3 \af_8,$
    then choosing 
    $q=-\frac{\af_7 \sqrt{-\af_5^2+\af_3 \af_8}}{2 \af_8^2},$
    $w=0,$
    $e=\frac{(\af_5 \af_7-2 \af_2 \af_8) \sqrt{\af_3 \af_8-\af_5^2}}{2 \af_8^3},$
    $t=0,$
    $y=-\frac{r \af_5}{\af_8}$
    and 
    $x=\frac{\sqrt{\af_3 \af_8-\af_5^2}}{\af_8},$
    we have two representatives 
       $\langle \nabla_1+\nabla_3+\nabla_8 \rangle$ and  $\langle \nabla_3+\nabla_8 \rangle,$
    depending on $\af_2 \af_7\neq\af_1 \af_8$ or not. Note that $\langle \nabla_3+\nabla_8\rangle$ gives a trivial extension.

   \item If $\af_8\neq 0,$ $\af_7^2=\af_6 \af_8,$  
    $\af_5 \af_7=\af_4 \af_8$ and $\af_5^2=\af_3 \af_8,$
    then choosing 
    $q=-\frac{\af_7}{2 \af_8},$
    $w=0,$
    $e=\frac{ \af_5 \af_7-2  \af_2 \af_8}{2 \af_8^2},$
    $t=0,$
    $y=-\frac{r \af_5}{\af_8}$
    and 
    $x=1,$
    we have two representatives 
       $\langle \nabla_1+ \nabla_8 \rangle$ and  $\langle \nabla_8 \rangle,$
    depending on $\af_2 \af_7\neq\af_1 \af_8$ or not.
      Note that $\langle  \nabla_8\rangle$ gives a trivial extension.

    \item If $\af_8=0$ and $\af_7\neq 0,$ then  choosing 
    $q=-\af_6 \af_7^2,$
    $w=\af_7 (\af_5 \af_6-4 \af_2 \af_7),$
    $e=-\af_5 \af_6^2+\af_7 (4 \af_2 \af_6+\af_4 \af_6-4 \af_1 \af_7),$
    $t=-\frac{r \af_5}{\af_7},$
    $y=\frac{r (\af_5 \af_6-\af_4 \af_7)}{\af_7^2}$
    and
    $x=4 \af_7^3,$
    we have two representatives 
       $\langle \nabla_3+ \nabla_7 \rangle$ and  $\langle \nabla_7 \rangle,$
    depending on $\af_5^2 \af_6+\af_3 \af_7^2\neq 2 \af_4 \af_5 \af_7$ or not.
  
    \item If  $\af_8=0,$ $\af_7=0,$ $\af_6\neq 0$ and $\af_5\neq0,$
    then choosing 
    $q=-\af_2 \af_6,$
    $w=\af_2 \af_4-\af_1 \af_5,$
    $e=0,$
    $r=\af_5 \af_6^2,$
    $t=(2 \af_2-\af_4)\af_5 \af_6,$
    $y=\frac{\af_6(-4 \af_2^2+\af_4^2-\af_3 \af_6)}{2}$
    and 
    $x=\af_5 \af_6,$ we have the representative $\langle \nabla_5+ \nabla_6 \rangle.$
    
    \item If $\af_8=0,$ $\af_7=0,$ $\af_6\neq 0$ and $\af_5=0,$
    then choosing 
    $q=0,$
    $w=-\frac{x \af_1}{\af_6},$
    $e=0,$
    $y=0$
    and
    $t=-\frac{r \af_4}{\af_6},$
    we have three types of representatives
        $\langle \nabla_2+ \af \nabla_3+ \nabla_6 \rangle,$   $\langle \nabla_3 +\nabla_6 \rangle$
        or $\langle  \nabla_6 \rangle$
    depending on $\af_4^2=\af_3 \af_6$ and $\af_2=0$ or not.
    Note that $\langle  \nabla_6\rangle$ gives a trivial extension.

\end{enumerate}

Summarizing, all cases we have the following representatives of distinct orbits:
\begin{longtable}{lllll}
$\langle \nabla_1+ \nabla_3+\nabla_8 \rangle$ &
$\langle \nabla_1+ \nabla_4+\nabla_8 \rangle$ & 
$\langle \nabla_1+ \nabla_8 \rangle$   &
$\langle \nabla_2+ \af \nabla_3+ \nabla_6 \rangle$   &
$\langle \nabla_3+ \nabla_7 \rangle$  \\ 
$\langle \nabla_4+ \nabla_6+\nabla_8 \rangle$ &
$\langle \nabla_4+ \nabla_8 \rangle$&
$\langle \nabla_5+ \nabla_6 \rangle$&
$\langle \nabla_6+ \nabla_8 \rangle$&
$\langle \nabla_7  \rangle$
\end{longtable}

Hence we have the following $5$-dimensional algebras:
\begin{longtable}{llllllllll}
$\mathfrak{C}_{59}^5$& $ : $ & $e_1e_1=e_3$ & $e_1e_2=e_4$ &$e_1e_3=e_5$ &$e_2e_2=e_5$ &$e_4e_4=e_5$\\
$\mathfrak{C}_{60}^5$& $ : $ & $e_1e_1=e_3$ & $e_1e_2=e_4$ &$e_1e_3=e_5$ &$e_2e_3=e_5$ &$e_4e_4=e_5$\\
$\mathfrak{C}_{61}^5$& $ : $ & $e_1e_1=e_3$ & $e_1e_2=e_4$ &$e_1e_3=e_5$ &$e_4e_4=e_5$\\ 
$\mathfrak{C}_{62}^5(\af)$& $ : $ & $e_1e_1=e_3$ & $e_1e_2=e_4$&$e_1e_4=e_5$ &$e_2e_2=\af e_5$ &$e_3e_3=e_5$\\ 
$\mathfrak{C}_{63}^5$& $ : $ & $e_1e_1=e_3$ & $e_1e_2=e_4$ &$e_2e_2=e_5$ &$e_3e_4=e_5$\\
$\mathfrak{C}_{64}^5$& $ : $ & $e_1e_1=e_3$ & $e_1e_2=e_4$ &$e_2e_3=e_5$ &$e_3e_3=e_5$ &$e_4e_4=e_5$\\ 
$\mathfrak{C}_{65}^5$& $ : $ & $e_1e_1=e_3$ & $e_1e_2=e_4$ &$e_2e_3=e_5$ &$e_4e_4=e_5$\\
$\mathfrak{C}_{66}^5$& $ : $ & $e_1e_1=e_3$ & $e_1e_2=e_4$ &$e_2e_4=e_5$ &$e_3e_3=e_5$\\ 
$\mathfrak{C}_{67}^5$& $ : $ & $e_1e_1=e_3$ & $e_1e_2=e_4$ &$e_3e_3=e_5$ &$e_4e_4=e_5$\\ 
$\mathfrak{C}_{68}^5$& $ : $ & $e_1e_1=e_3$ & $e_1e_2=e_4$ &$e_3e_4=e_5$
\end{longtable}

  \subsection{$1$-dimensional central extensions of ${\mathfrak{C}_{06}^{4*}}$.}
Here we will  collect all information about ${\mathfrak{C}_{06}^{4*}}:$

\begin{longtable}{|l|l|l|l|}
\hline
Algebra  & Multiplication & Cohomology &  Automorphisms  \\
\hline
${\mathfrak{C}}_{06}^{4*}$ &  
$\begin{array}{l}e_1 e_1 = e_4 \\ e_2 e_3=e_4 \end{array}$
& 
$\begin{array}{lcl}
{\rm H^2_{\mathfrak{J}}}({\mathfrak{C}}_{06}^{4*})&=&
\langle [\Delta_{ij}]  \rangle_{1\leq i \leq j \leq 4}^{ (i,j) \not \in \{ (1,1), \ (4,4) \} } \\
{\rm H^2_{\mathfrak{CCD}}}({\mathfrak{C}}_{06}^{4*})&=&{\rm 
H^2_{\mathfrak{J}}}({\mathfrak{C}_{06}^{4*}}) \oplus 
\langle  [\Delta_{44}]\rangle
 \end{array}$& 
$\phi=
	\begin{pmatrix}
	x &  a  & p & 0\\
	y &  b  & q & 0\\
	z &  c  & r &  0\\
	t &  d  & s & x^2+2yz
	\end{pmatrix} 
	$\\
\hline
\multicolumn{4}{|c|}{
	where $xa+yc+zb=0,\ xp+yr+zq=0,\ a^2+2bc=0,\ p^2+2qr=0,\ x^2+2yz=ap+br+cq$ 
}\\
\hline
\end{longtable}

Let us use the following notations:
\begin{longtable}{lll}
$\nabla_1 = [\Delta_{12}]$ & $\nabla_2 = [\Delta_{13}]$ & $\nabla_3 = [\Delta_{14}]$\\ 
$\nabla_4 = [\Delta_{22}]$ & $\nabla_5 = [\Delta_{23}]$ & $\nabla_6 = [\Delta_{24}]$\\ 
$\nabla_7 = [\Delta_{33}]$ & $\nabla_8 = [\Delta_{34}]$ & $\nabla_9 = [\Delta_{44}]$
\end{longtable}
Take $\theta=\sum\limits_{i=1}^9\alpha_{i}\nabla_{i}\in {\rm H_{\mathfrak{CCD}}^2}(\mathfrak{C}_{06}^{4*}).$
We are interested only in $\alpha_9\neq 0$ and we suppose that $\af_9=1.$
Since
	$$
	\phi^T\begin{pmatrix}
	0 & \alpha_1 & \alpha_2 & \alpha_3 \\
	\alpha_1  &  \alpha_4 & \alpha_5 & \alpha_6 \\
	\alpha_2 & \alpha_5 & \alpha_7   & \alpha_8\\
	\alpha_3 & \alpha_6 & \alpha_8  & \alpha_9
	\end{pmatrix} \phi=
	\begin{pmatrix}
	\alpha^* & \alpha_1 & \alpha^*_2 & \alpha^*_3 \\
	\alpha^*_1  &  \alpha^{*}_4 & \alpha^{*}_5+\alpha^{*} & \alpha^*_6 \\
	\alpha^*_2 & \alpha^{*}_5+\alpha^{*} & \alpha^*_7   & \alpha^*_8\\
	\alpha^*_3 & \alpha^*_6 & \alpha^*_8  & \alpha^*_9
	\end{pmatrix},
	$$ 
we have
	\begin{longtable}{lcl}
$\af^*_1 $ & $ = $ & $x (b \af_1+c \af_2+d \af_3)+y (a \af_1+b \af_4+c \af_5+d \af_6)+$\\
&&$t (d+a \af_3+b \af_6+c \af_8)+z (a \af_2+b \af_5+c \af_7+d \af_8)$	\\ 

$\af^*_2 $ & $ = $ & $x (q \af_1+r \af_2+s \af_3)+y (p \af_1+q \af_4+r \af_5+s \af_6)+$\\
&&$t (s+p \af_3+q \af_6+r \af_8)+z (p \af_2+q \af_5+r \af_7+s \af_8)$\\

$\af^*_3 $ & $ = $ & $(x^2+2 y z) (t+x \af_3+y \af_6+z \af_8)$\\

$\af^*_4 $ & $ = $ & $d^2+2 a (b \af_1+c \af_2)+b^2 \af_4+2 b c \af_5+c^2 \af_7+2 d (a \af_3+b \af_6+c \af_8)$\\

$\af^*_5 $ & $ = $ & $-t^2+a (q \af_1+r \af_2+s \af_3)-2 x (y \af_1+z \af_2+t \af_3)-y^2 \af_4+b (p \af_1+q \af_4+r \af_5+s \af_6)-$\\
&&$2 y (z \af_5+t \af_6)-z^2 \af_7-2 t z \af_8+d (s+p \af_3+q \af_6+r \af_8)+c (p \af_2+q \af_5+r \af_7+s \af_8)$\\

$\af^*_6 $ & $ = $ & $(x^2+2 y z) (d+a \af_3+b \af_6+c \af_8)$\\

$\af^*_7 $ & $ = $ & $s^2+2 p (q \af_1+r \af_2)+q^2 \af_4+2 q r \af_5+r^2 \af_7+2 s (p \af_3+q \af_6+r \af_8)$\\

$\af^*_8 $ & $ = $ & $(x^2+2 y z) (s+p \af_3+q \af_6+r \af_8)$\\

$\af^*_9 $ & $ = $ & $(x^2+2 y z)^2.$
	\end{longtable}

Then by  choosing 
$d = -a\af_3 - b\af_6 - c\af_8,$
$t = -x\af_3 - y\af_6 - z\af_8,$
$s= -p\af_3 - q\af_6 - r\af_8$ and some suitable $a,b,c,d,x,y,z,p,q,s$ we have that 
$\af^*_3=0$, $\af_6^*=0$ and $\af_8^*=0.$ Now, we can suppose that 
$\af_3=0$, $\af_6=0,$  $\af_8=0.$
If  $\af_4 \neq 0$ and 

\begin{enumerate}
    \item  $(\af_1,\af_2,\af_5,\af_7)\neq (0,0,0,0),$ then choosing 
$a=-\frac{X}{2},$
$b=-\frac{1}{2},$
$c=\frac{X^2}{4},$
$p=-X,$
$q=1,$
$r=-\frac{X^2}{2},$
$x=0,$
$y=1,$
$z=\frac{X^2}{2},$
where $\af_7X^4-4  \af_2X^3-4  \af_5X^2+8 \af_1 X +4 \af_4=0,$
and 
$t=d=s=0,$
we have representative with $\af_4^*=0$ and $\af_3^*=\af_6^*=\af^*_8=0.$

   \item  $(\af_1,\af_2,\af_5,\af_7)= (0,0,0,0),$ then choosing 
   $a=b=p=r=y=z=t=d=s=0$ and $c=q=x=1,$ we have representative with $\af_4^*=0$ and $\af_3^*=\af_6^*=\af^*_8=0.$
\end{enumerate}

Now we can suppose that $\af_4=0,$ then

\begin{enumerate}
  
 \item if $\af_1\neq 0$   and $18 \af_1 \af_2 \af_5+2 \af_5^3\neq 27 \af_1^2 \af_7,$
 then  choosing
 $a=-\frac{\af_5}{3 x},$ 
 $b=-\frac{\af_5^2}{18 x \af_1},$
 $c=\frac{\af_1}{x},$
 $p=0,$
 $q=\frac{x^3}{\af_1},$
 $r=0,$
 $y=\frac{x \af_5}{3 \af_1},$
 $z=t=d=s=0$
 and 
 $x= 3^{-\frac{1}{2}} (-18 \af_1 \af_2 \af_5 - 2 \af_5^3 + 
 27 \af_1^2 \af_7)^{\frac{1}{6}},$ we have the family of representatives
 $\langle \af \nabla_1+ \nabla_2 +\nabla_4+\nabla_9\rangle;$
    
  \item if $\af_1\neq 0$   and $18 \af_1 \af_2 \af_5+2 \af_5^3= 27 \af_1^2 \af_7$
 then choosing   $a=-\frac{\af_5}{3 x},$  
 $b=-\frac{\af_5^2}{18 x \af_1},$
 $c=\frac{\af_1}{x},$
 $p=0,$
 $q=\frac{x^3}{\af_1},$
 $r=0,$
 $y=\frac{x \af_5}{3 \af_1}$
 and
 $z=t=d=s=0,$
 we have two representative 
  $\langle  \nabla_1+ \nabla_2  +\nabla_9\rangle$ and   $\langle   \nabla_2  +\nabla_9\rangle$
 depending on $6\af_1 \af_2 + \af_5^2=0$ or not;
 
 \item if $\af_1=0$ and $\af_5\neq 0,$
 then choosing 
 $a=\frac{c \af_2}{\af_5},$
 $b=-\frac{c \af_2^2}{2 \af_5^2},$
 $p=0,$
 $q=\frac{\af_5}{c},$
 $r=0,$
 $x=\sqrt{\af_5},$
 $y=-\sqrt{\af_2\af_5}$
 and
 $z=t=d=s=0,$  we have two representative 
  $\langle  \nabla_4+ \nabla_5  +\nabla_9\rangle$ and   $\langle   \nabla_5  +\nabla_9\rangle$
 depending on $\af_2^2+\af_5 \af_7=0$ or not.

\item if $\af_1=0$ and $\af_5= 0,$
then choosing 
 $b=-\frac{a^2}{2 c},$
 $p=0,$
 $q=\frac{1}{c},$
 $r=0,$
 $x=1,$
 $y=-\frac{a}{c}$
 and
 $z=t=d=s=0,$  we have two representative 
  $\langle  \nabla_1+ \nabla_9\rangle$,    $\langle   \nabla_4  +\nabla_9\rangle$ and 
    $\langle \nabla_9\rangle$
 depending on $\af_2=0$ and $\af_7=0$ or not.
Note that, the representative $\langle  \nabla_1+ \nabla_9\rangle$ under the action of an automorphism
with $a=b=p=r=y=z=t=d=s=0$ and $c=q=x=1$ gives  $\langle  \nabla_2+ \nabla_9\rangle.$

\end{enumerate}

Summarizing, we have the family of orbits
\[\langle \af \nabla_1+ \nabla_2 +\nabla_4+\nabla_9\rangle\]
and the following separated orbits:
 \begin{longtable}{lll}
$\langle \nabla_1+ \nabla_2  +\nabla_9\rangle$ &
$\langle \nabla_1+ \nabla_9\rangle$    &
$\langle \nabla_4+ \nabla_5  +\nabla_9\rangle$  \\
$\langle \nabla_4  +\nabla_9\rangle$  &
$\langle \nabla_5  +\nabla_9\rangle$ &
$\langle \nabla_9 \rangle$
\end{longtable}
It is easy to prove that these separated orbits are distinct and non-isomorphic to elements from the family.

In the family of orbits, all orbits are distinct, excepting
\[\langle \af \nabla_1+ \nabla_2 +\nabla_4+\nabla_9\rangle =
\langle \sqrt[3]{1} \af \nabla_1+ \nabla_2 +\nabla_4+\nabla_9\rangle \]

For prove it, we will suppose that there is an automorshism $\phi$ such that 
\[\phi \langle \af \nabla_1+ \nabla_2 +\nabla_4+\nabla_9\rangle =
\langle  \bt \nabla_1+ \nabla_2 +\nabla_4+\nabla_9\rangle \]
It is easy to see that $s=d=t=0$ and we consider the following cases:
\begin{enumerate}
    \item $q=0,$ then subsequently we have 
\[ p = 0, \ y = 0, \   z = 0, \ a = 0, \   c = 0, \ b = \frac{x^2}{r},   \ x^3 = r, \ r=\pm1 \mbox{ and }\af=\bt.\]

    \item $b=0,$ then $a=0$  and $\af_4^*=0,$ which gives a contradiction.
    
    \item $q\neq 0,$ $b\neq0$ and $y=0,$ then   subsequently we have 
    \[ p = -\frac{q z}{x}, \ 
a= -\frac{b z}{x}, \
r = -\frac{q z^2}{2 x^2}, \
c = -\frac{b z^2}{2 x^2},\]
which gives a contradiction, because $\det \phi =0.$ 
    
\item $q\neq 0,$  $b\neq 0$ and $y\neq0,$ then   subsequently we have 
\[ r = -\frac{p^2}{2 q}, \ c = -\frac{a^2}{2 b}, \ p = X_1 q,\ z =   \frac{X_1^2 y}{2}-x X_1, \ 
a = b (\frac{2 x}{y}-X_1), \ b = -\frac{y^2}{2 q}, \ x = X_2 y.\]
Within these conditions we have  
\begin{longtable}{lll}
$x a+y c+z b=0$ & $x p+y r+z q=0$ & $a^2+2b c=0$\\
$p^2+2q r=0$ & \multicolumn{2}{l}{$x^2+2y z-(a p+b r+c q)=0$}\\
 $\af^*_3=0$ &
$\af^*_6=0$ &
$\af^*_8=0$ 
\end{longtable}
and 
\begin{longtable}{lcl}
$\af^*_1 $ & $ = $ & $\frac{y^3 (-2+X_1^3-3 X_1^2 X_2+4 X_2^3+2 X_1 \af-6 X_2 \af)}{4 q}$\\
$\af^*_2 $ & $ = $ & $\frac{q y (2+X_1^3-3 X_1^2 X_2+2 X_1 \af+2 X_2 \af)}{2}$\\
$\af^*_4 $ & $ = $ & $\frac{y^4 (1+X_1^3-6 X_1^2 X_2+12 X_1 X_2^2-8 X_2^3-2 X_1 \af+4 X_2 \af)}{4 q^2}$\\
$\af^*_5 $ & $ = $ & $-\frac{3 y^2 (1+X_1^2 X_2-2 X_1 X_2^2+2 X_2 \af)}{2}$\\
$\af^*_7 $ & $ = $ & $q^2 (1-X_1^3+2 X_1 \af)$\\
$\af^*_9 $ & $ = $ & $(X_1-X_2)^4 y^4$

\end{longtable}
    \end{enumerate}

It follows, that 
\[1-X_1^3+2 X_1 \af=0 \mbox{ and }1+X_1^2 X_2-2 X_1 X_2^2+2 X_2 \af=0.\]
Analyzing the first equation, we can take $X_1,$ such that $ X_1^3\neq -\frac{1}{2}.$ 
On the other side, we have a contradiction with the fact that the product of all roots of the first equation is equal to $1.$
And also, from the second equation we can take $X_2,$ such that $X_1\neq X_2.$ 
On the other side, we will have a contradiction with $ X_1^3\neq -\frac{1}{2}.$ 
Now, it is easy to see, that
\[ \af = \frac{ X_1^2 + 2 X_1 X_2}{2} \mbox{ and }X_2 = -\frac{1}{2 X_1^2}.\]
It follows, that 
\begin{longtable}{lll}
$\af^*_1=\frac{(-1-3 X_1^3+4 X_1^9) y^3}{8 q X_1^6}$ & 
$\af^*_2=\frac{q (1+2 X_1^3)^2 y}{4 X_1^3}$ 
&$\af^*_4=\frac{(1+2 X_1^3)^2 y^4}{4 q^2 X_1^6}$\\
$\af_5^*=0$ &
$\af_7^*=0$ &
$\af^*_9=\frac{(y+2 X_1^3 y)^4}{16 X_1^8}$

\end{longtable}

For receiving $\af_2^*=\af_4^*=\af_9^*$ we should choose
\[q =  \frac{2 \sqrt{1} X_1}{1 + 2 X_1^3} \mbox{ and }  
y = \frac{2 \sqrt[3]{1} \sqrt{1}  X_1^2 }{1 + 2 X_1^3}\]
and from here we have $\beta = \sqrt[3]{1} \af.$

Hence we have the following $5$-dimensional algebras:

\begin{longtable}{lllllllllllll}
${\mathfrak{C}}_{69}^{5}(\af)$ &  
$e_1 e_1 = e_4$&$e_1e_2=\af e_5$ &$e_1e_3=e_5$ &$e_2e_2=e_5$
&$ e_2 e_3=e_4$ &$e_4e_4=e_5$\\

${\mathfrak{C}}_{70}^{5}$ &  
$e_1 e_1 = e_4$&$e_1e_2=e_5$ &$e_1e_3=e_5$ &$ e_2 e_3=e_4$ &$e_4e_4=e_5$ \\

${\mathfrak{C}}_{71}^{5}$ &  
$e_1 e_1 = e_4$&$e_1e_2=e_5$&$ e_2 e_3=e_4$ &$e_4e_4=e_5$ \\

${\mathfrak{C}}_{72}^{5}$ &  
$e_1 e_1 = e_4$&$e_2e_2=e_5$&$ e_2 e_3=e_4+e_5$ &$e_4e_4=e_5$ \\

${\mathfrak{C}}_{73}^{5}$ &  
$e_1 e_1 = e_4$&$e_2e_2=e_5$&$ e_2 e_3=e_4$ &$e_4e_4=e_5$ \\

${\mathfrak{C}}_{74}^{5}$ &  
$e_1 e_1 = e_4$&$ e_2 e_3=e_4+e_5$ &$e_4e_4=e_5$ \\

${\mathfrak{C}}_{75}^{5}$ &  
$e_1 e_1 = e_4$&$ e_2 e_3=e_4$ &$e_4e_4=e_5$ \\

\end{longtable}
where 
${\mathfrak{C}}_{69}^{5}(\af) \cong  {\mathfrak{C}}_{69}^{5}(\sqrt[3]{1}\af).$

  \subsection{$1$-dimensional central extensions of ${\mathfrak{C}_{08}^{4*}}$.}
Here we will  collect all information about ${\mathfrak{C}_{08}^{4*}}:$

\begin{longtable}{|l|l|l|}
\hline
Algebra  & Multiplication & Cohomology   \\
\hline
${\mathfrak{C}}_{08}^{4*}$ &  
$\begin{array}{l}e_1 e_1 = e_2 \\ e_1 e_2=e_4 \\ e_3e_3=e_4 \end{array}$
& 
$\begin{array}{lcl}
{\rm H^2_{\mathfrak{J}}}({\mathfrak{C}}_{08}^{4*})&=&
\langle  
[\Delta_{13}],[\Delta_{14}+\Delta_{22}], 
[\Delta_{23}],[\Delta_{33}]  \rangle \relax \\
{\rm H^2_{\mathfrak{CCD}}}({\mathfrak{C}}_{08}^{4*})&=&{\rm 
H^2_{\mathfrak{J}}}({\mathfrak{C}_{08}^{4*}}) \oplus 
\langle  [\Delta_{14}]\rangle
 \end{array}$\\
\hline
\multicolumn{3}{|l|}{ Automorphisms}\\
\hline	
\multicolumn{3}{|c|}{$\phi=
	\begin{pmatrix}
	x &  0    & 0 & 0\\
	y &  x^2  & -\frac{z r}{x} & 0\\
	z &  0    & r &  0\\
	t &  z^2+2xy  & s & x^3
	\end{pmatrix}, \mbox{ where }  r^2=x^3$}
	\\

\hline
\end{longtable}
Let us use the following notations:
\begin{longtable}{lllll}
$\nabla_1 = [\Delta_{13}]$& 
$\nabla_2 = [\Delta_{14}+\Delta_{22}]$& 
$\nabla_3 = [\Delta_{23}]$&	
$\nabla_4 = [\Delta_{33}]$&
$\nabla_5 = [\Delta_{14}]$ 
\end{longtable}

	Take $\theta=\sum\limits_{i=1}^5\alpha_{i}\nabla_{i}\in {\rm H_{\mathfrak{CCD}}^2}(\mathfrak{C}_{08}^{4*}).$
We are interested only in $\alpha_5\neq 0$ and 
$\alpha_2\neq -\af_5,$ then can suppose that $\alpha_5=1.$ 
	Since

	$$
	\phi^T\begin{pmatrix}
	0 & 0 & \alpha_1 & \alpha_2+1 \\
	0  &  \alpha_2 & \alpha_3 & 0 \\
	\alpha_1 & \alpha_3 & \alpha_4& 0\\
	\alpha_2+1 & 0 & 0  & 0
	\end{pmatrix} \phi=
	\begin{pmatrix}
	\alpha^{**} &  \alpha^{*} & \alpha^*_1 & \alpha^*_2+\af_5^* \\
	\alpha^{*}  &  \alpha^{*}_2 & \alpha^{*}_3 & 0 \\
	\alpha^*_1 &   \alpha^{*}_3 & \alpha^*_4+\alpha^{*}  & 0\\
	\alpha^*_2+\af_5^* & 0 & 0  & 0
	\end{pmatrix},
	$$ 
we have
	\begin{longtable}{lcl}
$\alpha^*_1$ &$=$&$s x (1+\af_2)- rzx^{-1} (y \af_2+z \af_3) + r(x \af_1+y \af_3+z \af_4)$\\
$\alpha^*_2$ &$=$&$x^4 \af_2$ \\
$\alpha^*_3$ &$=$&$r x (-z \af_2+x \af_3)$\\
$\alpha^*_4$ &$=$&$x (x^2 \af_4-z^2 - xy (2 + 3 \af_2) - 3 x z \af_3) $\\
$\alpha^*_5$ &$=$&$x^4$. 
	\end{longtable}

Then we have the following cases:
\begin{enumerate}
    \item if $\alpha_2\neq 0$ and $\af_2 \neq -\frac{2}{3},$ 
    then choosing 
    $x=1,$ $y=\frac{-(1+3 \af_2) \af_3^2+\af_2^2 \af_4}{\af_2^2 (2+3 \af_2)},$
    $z=\frac{\af_3}{\af_2},$
    $t=0,$
    $s=\frac{-\af_1 \af_2^2+\af_3^3-\af_2 \af_3 \af_4}{\af_2^2 (1+\af_2)}$
    and 
    $r=1,$
    we have the family of representative $\langle \af \nabla_2+\nabla_5 \rangle_{\af \not\in \{ -1,- \frac{2}{3},0\} };$
    
    \item if $\af_2 = -\frac{2}{3},$ 
    then choosing
    $y=0,$ $z=-\frac{3 x \af_3}{2},$ 
    $t=0,$ $s=-\frac{3 r (4 \af_1-9 \af_3^3-6 \af_3 \af_4)}{4}$
    and $r^2=x^3,$
    we have two representatives 
   $\langle  -\frac{2}{3} \nabla_2+\nabla_5 \rangle $ and
      $\langle  -\frac{2}{3} \nabla_2+\nabla_4+\nabla_5 \rangle,$ depending in  $ 9 \af_3^2+4 \af_4=0$ or not;

    \item if $\af_2 =0,$ 
    then choosing
    $y=\frac{x \af_4}{2},$ $z=0,$ 
    $t=0,$ $s=-\frac{r (2 \af_1+\af_3 \af_4)}{2}$
    and $r^2=x^3,$
    we have two representatives 
   $\langle   \nabla_5 \rangle $ and 
      $\langle  \nabla_3+\nabla_5 \rangle,$ depending in  $\af_3=0$ or not.
      
\end{enumerate}

Summarizing, we have the following distinct orbits
\begin{longtable}{lll}
$\langle \af \nabla_2+\nabla_5 \rangle_{\af \neq  -1}$ & 
$\langle  - 2 \nabla_2+3\nabla_4+3\nabla_5 \rangle$& 
$\langle  \nabla_3+\nabla_5 \rangle$ 
\end{longtable}

Hence we have the following $5$-dimensional algebras:
\begin{longtable}{llllllllll}
$\mathfrak{C}_{16}^5(\alpha \neq-1)$& $ : $ & $e_1e_1=e_2$ & $e_1e_2=e_4$ &\multicolumn{2}{l}{$e_1e_4= (\af+1) e_5$}& $e_2e_2=\af e_5$ &$e_3e_3=e_4$\\
$\mathfrak{C}_{76}^5$& $ : $ & $e_1e_1=e_2$ & $e_1e_2=e_4$  &$e_1e_4= e_5$ &$e_2e_2= - 2 e_5$ &$e_3e_3=e_4+3e_5$\\
$\mathfrak{C}_{77}^5$& $ : $ & $e_1e_1=e_2$ & $e_1e_2=e_4$ &$e_1e_4= e_5$ &$e_2e_3=  e_5$ &$e_3e_3=e_4$
\end{longtable}

 \subsection{$1$-dimensional central extensions of ${\mathfrak{C}^{4*}_{09}}$.}
Here we will  collect all information about ${\mathfrak{C}_{09}^{4*}}:$

\begin{longtable}{|l|l|l|l|}
\hline
Algebra  & Multiplication & Cohomology &  Automorphisms  \\
\hline
${\mathfrak{C}}_{09}^{4*}$ &  
$\begin{array}{l}
e_1 e_1 = e_2 \\ e_1 e_2=e_3\\
e_1e_3=e_4 \\ e_2e_2=e_4 \end{array}$
& 
$\begin{array}{lcl}
{\rm H^2_{\mathfrak{J}}}({\mathfrak{C}}_{09}^{4*})&=&
\left\langle 
[\Delta_{14}]+[\Delta_{23}] \right\rangle \\
{\rm H^2_{\mathfrak{CCD}}}({\mathfrak{C}}_{09}^{4*})&=&{\rm H^2_{\mathfrak{J}}}({\mathfrak{C}_{09}^{4*}}) \oplus 
\langle [\Delta_{22}]\rangle
\end{array}$& 
$\phi=
	\begin{pmatrix}
	x &    0  &  0 & 0\\
	y &  x^2  &  0 & 0\\
	z &  2xy  & x^3 &  0\\
	t &   y^{2}+2xz & 3 x^{2}y & x^4
	\end{pmatrix} 
	$\\
\hline
\end{longtable}

Let us use the following notations:

\begin{longtable}{ll}
$\nabla_1 = [\Delta_{14}]+[\Delta_{23}]$& $\nabla_2 = [\Delta_{22}]$
\end{longtable}

Take $\theta=\sum\limits_{i=1}^2\alpha_{i}\nabla_{i}\in {\rm H_{\mathfrak{CCD}}^2}(\mathfrak{C}^{4*}_{09}).$
We are interested only in $(\af_1,\af_2) \neq (0,0)$ and can suppose $\af_2=1.$
	Since
 	$$
	\phi^T\begin{pmatrix}
	0 & 0  & 0 & \alpha_1 \\
	0  & 1  & \alpha_1 & 0\\
	0 & \alpha_1 & 0  & 0\\
	\alpha_1 & 0 & 0   & 0
	\end{pmatrix} \phi=
	\begin{pmatrix}
	\alpha^{***}  & \alpha^{**}   & \alpha^{*} & \alpha_{1}^{*}\\
	\alpha^{**}  & \alpha_{2}^{*}+\alpha^{*}  & \alpha_{1}^{*} & 0 \\
	\alpha^{*} &  \alpha_{1}^{*}& 0 & 0\\
	\alpha_{1}^{*} & 0 & 0 & 0
	\end{pmatrix},
	$$ 
where
\begin{longtable}{lclccclcl}
$\alpha^*_1$&$ =$&$ x^5\alpha_{1}$& &&&$
\alpha^*_2 $&$= $&$x^4$
\end{longtable}
Here we have only one   representatives $\langle \nabla_1 + \nabla_2\rangle.$
 
Hence we have the following $5$-dimensional algebra:
\begin{longtable}{llllllllll}
$\mathfrak{C}_{78 }^5$& $ : $ & $e_1 e_1 = e_2$ & $e_1 e_2=e_3$ & $e_1e_3=e_4$ &$e_1e_4=e_5$&$e_2e_2=e_4+e_5$ &$e_2e_3=e_5$\\ 
\end{longtable}

 \subsection{$1$-dimensional central extensions of ${\mathfrak{C}_{02}^{4}}(\af)$}
Here we will  collect all information about ${\mathfrak{C}_{02}^{4}}(\af):$

\begin{longtable}{|l|l|l|}
\hline
Algebra  & Multiplication & $\mathfrak{CCD}$-Cohomology   \\
\hline
${\mathfrak{C}}_{02}^{4}(\af)$ &  
$\begin{array}{l}
e_1 e_1 = e_2\\ 
e_1 e_2=e_3 \\
e_1e_3= \alpha e_4\\ 
e_2e_2= (\alpha +1)e_4\end{array}$
& 
$\begin{array}{lcl}
{\rm H^2_{\mathfrak{CCD}}}({\mathfrak{C}}_{02}^{4}(\af)_{\af\neq0,1})&=&\langle [\Delta_{22}], [\Delta_{14}+(3+\alpha)  \Delta_{23}]\rangle\\

{\rm H^2_{\mathfrak{CCD}}}({\mathfrak{C}}_{02}^{4}(1))&=&\langle [\Delta_{22}], [\Delta_{14}+4\Delta_{23}],  [\Delta_{24}]\rangle\\

{\rm H^2_{\mathfrak{CCD}}}({\mathfrak{C}}_{02}^{4}(0))&=&\langle [\Delta_{13}], [\Delta_{14}+3\Delta_{23}] \rangle
 
 \end{array}$
 \\
 \hline
 \multicolumn{3}{|l|}{ Automorphisms}\\
\hline	
\multicolumn{3}{|c|}{  
$\phi=
	\begin{pmatrix}
	x &    0  &  0 & 0\\
	q &  x^2  &  0 & 0\\
	w &  2 q x  & x^4 & 0\\
	e & q^2 + \af  q^2 + 2 \af x w & (1 + 3 \af) x^2 y & x^3
	\end{pmatrix} 
	$}\\
\hline
\end{longtable}

\subsubsection{The case $\af\neq 0,1$}
Let us use the following notations:
\begin{longtable}{lcl}
$\nabla_1=[\Delta_{22}]$ & $\nabla_2=[\Delta_{14}+(3+\af)\Delta_{23}]$
\end{longtable}

Take $\theta=\sum\limits_{i=1}^2\alpha_{i}\nabla_{i}\in {\rm H_{\mathfrak{CCD}}^2}(\mathfrak{C}_{02}^4(\af)_{\af\neq 0,1}).$
We are interested only in $\af_2\neq 0$ and we can suppose $\af_2=1.$
	Since
	$$
	\phi^T\begin{pmatrix}
	0 & 0  & 0 & 1 \\
	0  &  \alpha_1  & 3+\af  & 0\\
	0 & 3+\af  & 0  & 0\\
	1 & 0 & 0   & 0
	\end{pmatrix} \phi=
	\begin{pmatrix}
	\alpha^{***}  & \alpha^{**}   & \af \alpha^{*} & \alpha_{2}^{*}\\
	\alpha^{**}  & \alpha_{1}^{*}+(\af+1) \alpha^{*}  & (3+\af)\alpha_{2}^{*} & 0 \\
	\alpha^{*} &  (3+\af)\alpha_{2}^{*}& 0 & 0\\
	\alpha_{2}^{*} & 0 & 0 & 0
	\end{pmatrix},
	$$ 
we have
\begin{longtable}{lcl}
$\alpha^*_1$ &$=$&$x^3 (4 q \af_2 ( \af-1)+x \af_1 \af)\af^{-1} $\\
$\alpha^*_2$ &$=$&$ x^5$.
\end{longtable}

By choosing 
$w=0,$
$q=\frac{x \af_1 \af}{
4 \af_2(1 -  \af)},$ $e=0$
and $x=1,$ we have the representative $\langle \nabla_2 \rangle.$

\subsubsection{The case $\af=1$}
Let us use the following notations:
\begin{longtable}{lll}
$\nabla_1=[\Delta_{22}]$ & $\nabla_2=[\Delta_{14}]+4 [\Delta_{23}]$ & $\nabla_3=[\Delta_{24}]$
\end{longtable}

Take $\theta=\sum\limits_{i=1}^3\alpha_{i}\nabla_{i}\in {\rm H_{\mathfrak{CCD}}^2}(\mathfrak{C}_{02}^4(1)).$
We are interested only in $(\af_2, \af_3)\neq (0,0).$
	Since
	$$
	\phi^T\begin{pmatrix}
	0 & 0  & 0 & \alpha_2 \\
	0  &  \alpha_1  & 4\alpha_2 & \af_3\\
	0 & 4\alpha_2 & 0  & 0\\
	\alpha_2 & \af_3 & 0   & 0
	\end{pmatrix} \phi=
	\begin{pmatrix}
	\alpha^{***}  & \alpha^{**}   &  \alpha^{*} & \alpha_{1}^{*}\\
	\alpha^{**}  & \alpha_{2}^{*}+2 \alpha^{*}  &4 \alpha_{1}^{*} & \af_3^* \\
	\alpha^{*} &  4 \alpha_{1}^{*}& 0 & 0\\
	\alpha_{1}^{*} & \af_3^* & 0 & 0
	\end{pmatrix},
	$$ 
where
\begin{longtable}{lcl}
$\alpha^*_1$ &$=$&$ x^2 (x^2 \af_1-4 q^2 \af_3+4 w x \af_3)$\\
$\alpha^*_2$ &$=$&$ x^4 (x \af_2+q \af_3)$\\
$\alpha^*_3$ &$=$&$ x^6 \af_3$\\
\end{longtable}
Then we have the following cases:
\begin{enumerate}

\item if $\af_3=0,$ then we have two representatives
$\langle \nabla_2 \rangle$ and $\langle \nabla_1+\nabla_2 \rangle,$ depending on $\af_1=0$ or not;

\item if $\af_3\neq 0,$ then choosing  $x=4,$
$q=-4 \af_2,$
$w=4 \af_2^2-\af_1$
and $e=0,$
we have the representative $\langle \nabla_3 \rangle.$

\end{enumerate}

\subsubsection{The case $\af=0$}
Let us use the following notations
\begin{longtable}{lclccclcl}
$\nabla_1 = [\Delta_{13}]$& $\nabla_2 = [\Delta_{14}]+3[\Delta_{23}]$ 
\end{longtable}
	Take $\theta=\sum\limits_{i=1}^2\alpha_{i}\nabla_{i}\in {\rm H_{\mathfrak{CCD}}^2}(\mathfrak{C}_{02}^{4}(0)).$ We are interested only in $\af_2\neq 0$ and can suppose $\af_2=1.$ Since
 	$$
	\phi^T\begin{pmatrix}
	0  &  0  &  \alpha_1  & \alpha_2\\
	0  &  0  &  3\alpha_2 &0 \\
	\af_1 & 3\alpha_2  & 0   & 0\\
	\alpha_2 & 0  & 0   & 0
	\end{pmatrix} \phi=
	\begin{pmatrix}
	\alpha^*      &  \alpha^{**}    & \alpha^*_1 & \alpha^{*}_2\\
	\alpha^{**}    & \alpha^{***}     &3\alpha^*_2& 0\\
	\af^*_1 & \alpha^*_2  &  0  & 0 &  \\
	\alpha^{*}_2 & 0 & 0 & 0
	\end{pmatrix},
	$$ 
	we have
	\begin{longtable}{rclcccrcl}
$\alpha^*_1 = x^3 (4 q \af_2+x \af_1)$
& $\alpha^*_2 = x^5\alpha_{1}$.	\end{longtable}

By choosing $q=-\frac{\alpha_1}{4\alpha_2},$  we have the representative $\langle \nabla_2 \rangle$.

Summarizing all cases from  the family of algebras  ${\mathfrak{C}_{02}^{4}}(\af),$
we have the following distinct orbits

\begin{longtable}{llllllllll}
$\langle \nabla_1+\nabla_2 \rangle_{\af=1} $ & 
$\langle \nabla_2 \rangle $ & 
$\langle \nabla_3 \rangle_{\af=1} $
\end{longtable}

Hence we have the following $5$-dimensional algebra:
\begin{longtable}{llllllllll}

$\mathfrak{C}_{79}^5 $& $ : $ & 
$e_1 e_1 = e_2$& 
$e_1 e_2=e_3$&
$e_1e_3=  e_4$& 
$e_1e_4=  e_5$& 
$e_2e_2= 2 e_4+e_5$&
$e_2e_3=  4 e_5$ \\

$\mathfrak{C}_{80}^5(\af)$& $ : $ & 
$e_1 e_1 = e_2$& 
$e_1 e_2=e_3$&
$e_1e_3= \alpha e_4$& 
$e_1 e_4=e_5$&
$e_2e_2= (\alpha +1)e_4$&
$e_2 e_3=(\af+3)e_5$\\

$\mathfrak{C}_{81}^5 $& $ : $ & 
$e_1 e_1 = e_2$& 
$e_1 e_2=e_3$&
$e_1e_3=  e_4$& 
$e_2e_2= 2 e_4$&
$e_2e_4=   e_5$ \\

\end{longtable}

\subsection{ Trivial central extensions}
Here we collect the cocycles  of other $4$-dimensional nilpotent $\mathfrak{CCD}$-algebras.
It is easy to see, that every central extension of these algebras is trivial.
 
\begin{longtable}{|l|l|}\hline

\hline

Algebras & Cocycles\\
\hline

${\mathfrak{C}_{07}^{4*}},$
${\mathfrak{C}_{11}^{4*}},$
${\mathfrak{C}_{12}^{4*}},$
${\mathfrak{C}^{4*}_{01}},$
${\mathfrak{C}}^{4}_{03},$
${\mathfrak{C}}^{4}_{04},$
${\mathfrak{C}}^{4}_{05},$
${\mathfrak{C}}^{4}_{06},$
${\mathfrak{C}}^{4}_{07}$ &
$\left\langle \Delta_{11}, \Delta_{12}, \Delta_{22}, \Delta_{13}, \Delta_{23}, \Delta_{33} \right\rangle$ \\

\hline

\end{longtable}

\medskip

  \section{Classification theorem}
  
  \begin{theorem}\label{teor}
Let $\mathfrak{C}$ be a complex $5$-dimensional nilpotent commutative $\mathfrak{CD}$-algebra.
Then $\mathfrak{C}$ is a Jordan algebra or it is isomorphic to one algebra from the following list:
{\tiny 
\begin{longtable}{lllllllllll}

$\mathfrak{C}^{5}_{01}$&$:$& $e_1 e_1 = e_2$ & $e_2 e_2=e_3$\\
			            
$\mathfrak{C}^{5}_{02}(\alpha)$&$:$&  $e_1 e_1 = e_2$  & $e_1 e_2=e_3$& $e_1e_3= \alpha e_4$  & \multicolumn{2}{l}{$e_2e_2= (\alpha +1)e_4$} \\
			
$\mathfrak{C}^{5}_{03}$& $: $&   $e_1 e_1 = e_2$& $e_1e_3=e_4$& $e_2e_2=e_4$		\\

$\mathfrak{C}^{5}_{04}$& $: $&     $e_1 e_1 = e_2$ & $e_2e_2=e_4$& $e_3e_3=e_4$ \\
 
$\mathfrak{C}^5_{05}$ & $: $ & $e_1 e_2 = e_3$ & $e_3 e_3=e_4$	\\  
$\mathfrak{C}^5_{06}$ & $: $ & $e_1 e_1 = e_4$ & $e_1 e_2=e_3$ & $e_2e_2=e_4$& $e_3e_3=e_4$\\ 
$\mathfrak{C}^5_{07}$ & $: $ & $e_1 e_1 = e_4$ & $e_1 e_2=e_3$ & $e_3e_3=e_4$\\
$\mathfrak{C}^5_{08}$ & $: $ & $e_1e_1=e_2$ & $e_1e_3=e_4$ & $e_2e_2=e_5 $\\
$\mathfrak{C}^5_{09}$ & $: $& $e_1e_1=e_2$ & $e_1e_3=e_4$ & $e_2e_2=e_5$ & $e_3e_3=e_5 $\\
$\mathfrak{C}^5_{10}$ & $ :$ & $e_1e_1=e_2$ & $e_1e_2=e_4$ & $e_2e_2=e_5 $\\
$\mathfrak{C}^5_{11}$ & $ : $ & $e_1e_1=e_2$ & $e_1e_2=e_4$ & $e_1e_3=e_5$ & $e_2e_2=e_5 $\\
$\mathfrak{C}_{12}^5(\alpha)$&$:$& 
$e_1e_1=e_2$ & $e_1e_2=e_3$ &$e_1e_3=(\alpha+1) e_5$ &$e_2e_2= \alpha e_5$  &$e_2e_4= e_5$\\
$\mathfrak{C}_{13}^5(\alpha, \beta)$&$:$& 
$e_1e_1=e_2$ & $e_1e_2=e_3$ &$e_1e_3=(\alpha+1) e_5$ &$e_2e_2= \alpha e_5$ &$e_2e_4= \beta e_5$ &$e_4e_4= e_5$\\
$\mathfrak{C}^5_{14}$ & $ : $ & $e_1e_1=e_2$ & $e_2e_2=e_5$ & $e_3e_3=e_4 $\\
$\mathfrak{C}^5_{15}$ & $ : $ & $e_1e_1=e_2$ & $e_1e_3=e_5 $ & $ e_2e_2=e_5$ & $e_3e_3=e_4 $\\
$\mathfrak{C}_{16}^5(\alpha)$& $ : $ & $e_1e_1=e_2$ & $e_1e_2=e_4$ &$e_1e_4= (\af+1) e_5$& $e_2e_2=\af e_5$ &$e_3e_3=e_4$\\
$\mathfrak{C}^5_{17}$ & $ : $ & $e_1e_1=e_2$ & $e_1e_2=e_4$ & $e_1e_3=e_5$ & $e_2e_2=e_5$ & $e_3e_3=e_4 $\\
$\mathfrak{C}^5_{18}$ & $ : $ & $e_1e_1=e_2$ & $e_2e_2=e_5$ & $e_2e_3=e_4 $\\
$\mathfrak{C}^5_{19}$ & $ : $ & $e_1e_1=e_2$ & $e_2e_2=e_5$ & $e_2e_3=e_4$ & $e_3e_3=e_5 $\\
$\mathfrak{C}^5_{20}$ & $ : $ & $e_1e_1=e_2$ & $e_1e_3=e_5$ & $e_2e_2=e_5$ & $e_2e_3=e_4 $\\
$\mathfrak{C}^5_{21}$ & $ : $ & $e_1e_1=e_2$ & $e_1e_3=e_5$ & $e_2e_2=e_5$ & $e_2e_3=e_4$ & $e_3e_3=e_5 $\\
$\mathfrak{C}^5_{22}$ & $ : $ & $e_1e_1=e_2$ & $e_1e_2=e_5$ & $e_2e_2=e_5$ & $e_2e_3=e_4 $\\
$\mathfrak{C}^5_{23}$ & $ : $ & $e_1e_1=e_2$ & $e_1e_2=e_5$ & $e_2e_2=e_5$ & $e_2e_3=e_4$ & $e_3e_3=e_5 $\\
$\mathfrak{C}^5_{24}(\alpha)$ & $ : $ & $e_1e_1=e_2$ & $e_1e_2=e_5$ & $e_1e_3=e_5$ & $e_2e_2=e_5$ & $e_2e_3=e_4$ & $e_3e_3=\alpha e_5 $\\
$\mathfrak{C}_{25}^5$&$:$& $e_1e_1=e_2$ & $e_1e_2=e_3$ &$e_1e_3=e_4$ & $e_2e_2=e_5$\\
$\mathfrak{C}^5_{26}(\alpha,\beta)$ & $: $ & $e_1e_1=\alpha e_5$ & $e_1e_2=e_3$ & $e_2e_2=\beta e_5$ & $e_1e_3=e_4+e_5$ & $e_2e_3=e_4$ & $e_3e_3=e_5 $\\
$\mathfrak{C}^5_{27}(\alpha)$ & $ : $ & $e_1e_1=\alpha e_5$ & $e_1e_2=e_3$ & $e_2e_2=e_5$ & $e_1e_3=e_4$ & $e_2e_3=e_4$ & $e_3e_3=e_5 $\\
$\mathfrak{C}^5_{28}$ & $ : $ & $e_1e_1=e_5$ & $e_1e_2=e_3$ & $e_1e_3=e_4$ & $e_2e_3=e_4$ & $e_3e_3=e_5 $\\
$\mathfrak{C}^5_{29}$ & $ : $ & $e_1e_2=e_3$ & $e_1e_3=e_4$ & $e_2e_3=e_4$ & $e_3e_3=e_5 $\\
$\mathfrak{C}^5_{30}(\alpha)$ & $ : $ & $e_1e_1=e_4+\alpha e_5$ & $e_1e_2=e_3$ & $e_2e_2=e_5$ & $e_2e_3=e_4$ & $e_3e_3=e_5 $\\
$\mathfrak{C}^5_{31}$ & $ : $ & $e_1e_1=e_4+e_5$ & $e_1e_2=e_3$ & $e_2e_3=e_4$ & $e_3e_3=e_5 $\\
$\mathfrak{C}^5_{32}$ & $ : $ & $e_1e_1=e_4$ & $e_1e_2=e_3$ & $e_2e_3=e_4$ & $e_3e_3=e_5 $\\
$\mathfrak{C}^5_{33}$ & $ : $ & $e_1e_1=e_5$ & $e_1e_2=e_3$ & $e_2e_2=e_5$ & $e_2e_3=e_4$ & $e_3e_3=e_5 $\\
$\mathfrak{C}^5_{34}$ & $ : $ & $e_1e_1=e_5$ & $e_1e_2=e_3$ & $e_2e_3=e_4$ & $e_3e_3=e_5 $\\
$\mathfrak{C}^5_{35}$ & $ :$ & $e_1e_2=e_3$ & $e_2e_2=e_5$ & $e_2e_3=e_4$ & $e_3e_3=e_5 $\\
$\mathfrak{C}^5_{36}$ & $ :$ & $e_1e_2=e_3$ & $e_2e_3=e_4$ & $e_3e_3=e_5 $\\
$\mathfrak{C}^5_{37}$ & $ :$ & $e_1e_1=e_4+e_5 $ & $e_1e_2=e_3$ & $e_2e_2=e_4$ & $e_3e_3=e_5 $\\
$\mathfrak{C}^5_{38}$ & $ :$ & $e_1e_1=e_4$ & $e_1e_2=e_3$ & $e_2e_2=e_4$ & $e_3e_3=e_5 $\\
$\mathfrak{C}^5_{39}$ & $ :$ & $e_1e_1=e_5$ & $e_1e_2=e_3$ & $e_2e_2=e_4$ & $e_3e_3=e_5 $\\
$\mathfrak{C}^5_{40}$ & $ : $ & $e_1e_2=e_3$ & $e_2e_2=e_4$ & $e_3e_3=e_5 $ \\
$\mathfrak{C}_{41}^5$& $ : $ & $e_1e_1=e_2$ & $e_2e_2=e_5$ &$e_3e_4=e_5$\\
$\mathfrak{C}_{42}^5$& $ : $ & $e_1e_1=e_2$ & $e_1e_3=e_5$ &$e_2e_2=e_5$ &$e_4e_4= e_5$\\
$\mathfrak{C}_{43}^5$& $ : $ & $e_1e_2=e_3$ & $e_1e_1=e_5$ & $e_2e_4=e_5$ & $e_3e_3=e_5$\\
$\mathfrak{C}_{44}^5$& $ : $ & $e_1e_2=e_3$ & $e_1e_1=e_5$ & $e_2e_2=e_5$ & $e_3e_3=e_5$& $e_4e_4=e_5$\\
$\mathfrak{C}_{45}^5$& $ : $ & $e_1e_2=e_3$ & $e_1e_1=e_5$ & $e_3e_3=e_5$ & $e_4e_4=e_5$\\
$\mathfrak{C}_{46}^5$& $ : $ & $e_1e_2=e_3$ & $e_1e_4=e_5$ & $e_2e_4=e_5$ & $e_3e_3=e_5$\\
$\mathfrak{C}_{47}^5$& $ : $ & $e_1e_2=e_3$ & $e_2e_4=e_5$ & $e_3e_3=e_5$ \\
$\mathfrak{C}_{48}^5$& $ : $ & $e_1e_2=e_3$ & $e_3e_3=e_5$ & $e_4e_4=e_5$\\
 $\mathfrak{C}_{49}^5(\af)$& $ : $ & $e_1e_1=e_3$ & $e_1e_2=e_5$ &$e_2e_2=e_4$ & $e_3e_3=\af e_5$ & $e_3e_4=e_5$& $e_4e_4=e_5$ \\
 $\mathfrak{C}_{50}^5$& $ : $ & $e_1e_1=e_3$  & $e_1e_2=e_5$ &$e_2e_2=e_4$ & $e_3e_3=e_5$ & $e_4e_4=e_5$\\
 $\mathfrak{C}_{51}^5$& $ : $ & $e_1e_1=e_3$  & $e_1e_2=e_5$ &$e_2e_2=e_4$ & $e_3e_4=e_5$\\
 $\mathfrak{C}_{52}^5(\af)$& $ : $ & $e_1e_1=e_3$  & $e_1e_3=\af e_5$ &$e_2e_2=e_4$ &$e_2e_3=e_5$ &$e_3e_3=e_5$
 &$e_3e_4=e_5$ &$e_4e_4=e_5$\\
 $\mathfrak{C}_{53}^5$& $ : $ & $e_1e_1=e_3$  & $e_1e_3=e_5$ &$e_2e_2=e_4$ & $e_2e_3=e_5$ & $e_4e_4=e_5$\\
 $\mathfrak{C}_{54}^5$& $ : $ & $e_1e_1=e_3$  & $e_1e_3=e_5$ &$e_2e_2=e_4$ & $e_4e_4=e_5$\\
 $\mathfrak{C}_{55}^5$& $ : $ & $e_1e_1=e_3$  & $e_2e_2=e_4$ &$e_2e_3=e_5$ & $e_4e_4=e_5$\\
 $\mathfrak{C}_{56}^5(\af)$& $ : $ & $e_1e_1=e_3$  &$e_2e_2=e_4$ & $e_3e_3=\alpha e_5$ & $e_3e_4=e_5$ & $e_4e_4=e_5$\\
 $\mathfrak{C}_{57}^5$& $ : $ & $e_1e_1=e_3$  &$e_2e_2=e_4$ & $e_3e_3=e_5$ & $e_4e_4=e_5$\\
 $\mathfrak{C}_{58}^5$& $ : $ & $e_1e_1=e_3$  &$e_2e_2=e_4$ & $e_3e_4=e_5$\\
 $\mathfrak{C}_{59}^5$& $ : $ & $e_1e_1=e_3$ & $e_1e_2=e_4$ &$e_1e_3=e_5$ &$e_2e_2=e_5$ &$e_4e_4=e_5$\\
$\mathfrak{C}_{60}^5$& $ : $ & $e_1e_1=e_3$ & $e_1e_2=e_4$ &$e_1e_3=e_5$ &$e_2e_3=e_5$ &$e_4e_4=e_5$\\
$\mathfrak{C}_{61}^5$& $ : $ & $e_1e_1=e_3$ & $e_1e_2=e_4$ &$e_1e_3=e_5$ &$e_4e_4=e_5$\\ 
$\mathfrak{C}_{62}^5(\af)$& $ : $ & $e_1e_1=e_3$ & $e_1e_2=e_4$&$e_1e_4=e_5$ &$e_2e_2=\af e_5$ &$e_3e_3=e_5$\\ 
$\mathfrak{C}_{63}^5$& $ : $ & $e_1e_1=e_3$ & $e_1e_2=e_4$ &$e_2e_2=e_5$ &$e_3e_4=e_5$\\
$\mathfrak{C}_{64}^5$& $ : $ & $e_1e_1=e_3$ & $e_1e_2=e_4$ &$e_2e_3=e_5$ &$e_3e_3=e_5$ &$e_4e_4=e_5$\\ 
$\mathfrak{C}_{65}^5$& $ : $ & $e_1e_1=e_3$ & $e_1e_2=e_4$ &$e_2e_3=e_5$ &$e_4e_4=e_5$\\
$\mathfrak{C}_{66}^5$& $ : $ & $e_1e_1=e_3$ & $e_1e_2=e_4$ &$e_2e_4=e_5$ &$e_3e_3=e_5$\\ 
$\mathfrak{C}_{67}^5$& $ : $ & $e_1e_1=e_3$ & $e_1e_2=e_4$ &$e_3e_3=e_5$ &$e_4e_4=e_5$\\ 
$\mathfrak{C}_{68}^5$& $ : $ &   $e_1e_1=e_3$ & $e_1e_2=e_4$ &$e_3e_4=e_5$\\
${\mathfrak{C}}_{69}^{5}(\af)$ & $ : $ &   $e_1 e_1 = e_4$&$e_1e_2=\af e_5$ &$e_1e_3=e_5$ &$e_2e_2=e_5$
&$ e_2 e_3=e_4$ &$e_4e_4=e_5$\\
${\mathfrak{C}}_{70}^{5}$ & $ : $ &   $e_1 e_1 = e_4$&$e_1e_2=e_5$ &$e_1e_3=e_5$ &$ e_2 e_3=e_4$ &$e_4e_4=e_5$ \\
${\mathfrak{C}}_{71}^{5}$& $ : $ &   $e_1 e_1 = e_4$&$e_1e_2=e_5$&$ e_2 e_3=e_4$ &$e_4e_4=e_5$ \\
${\mathfrak{C}}_{72}^{5}$ & $ : $ &   $e_1 e_1 = e_4$&$e_2e_2=e_5$&$ e_2 e_3=e_4+e_5$ &$e_4e_4=e_5$ \\
${\mathfrak{C}}_{73}^{5}$ & $ : $ &   $e_1 e_1 = e_4$&$e_2e_2=e_5$&$ e_2 e_3=e_4$ &$e_4e_4=e_5$ \\
${\mathfrak{C}}_{74}^{5}$ & $ : $ &   $e_1 e_1 = e_4$&$ e_2 e_3=e_4+e_5$ &$e_4e_4=e_5$ \\
${\mathfrak{C}}_{75}^{5}$ & $ : $ &   $e_1 e_1 = e_4$&$ e_2 e_3=e_4$ &$e_4e_4=e_5$ \\
$\mathfrak{C}_{76}^5$& $ : $ & $e_1e_1=e_2$ & $e_1e_2=e_4$  &$e_1e_4= e_5$ &$e_2e_2= - 2 e_5$ &$e_3e_3= e_4+3e_5$\\
$\mathfrak{C}_{77}^5$& $ : $ & $e_1e_1=e_2$ & $e_1e_2=e_4$ &$e_1e_4= e_5$ &$e_2e_3=  e_5$ &$e_3e_3=e_4$\\
 
$\mathfrak{C}_{78 }^5$& $ : $ & $e_1 e_1 = e_2$ & $e_1 e_2=e_3$ & $e_1e_3=e_4$ &$e_1e_4=e_5$&$e_2e_2=e_4+e_5$ &$e_2e_3=e_5$\\ 

$\mathfrak{C}_{79}^5 $& $ : $ & 
$e_1 e_1 = e_2$& 
$e_1 e_2=e_3$&
$e_1e_3=  e_4$& 
$e_1e_4=  e_5$& 
$e_2e_2= 2 e_4+e_5$&
$e_2e_3=  4 e_5$ \\

$\mathfrak{C}_{80}^5(\af)$& $ : $ & 
$e_1 e_1 = e_2$& 
$e_1 e_2=e_3$&
$e_1e_3= \alpha e_4$& 
$e_1 e_4=e_5$&
$e_2e_2= (\alpha +1)e_4$&
\multicolumn{2}{l}{$e_2 e_3=(\af+3)e_5$}\\

$\mathfrak{C}_{81}^5 $& $ : $ & 
$e_1 e_1 = e_2$& 
$e_1 e_2=e_3$&
$e_1e_3=  e_4$& 
$e_2e_2= 2 e_4$&
$e_2e_4=   e_5$ \\
		\end{longtable}}

All algebras from the present list are non-isomorphic, excepting		
\begin{longtable}{lcccccr}
$\mathfrak{C}_{13}^5(\alpha, \beta) \cong         
 \mathfrak{C}_{13}^5(\alpha,-\beta)$ & \ & $\mathfrak{C}^5_{26}(\alpha,\beta) \cong  \mathfrak{C}^5_{26}(\beta, \alpha)$ & \ & $\mathfrak{C}^5_{27}(\alpha) \cong  \mathfrak{C}^5_{27}(\frac 1 {\alpha})$ & \ &
${\mathfrak{C}}_{69}^{5}(\af) \cong  {\mathfrak{C}}_{69}^{5}(\sqrt[3]{1}\af)$ 
\end{longtable}
  \end{theorem}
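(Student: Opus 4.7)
The plan is to prove the theorem by applying the central-extension machinery developed in the preliminaries to all smaller-dimensional nilpotent $\mathfrak{CCD}$-algebras and collecting the resulting orbits. Since $\mathfrak{C}$ is nilpotent and finite-dimensional, $\operatorname{Ann}(\mathfrak{C})\ne 0$, so by Lemma~2 there exist a nilpotent $\mathfrak{CCD}$-algebra $\mathbf{A}'$ of dimension $5-s$ and a cocycle $\theta\in\mathrm{Z}^2(\mathbf{A}',\mathbb{V})$ with $\operatorname{Ann}(\mathbf{A}')\cap\operatorname{Ann}(\theta)=0$ such that $\mathfrak{C}\cong\mathbf{A}'_\theta$, where $s=\dim\operatorname{Ann}(\mathfrak{C})$ and $\dim\mathbb{V}=s$. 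Since we want non-split extensions and $\mathbf{A}'$ is nilpotent with $\dim\mathbf{A}'\ge 1$, only $s\in\{1,2\}$ can occur (for $s\ge 3$ the quotient $\mathbf{A}'$ would have dimension $\le 2$, forcing $\mathbf{A}'$ to be abelian and the extension to be split).

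First I would treat $s=1$. For each $4$-dimensional nilpotent $\mathfrak{CCD}$-algebra $\mathbf{A}'$ listed in Section~1.3 (namely $\mathfrak{C}^{4*}_{01},\ldots,\mathfrak{C}^{4*}_{06}$, $\mathfrak{C}^{4}_{01},\mathfrak{C}^{4}_{02}(\alpha)$) together with the algebras $\mathfrak{C}^{4*}_{07},\ldots,\mathfrak{C}^{4*}_{12}$ and $\mathfrak{C}^{4}_{03},\ldots,\mathfrak{C}^{4}_{07}$ appearing later, I apply the three-step procedure from the preliminaries: compute $\mathrm{H}^2_{\mathfrak{CCD}}(\mathbf{A}',\mathbb{C})$, $\operatorname{Aut}(\mathbf{A}')$ and the $\operatorname{Aut}(\mathbf{A}')$-orbits on $\mathbf{T}_1(\mathbf{A}')$. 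When $\mathbf{A}'$ is a Jordan algebra I restrict to orbits lying in $\mathbf{U}_1(\mathbf{A}')$, i.e.\ those whose class falls outside $\mathrm{H}^2_{\mathfrak{J}}(\mathbf{A}',\mathbb{C})$, so that the extension is genuinely non-Jordan. These computations have been carried out explicitly in Sections~2 and~3 and yield the families $\mathfrak{C}^5_{08}$--$\mathfrak{C}^5_{81}$; the ``trivial central extensions'' table at the end of Section~3 rules out the remaining $4$-dimensional sources.

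Next I would treat $s=2$. The only $3$-dimensional nilpotent $\mathfrak{CCD}$-algebras are $\mathfrak{C}^{3*}_{01}$, $\mathfrak{C}^{3*}_{02}$, $\mathfrak{C}^{3*}_{03}$ and $\mathfrak{C}^{3}_{01}$. For each of them I compute the $\operatorname{Aut}(\mathbf{A}')$-orbits on $\mathbf{T}_2(\mathbf{A}')$ consisting of two-dimensional subspaces of $\mathrm{H}^2_{\mathfrak{CCD}}(\mathbf{A}',\mathbb{C})$ that are not entirely contained in $\mathrm{H}^2_{\mathfrak{J}}(\mathbf{A}',\mathbb{C})$. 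This work has been carried out in Section~2, and it produces no additional algebras beyond those already listed, which will be verified by matching multiplication tables. By Lemma~3 the $\operatorname{Aut}(\mathbf{A}')$-orbit of $\langle[\theta_1],[\theta_2]\rangle$ determines $\mathbf{A}'_\theta$ up to isomorphism, so there is no over-counting within each case.

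Finally I would assemble the complete list. The main obstacle is sorting out the isomorphism relations between algebras produced from different $\mathbf{A}'$ (an algebra constructed as an extension of $\mathfrak{C}^{3*}_{01}$ could be isomorphic to one constructed from $\mathfrak{C}^{4*}_{03}$, for instance). To handle this, for each candidate pair I extract a canonical invariant (dimensions of $\operatorname{Ann}$, of $\mathbf{A}\mathbf{A}$, of the annihilator quotient, Jordan/non-Jordan type, etc.) which separates most of the families, and for the residual coincidences within the parameter families $\mathfrak{C}^5_{13}$, $\mathfrak{C}^5_{26}$, $\mathfrak{C}^5_{27}$, $\mathfrak{C}^5_{69}$ I produce explicit change-of-basis isomorphisms realizing the four stated identifications $\mathfrak{C}^5_{13}(\alpha,\beta)\cong\mathfrak{C}^5_{13}(\alpha,-\beta)$, $\mathfrak{C}^5_{26}(\alpha,\beta)\cong\mathfrak{C}^5_{26}(\beta,\alpha)$, $\mathfrak{C}^5_{27}(\alpha)\cong\mathfrak{C}^5_{27}(1/\alpha)$, and $\mathfrak{C}^5_{69}(\alpha)\cong\mathfrak{C}^5_{69}(\sqrt[3]{1}\alpha)$, the last of which was already derived during the orbit analysis of $\mathfrak{C}^{4*}_{06}$.
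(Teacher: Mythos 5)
Your proposal follows the paper's own route exactly: Lemma 2 reduces the problem to central extensions, the three-step Skjelbred--Sund procedure (restricted to $\mathbf{U}_s$ over Jordan base algebras) is applied to the $3$- and $4$-dimensional nilpotent $\mathfrak{CCD}$-algebras, and the theorem is then just the concatenation of the orbit lists computed in Sections~2 and~3 together with the four stated parameter identifications. Two points deserve correction, though neither invalidates the plan. First, your justification for discarding $s\ge 3$ is wrong as stated: a central extension of the $2$-dimensional abelian algebra by a $3$-dimensional space need not be split (e.g.\ $e_1e_1=e_3$, $e_1e_2=e_4$, $e_2e_2=e_5$ is non-split with $3$-dimensional annihilator); the correct reason these cases contribute nothing is that any such extension satisfies $\mathbf{A}\mathbf{A}\subseteq\operatorname{Ann}(\mathbf{A})$, hence is $2$-step nilpotent and therefore a Jordan algebra, so it is excluded by the hypothesis of the theorem (and for the non-abelian $2$-dimensional algebra $e_1e_1=e_2$ one checks $\dim \mathrm{H}^2\le 2$, so $\mathbf{T}_3$ is empty). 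Second, by restricting from the outset to non-split extensions you do not directly account for the algebras $\mathfrak{C}^5_{01}$--$\mathfrak{C}^5_{07}$ in the list, which all possess an annihilator component; these arise by adjoining trivial direct summands to the non-Jordan nilpotent $\mathfrak{CCD}$-algebras of dimension at most $4$ (themselves classified by the same method), and this reduction step should be stated explicitly for the list to be complete.
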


\newpage


\begin{thebibliography}{99}


\bibitem{ack}
Abdelwahab H.,  Calder\'on A.J., Kaygorodov I.,
 The algebraic and geometric classification of nilpotent binary Lie algebras, 
 International Journal of Algebra and Computation,  29 (2019), 6, 1113--1129.
  


\bibitem{omirov}
Adashev J.,  Camacho L.,  Omirov B.,
    Central extensions of null-filiform and naturally graded filiform non-Lie Leibniz algebras,
    Journal of Algebra, 479 (2017), 461--486.

 
\bibitem{arenas}
Arenas M., 
The Wedderburn principal theorem for generalized almost-Jordan algebras,
Communications in Algebra, 35 (2007), 2, 675--688.


\bibitem{labra}
Arenas M., Labra A., 
On nilpotency of generalized almost-Jordan right-nilalgebras, 
Algebra Colloquium, 15 (2008),  1, 69--82.



 
\bibitem{cfk182}
Calderón Martín A., Fern\'andez Ouaridi A., Kaygorodov I.,
    The classification of $n$-dimensional anticommutative algebras with $(n-3)$-dimensional annihilator, 
    Communications in Algebra, 47 (2019), 1, 173--181.




\bibitem{cfk19}
Calderón Martín A.,  Fern\'andez Ouaridi A., Kaygorodov I.,
    The classification of $2$-dimensional rigid algebras,
    Linear and Multilinear Algebra, 68 (2020), 4,  828--844. 
    
\bibitem{cfk18}
Calderón Martín A., Fern\'andez Ouaridi A., Kaygorodov I.,
    On the classification of bilinear maps with   radical of a fixed codimension,
    Linear and Multilinear Algebra,  2020, DOI: 10.1080/03081087.2020.1849001

\bibitem{ckkk20}
Camacho L., Karimjanov I., Kaygorodov I., Khudoyberdiev  A.,  
    One-generated nilpotent Novikov algebras, 
    Linear and Multilinear Algebra, 2020, DOI: 10.1080/03081087.2020.1725411.
    
    
 

\bibitem{ck13}	
Camacho L., Karimjanov I., Kaygorodov I., Khudoyberdiev  A.,  
    Central extensions of filiform Zinbiel algebras, 
    Linear and Multilinear Algebra, 2020, DOI:10.1080/03081087.2020.1764903.
  
\bibitem{lisa}
Camacho L., Kaygorodov I.,  Lopatkin V., Salim M., 
    The variety of dual Mock-Lie algebras,  
    Communications in Mathematics, 28 (2020), 2, 161--178.
    

\bibitem{degr3}
Cicalò S., De Graaf W.,   Schneider C.,
    Six-dimensional nilpotent Lie algebras,
    Linear Algebra and its Applications, 436 (2012), 1, 163--189.


\bibitem{cl09}
Correa I.,  Labra A., 
On nilpotency in five dimensional almost Jordan right nilalgebras, 
International Journal of Mathematics, Game Theory, and Algebra, 18 (2009),  3, 165--169.


\bibitem{cl09-2}
Correa I.,  Labra A., 
On nilpotency of almost-Jordan right-nilalgebras, 
International Journal of Mathematics, Game Theory, and Algebra, 18 (2009),  3, 183--185.



\bibitem{usefi1}
Darijani I., Usefi H.,
    The classification of 5-dimensional $p$-nilpotent restricted Lie algebras over perfect fields, I.,
    Journal of Algebra, 464 (2016), 97--140.


\bibitem{degr2}
De Graaf W., 
    Classification of 6-dimensional nilpotent Lie algebras over fields of characteristic not $2$, 
    Journal of Algebra, 309  (2007), 2, 640--653.

\bibitem{degr1}
De Graaf W., 
    Classification of nilpotent associative algebras of small dimension,
    International Journal of Algebra and Computation, 28 (2018),  1, 133--161.
 

 


\bibitem{askar18}
 Dzhumadildaev A., 
 Assosymmetric algebras under Jordan product, 
 Communications in Algebra, 46 (2018), 1, 129-142.





 \bibitem{fkkv}
 Fernández Ouaridi A., Kaygorodov I., Khrypchenko M., Volkov Yu.,
    Degenerations of nilpotent algebras, Journal of Pure and Applied Algebra,   226  (2022),  3,  Paper: 106850.
 
 
\bibitem{fl15}
Flores M.,  Labra A., 
Representations of generalized almost-Jordan algebras, 
Communications in Algebra, 43 (2015), 8, 3372--3381.

 
\bibitem{gkk}
Gorshkov I., Kaygorodov I., Khrypchenko M.,
    The algebraic classification of nilpotent Tortkara algebras,
    Communications in Algebra, 48 (2020), 8,  3608--3623


\bibitem{He}
Guzzo H., Labra A., 
    An equivalence in generalized almost-Jordan algebras,  
    Proyecciones Journal of Mathematics, 35 (2016), 4, 505--519.


\bibitem{ha16}
Hegazi A., Abdelwahab H.,
    Classification of five-dimensional nilpotent Jordan algebras,
    Linear Algebra and its Applications, 494 (2016), 165--218.



\bibitem{ha17}
Hegazi A., Abdelwahab H.,
    The classification of $n$-dimensional non-associative Jordan algebras with $(n-3)$-dimensional annihilator,
    Communications in Algebra, 46 (2018), 2, 629--643.

\bibitem{hac16}
Hegazi A., Abdelwahab H., Calderón Martín A.,
    The classification of $n$-dimensional non-Lie Malcev algebras with $(n-4)$-dimensional annihilator, 
    Linear Algebra and its Applications, 505 (2016), 32--56.

\bibitem{hac18}
Hegazi A., Abdelwahab H.,  Calderón Martín A.,
    Classification of nilpotent Malcev algebras of small dimensions over arbitrary fields of characteristic not $2$,
    Algebras and Representation Theory, 21 (2018), 1, 19--45.

\bibitem{hl}
Hentzel I.,  Labra A., 
On left nilalgebras of left nilindex four satisfying an identity of degree four,
International Journal of Algebra and Computation, 17 (2007),  1, 27--35.


\bibitem{peresi}
Hentzel I.,  Peresi L., 
Almost Jordan rings,
Proceedings of the American Mathematical Society, 104 (1988), 2, 343--348.




\bibitem{ikm19}
 Ismailov N.,  Kaygorodov I.,  Mashurov F.,
    The algebraic and geometric classification of nilpotent assosymmetric algebras,
    Algebras and Representation Theory,    24  (2021), 1, 135--148. 

\bibitem{jkk19}
Jumaniyozov D.,  Kaygorodov I.,   Khudoyberdiyev A., 
      The algebraic and geometric classification of nilpotent noncommutative Jordan  algebras,
      Journal of Algebra and its Applications, 2020,
      DOI: 10.1142/S0219498821502029

\bibitem{kkk18}
Karimjanov I., Kaygorodov I.,  Khudoyberdiyev A.,
    The algebraic and geometric classification of nilpotent Novikov algebras, 
    Journal of Geometry and Physics,   143 (2019), 11--21.

\bibitem{kkl18}
Karimjanov I., Kaygorodov I.,	Ladra M.,
    Central extensions of filiform associative algebras, 
   Linear and Multilinear Algebra,   69  (2021), 6,  1083--1101.


\bibitem{kk20}
Kaygorodov I., Khrypchenko M.,  
    The algebraic classification of nilpotent $\mathfrak{CD}$-algebras,
     Linear and Multilinear Algebra, 2020, DOI: 10.1080/03081087.2020.1856030 
 
 \bibitem{pilar} 
Kaygorodov I., Khrypchenko M., 
The geometric  classification of nilpotent $\mathfrak{CD}$-algebras, 
Journal of Algebra and its Applications, 2020, DOI: 10.1142/S021949882150198X
 
\bibitem{kkl19}
Kaygorodov I., Khrypchenko M., Lopes S.,  
    The algebraic and geometric classification of nilpotent anticommutative  algebras, 
    Journal of Pure and Applied Algebra,  224  (2020), 8, 106337. 
 

 
\bibitem{kkp19geo}
Kaygorodov I.,  Khrypchenko M.,  Popov Yu.,
    The algebraic and geometric classification of nilpotent terminal algebras,
     Journal of Pure and Applied Algebra,   225  (2021), 6, Paper:  106625.




\bibitem{kks19}
 Kaygorodov I., Khudoyberdiev  A., Sattarov A.,  
    One-generated nilpotent terminal  algebras,     
    Communications in Algebra, 48 (2020), 10, 4355-4390.

\bibitem{klp20}
Kaygorodov I., Lopes S., P\'{a}ez-Guill\'{a}n P.,  
    Non-associative central extensions of null-filiform associative algebras, Journal of Algebra, 560 (2020), 1190--1210.



\bibitem{kpv19}
Kaygorodov I., P\'{a}ez-Guill\'{a}n P.,  Voronin V., 
    The algebraic and geometric classification of nilpotent bicommutative algebras, 
    Algebras and Representation Theory, 23  (2020), 6, 2331--2347.





\bibitem{kps19}
 Kaygorodov I., Pozhidaev A., Saraiva P.,
 On a ternary generalization of Jordan algebras,
Linear and Multilinear Algebra,  67 (2019),  6, 1074--1102.


 
  
  
    
\bibitem{krs19}
Kaygorodov I.,  Rakhimov I.,  Said Husain Sh. K., 
    The algebraic  classification of nilpotent  associative commutative algebras,
    Journal of Algebra and its Applications,   19  (2020), 11,  2050220.
  

\bibitem{kv16}
Kaygorodov I.,   Volkov Yu.,
    The variety of $2$-dimensional algebras over an algebraically closed field,
    Canadian  Journal of Mathematics,  71 (2019),  4, 819--842.

\bibitem{kz}
Kaygorodov I., Zusmanovich P., 
    On anticommutative algebras for which $[R_a,R_b]$ is a derivation, 
    Journal of Geometry and Physics,    163 (2021), Paper: 104113.


\bibitem{osborn65}
Osborn J., 
    Commutative algebras satisfying an identity of degree four,
    Proceedings of the American Mathematical Society, 16 (1965), 1114--1120.



\bibitem{os65}
Osborn J., 
    Identities of non-associative algebras,
    Canadian Journal of Mathematics, 17 (1965), 78--92.


\bibitem{osborn69}
Osborn J.,   
    Lie triple algebras with one generator,
    Mathematische Zeitschrift,  110 (1969), 52--74.

 
\bibitem{petersson}
Petersson H.,
Zur Theorie der Lie-Tripel-Algebren (in German),
Mathematische Zeitschrift, 97 (1967), 1--15.

\bibitem{petersson67}
Petersson H., 
Über den Wedderburnschen Struktursatz für Lie-Tripel-Algebren (in German),
Mathematische Zeitschrift, 98 (1967), 104--118.

\bibitem{Sidorov_1981}
Sidorov A., Lie triple algebras, Algebra and Logic, 87 (1981), 72--78. 

\bibitem{ss78}
Skjelbred T., Sund T.,
    Sur la classification des algebres de Lie nilpotentes,
    C. R. Acad. Sci. Paris Ser. A-B, 286 (1978), 5,  A241--A242.


 

\bibitem{zusmanovich}
Zusmanovich P., 
    Central extensions of current algebras,
    Transactions of the American Mathematical Society, 334 (1992),  1, 143--152.
    
    






\end{thebibliography}
\end{document}